\title[Graded   contractions of the exceptional Lie algebras]{Graded   contractions of the $\mathbb{Z}_2^3$-gradings on the \\ exceptional Lie algebras coming from octonions}
\author[]{Francisco Cuenca, Cristina Draper$^{\star}$ and Thomas L. Meyer 
}
 \newcommand{\Addresses}{{
  \bigskip
  \footnotesize

  F.~Cuenca Carr\'egalo, \textsc{Universidad de M\'alaga (Spain)}\par\nopagebreak
  \textit{E-mail address}: \texttt{fccsjpq@uma.es}

  \medskip

  C.~Draper (Corresponding author), 
  \textsc{ Universidad de M\'alaga (Spain)}\par\nopagebreak
  \textit{E-mail address}: \texttt{cdf@uma.es}
  
   \medskip
  
   T.~L.~Meyer, \textsc{University of Cape Town (South Africa)}\par\nopagebreak
  \textit{E-mail address}: \texttt{tomleenenmeyer@gmail.com}

}}
\subjclass[2010]{Primary  
17B25; 
Secondary 
17B70; 
17B30. 
}
\keywords{Graded contractions, gradings, solvable and nilpotent algebras, Exceptional Lie algebra}
\newcommand{\mf}[1]{\mathfrak{#1}}
\newcommand{\f}[1]{\mathfrak{#1}}
\newcommand{\ep}{\varepsilon}
\newcommand{\weyl}{\mathcal{W}}
\newcommand{\comment}[1]{}
\newcommand{\mcal}[1]{\mathcal{#1}}
\DeclareMathOperator{\Aut}{Aut}
\DeclareMathOperator{\Stab}{Stab}
\DeclareMathOperator{\id}{id}
\theoremstyle{plain} 
\newtheorem{theorem}{Theorem}[section]
\newtheorem{lemma}[theorem]{Lemma}
\newtheorem{coro}[theorem]{Corollary}
\newtheorem{prop}[theorem]{Proposition}
\theoremstyle{definition} 
\newtheorem{defi}[theorem]{Definition}
\newtheorem{remark}[theorem]{Remark}
\newtheorem{example}[theorem]{Example}
\begin{document}

\maketitle

\begin{abstract}
A total of 860 nonisomorphic \( \mathbb{Z}_2^3 \)-graded Lie algebras of dimensions 52, 78, 133, and 248 are obtained as graded contractions of the \( \mathbb{Z}_2^3 \)-gradings on the exceptional Lie algebras (excluding \( \mathfrak{g}_2 \)) arising from the octonions.  
It is shown that all graded contractions of these gradings are necessarily generic. Their supports correspond to a combinatorial object known as a \emph{generalised nice set}, which serves as the main tool in the classification.  
The resulting algebras are distinguished by their Levi decompositions, the derived series of their radicals, their centers, and other structural features.
\end{abstract}

\section{Introduction}  

Gradings have been a key tool in the study of Lie algebras ever since Killing classified complex simple Lie algebras using the grading over the root system---the so-called Cartan grading, which is a grading by a free abelian group whose rank equals that of the Lie algebra. Gradings also appear naturally in applications of Lie algebras to various contexts, particularly in Geometry and Physics.
Patera and Zassenhaus initiated a systematic development of the theory of Lie algebra gradings in \cite{patera1998lie_gradings}, a program that culminated in the AMS monograph \cite{elduque2013gradings_simple_lie} by Elduque and Kochetov, devoted to gradings on simple Lie algebras. This work compiles contributions from many authors concerning classification results up to equivalence and up to isomorphism. An accessible survey on the classical case can be found in \cite{survey}, and another one on the exceptional case is available in \cite{mioE}. The latter emphasizes concrete descriptions of gradings, as some theoretical classifications of gradings on exceptional Lie algebras are still not completely known.

 Exceptional Lie algebras are particularly difficult to work with, and gradings often offer novel approaches for their study. Gradings on \( \mathfrak{g}_2 \) were independently classified in \cite{draper-martin2006linear,otroG2}. The unique non-toral \( \mathbb{Z}_2^3 \)-grading on \( \mathfrak{g}_2 \) made it possible to construct convenient bases adapted to explicit models of both the complex algebra \( \mathfrak{g}_2 \) and its compact real form, \cite{draper2024twistedg2}.
The remaining exceptional Lie algebras are even more challenging to handle. Fortunately, Tits provided a unified model for all of them in \cite{tits1966algebres}, which remains impressive to this day for its elegance and simplicity. This construction makes it possible to consider \( \mathbb{Z}_2^3 \)-gradings simultaneously on all complex exceptional Lie algebras other than \( \mathfrak{g}_2 \). These gradings share many symmetry properties, and our goal in this work is to investigate this family of exceptional Lie algebras through the lens of their \( \mathbb{Z}_2^3 \)-gradings, more specifically, by analyzing their graded contractions.

The study of contractions of Lie algebras began in physics  with the foundational work \cite{InonuWigner53}, where standard contractions were used to relate the symmetry algebras of different physical theories---for example, connecting relativistic and classical mechanics. While graded contractions are a different notion, they were later explored, partly inspired by these ideas, and gained interest among physicists as well (see \cite{patera1991discrete}). Their study provides new algebraic perspectives and can reveal structural relationships between Lie algebras beyond traditional contractions.
Contraction procedures provide useful relations between Lie algebras that can assist in their classification. 
Some general notions and invariants were introduced in \cite{Weimar-WoodsI}, which aimed to characterize \( G \)-graded contractions for arbitrary abelian groups, without reference to a particular Lie algebra. 
A recent contribution in this direction is \cite{kochetov2025generic}, which develops a general theory of generic graded contractions for arbitrary gradings, focusing on structural properties and families of contractions rather than on concrete classifications.
In contrast, a more specific approach is adopted in \cite{draper2024gradedg2}, where the Lie algebra under consideration is the particular case of \( \mathfrak{g}_2 \). Unlike most previous works following this \lq\lq specific\rq\rq approach---in which most homogeneous components have dimension one---the grading in \cite{draper2024gradedg2} features components all of dimension two.
Moreover, a complete classification of the algebras obtained by graded contraction was achieved in \cite{draper2024gradedg2}, notably without the use of a computer. The key to this achievement was the introduction of a combinatorial object, the so-called \emph{nice set}.
The nice sets turned out to be precisely the supports of all graded contractions of the \( \mathbb{Z}_2^3 \)-grading on \( \mathfrak{g}_2 \), enabling a complete classifications under different equivalence relations---yielding isomorphic and gr-isomorphic   Lie algebras. Furthermore, nice sets were instrumental in the classification of graded contractions of the complex orthogonal Lie algebras \( \mathfrak{so}(7) \) and \( \mathfrak{so}(8) \) in \cite{draper2024gradedb3d4}. 
The connection between these two studies illustrates that two Lie algebras graded by the same group can, in some cases, give rise to exactly the same graded contraction classifications---even though the resulting Lie algebras are, of course, not the same.
This is partly the case in the present work, but not entirely. Here, we study the graded contractions of the exceptional Lie algebras of dimension at least 52, all endowed with the aforementioned \( \mathbb{Z}_2^3 \)-grading arising from octonions. The resulting classification is entirely different from that of \( \mathfrak{g}_2 \), yet it can still be treated in a unified way. To this end, we introduced and classified in \cite{draper2024generalised} a new combinatorial object called the \emph{generalised nice set}, which serves as the key tool in our approach.  
All supports of generic graded contractions turn out to be generalised nice sets, and all graded contractions of the \( \mathbb{Z}_2^3 \)-gradings considered in this work are, in fact, generic. Remarkably, there exists a large number of such generalised nice sets---most of them corresponding to non-isomorphic Lie algebras.
  \medskip


The paper is structured as follows. In Section~\ref{se_prelim}, we recall some background on gradings and graded contractions of Lie algebras. Next, we explain how the notion of a nice set served as the key tool for the classification of the graded contractions of three 
$\mathbb{Z}_2^3$-gradings on the Lie algebras $\mathfrak{g}_2$, $\mathfrak{b}_3$, and $\mathfrak{d}_4$.
In Section~\ref{se_GNS}, we introduce a new combinatorial object that plays the role of the nice set in our context: the so-called generalised nice set. Its classification is briefly but clearly summarised in Theorem~\ref{teo_listaGNS}. It is worth noting that not every generalised nice set is a nice set, and the converse also fails. 
Section~\ref{sec_principal} deals with the $\mathbb{Z}_2^3$-gradings on the exceptional Lie algebras $\mathfrak{f}_4$, $\mathfrak{e}_6$, $\mathfrak{e}_7$, and $\mathfrak{e}_8$. These can be described simultaneously using Tits' unified construction of the exceptional Lie algebras, by varying the Jordan algebra involved in the construction.
The key difference between their graded contractions and those of \( \mathfrak{g}_2 \), \( \mathfrak{b}_3 \), and \( \mathfrak{d}_4 \) is addressed in Section~\ref{se_generica}, where it is shown that any graded contraction in our setting is necessarily generic and, in particular, has support given by a generalised nice set. 
The abundance of generalised nice sets, together with the elementary Proposition~\ref{prop_ep^T}, provides a large collection of Lie algebras that can be obtained as graded contractions of the four \( \mathbb{Z}_2^3 \)-gradings.
Section~\ref{se_propiedades} is devoted to studying the properties of these Lie algebras, which can, quite remarkably, be described with almost no reference to the underlying Hurwitz algebra.  
We observe that the algebras in Freudenthal's magic square appear precisely as the Levi factors of the algebras obtained by graded contraction.
Finally, in Theorem~\ref{teo_representantes}, we study how many non-isomorphic Lie algebras can be obtained in this way (up to graded isomorphism), arriving at 860 algebras---215 of each of the dimensions 52, 78, 133, and 248. As before, the problem of distinguishing these algebras is closely related to combinatorics.

Throughout this paper, we work over an algebraically closed field \( \mathbb{F} \) of characteristic zero.


\section{Preliminaries }\label{se_prelim}
	
\subsection{Gradings and graded contractions of Lie algebras}\label{se_gradings}

 Given an abelian group \( G \), a \( G \)-grading \( \Gamma \) on an algebra \( \mathcal{A} \) is a vector space decomposition \( \Gamma: \mathcal{A} = \bigoplus_{g\in G} \mathcal{A}_g \) such that \( \mathcal{A}_g \mathcal{A}_h \subseteq \mathcal{A}_{g + h} \) for all \( g, h \in G \). Each subspace \( \mathcal{A}_g \) is called a \emph{homogeneous component}, and \( g \) its \emph{degree}. A subspace \( W \subseteq \mathcal{A} \) is called a \emph{homogeneous subspace} if \( W = \bigoplus_{g \in G} (W \cap \mathcal{A}_g) \). 
 
 For example, if \( V = \bigoplus_{g \in G} V_g \) is a vector space decomposition, then the general linear Lie algebra \( \mathfrak{gl}(V) = \bigoplus_{g \in G} \mathfrak{gl}(V)_g \) is \( G \)-graded, where the homogeneous components are given by \( \mathfrak{gl}(V)_g = \{ f \in \mathfrak{gl}(V) \mid f(V_h) \subseteq V_{g+h} \ \forall h \in G \} \). Moreover, if we start with a \( G \)-graded algebra \( \mathcal{A} = \bigoplus_{g \in G} \mathcal{A}_g \), then the derivation algebra \( \mathfrak{der}(\mathcal{A}) = \{ d \in \mathfrak{gl}(\mathcal{A}) \mid d(xy) = d(x)y + x d(y) \ \forall x,y \in \mathcal{A} \} \), which is a Lie subalgebra of \( \mathfrak{gl}(\mathcal{A}) \), is also a homogeneous subspace; in particular, \( \mathfrak{der}(\mathcal{A}) \) is \( G \)-graded as well. We will apply this to different algebras in Section~\ref{se_TitsConstruction}, such as composition algebras and Jordan algebras.

We can consider several groups associated with a grading \( \Gamma \) on an algebra \( \mathcal{A} \). First, the automorphism group of the grading:
\[
\Aut(\Gamma) = \left\{ f \in \Aut(\mathcal{A}) \mid \text{for each } g \in G \text{ there exists } g' \in G \text{ such that } f(\mathcal{A}_g) \subseteq \mathcal{A}_{g'} \right\};
\]
second, the stabilizer of the grading:
\[
\Stab(\Gamma) = \left\{ f \in \Aut(\mathcal{A}) \mid f(\mathcal{A}_g) \subseteq \mathcal{A}_g,\ \forall g \in G \right\};
\]
and finally, the Weyl group of the grading \( \Gamma \),
\begin{equation} \label{eq_Weyl}
\weyl(\Gamma) = \Aut(\Gamma) / \Stab(\Gamma).
\end{equation}
For instance, if \( \mathcal{A} \) is a Lie algebra \( \mathcal{L} \), and \( \Gamma \) is the root space decomposition relative to a Cartan subalgebra of \( \mathcal{L} \), then the Weyl group of the grading coincides with the classical Weyl group generated by reflections with respect to a set of simple roots, possibly extended by automorphisms of the Dynkin diagram. In general, a larger Weyl group reflects a higher degree of symmetry.

  Now assume that \( \Gamma \) is a \( G \)-grading on a complex Lie algebra \( \mathcal{L} \).
A \emph{graded contraction} of \( \Gamma \) is a map \( \varepsilon \colon G \times G \to \mathbb{F} \) such that the vector space \( \mathcal{L} \), endowed with the bracket \( [x, y]^\varepsilon := \varepsilon(g, h)[x, y] \) for \( x \in \mathcal{L}_g \), \( y \in \mathcal{L}_h \), \( g,h \in G \), is again a Lie algebra. In this case, we denote it by \( \mathcal{L}^\varepsilon := (\mathcal{L}, [\cdot, \cdot]^\varepsilon) \).
Note that \( \mathcal{L}^\varepsilon \) inherits a natural \( G \)-grading given by \( (\mathcal{L}^\varepsilon)_g = \mathcal{L}_g \). 
We say that two graded contractions \( \varepsilon \) and \( \varepsilon' \) are \emph{equivalent}, and write \( \varepsilon \sim \varepsilon' \), if the corresponding graded Lie algebras are isomorphic as graded algebras. That is, there exists a Lie algebra isomorphism \( f \colon \mathcal{L}^\varepsilon \to \mathcal{L}^{\varepsilon'} \) such that for every \( g \in G \), there exists \( h \in G \) with \( f(\mathcal{L}_g) = \mathcal{L}_h \).
\smallskip

 It is easy to check that an arbitrary map \( \varepsilon \colon G \times G \to \mathbb{F} \) is a graded contraction of a \( G \)-grading \( \Gamma \) (see, for instance,  \cite[Remark~2.4]{draper2024gradedg2}) if and only if the following conditions hold:
\begin{enumerate}
\item[(a1)] \( \big(\varepsilon(g, h) - \varepsilon(h, g)\big)[x, y] = 0, \)
\item[(a2)] \( \big(\varepsilon(g, h, k) - \varepsilon(k, g, h)\big)[x, [y, z]] 
+ \big(\varepsilon(h, k, g) - \varepsilon(k, g, h)\big)[y, [z, x]] = 0, \)
\end{enumerate}
for all \( g, h, k \in G \) and any choice of homogeneous elements \( x \in \mathcal{L}_g \), \( y \in \mathcal{L}_h \), \( z \in \mathcal{L}_k \). Here, \( \varepsilon \colon G \times G \times G \to \mathbb{F} \) denotes the ternary map defined by
\(
\varepsilon(g, h, k) := \varepsilon(g, h + k)\varepsilon(h, k).
\)

In general, the conditions (a1) and (a2) depend heavily on the specific $G$-grading \( \Gamma \) on the Lie algebra \( \mathcal{L} \). However, there are conditions that guarantee that a given map is always a graded contraction, regardless of the $G$-grading.
Specifically, if the map \( \varepsilon \colon G \times G \to \mathbb{F} \) satisfies the following conditions:
\begin{enumerate}
\item[(c1)] \( \varepsilon(g, h) = \varepsilon(h, g) \),
\item[(c2)] \( \varepsilon(g, h, k) = \varepsilon(k, g, h) \),
\end{enumerate}
for all \( g, h, k \in G \), we say that \( \varepsilon \) is a \emph{generic graded contraction}.  
It is clear that any generic graded contraction defines a valid graded contraction of any \( G \)-grading \( \Gamma \) on any Lie algebra \( \mathcal{L} \).
Remarkably, the converse is also true: according to \cite[Proposition~5]{kochetov2025generic}, any map \( \varepsilon \colon G \times G \to \mathbb{F} \) that defines a graded contraction for every \( G \)-grading \( \Gamma \) on any Lie algebra \( \mathcal{L} \) must be generic (hence the name).

One way to construct generic graded contractions for a group \( G \) is by using 2-co\-boun\-da\-ries (for cohomology with trivial action). 
More precisely, 
if \( \alpha \colon G \to \mathbb{F}^\times \), then the map defined by
\[
\delta\alpha(g,h) = \frac{\alpha(g)\alpha(h)}{\alpha(g+h)}
\]
is a generic graded contraction; that is, \( \varepsilon = \delta\alpha \) satisfies conditions (c1) and (c2).

These 2-coboundaries are also useful in the context of any fixed grading \( \Gamma \), not only because they define graded contractions themselves, but also because of how they act on arbitrary ones. Specifically, given any graded contraction \( \varepsilon \) of \( \Gamma \), the product \( (\delta\alpha)\varepsilon \) is again a graded contraction.
Moreover, the corresponding graded Lie algebra is isomorphic to \( \mathcal{L}^\varepsilon \) as a \( G \)-graded algebra, via the map  
\( f \colon \mathcal{L}^{(\delta\alpha)\varepsilon} \to \mathcal{L}^\varepsilon \) defined by  
\( f(x) = \alpha(g) x \) for every \( g \in G \) and \( x \in \mathcal{L}_g \).
In \cite{draper2024gradedg2} (among others), the contractions \( \varepsilon \) and \( (\delta\alpha)\varepsilon \) are said to be \emph{equivalent by normalization}.

A first tool for the classification of graded contractions is the concept of the \emph{support} of a graded contraction. For any map \( \varepsilon \colon G \times G \to \mathbb{F} \), we define its support as the set
\begin{equation*} 
    S^\varepsilon = \{ (g,h) \in G \times G : \varepsilon(g,h) \ne 0 \}.
\end{equation*}
For example, if \( \varepsilon = \delta\alpha \) is a 2-coboundary, then the support is the entire \( G \times G \). 
Graded contractions with smaller supports yield contracted  Lie algebras that are more abelian.

\begin{remark}
A priori, supports consist of ordered pairs. However, when \( \varepsilon \) is symmetric---that is, condition (c1) holds---we have, in particular, that \( \varepsilon(g,h) \ne 0 \) if and only if \( \varepsilon(h,g) \ne 0 \), so it suffices to work with unordered pairs, which we denote by \( \{g,h\} \).
This will be the situation throughout the present work, and we prefer to consider
\begin{equation}\label{eq_defsupport} 
    S^\varepsilon := \{ \{g,h\}   : g,h\in G,\varepsilon(g,h) \ne 0 \}.
\end{equation}
\end{remark}

In this work, we are interested in obtaining graded contractions of \( \mathbb{Z}_2^3 \)-gradings of the four exceptional Lie algebras other than \( \mathfrak{g}_2 \).  
At first glance, this does not seem like a task that can be approached simultaneously for all cases, since---as already mentioned---the conditions (a1) and (a2) depend heavily on the particular grading of \( \mathcal{L} \).  
However, there is precedent for this approach: in \cite{draper2024gradedb3d4}, results originally developed for \( \mathfrak{g}_2 \) in \cite{draper2024gradedg2} were successfully extended to the cases of \( \mathfrak{b}_3 \) and \( \mathfrak{d}_4 \).
Each graded contraction arising from any of these three \( \mathbb{Z}_2^3 \)-gradings was shown to be equivalent to an \emph{admissible graded contraction} \( \varepsilon \), satisfying \( \varepsilon(g, g) = \varepsilon(g, e) = \varepsilon(e, g) = 0 \). (This condition stems from the fact that \( \mathcal{L}_e = 0 \).) This notion led to the definition of the \emph{nice set} (see Section~\ref{se_nice}), a combinatorial object that precisely characterizes the possible supports of admissible graded contractions.
Despite the specific nature of the arguments used in those cases, we were able to apply them to a variety of \( \mathbb{Z}_2^3 \)-gradings. However, this approach will no longer be valid for the \( \mathbb{Z}_2^3 \)-gradings considered in the present article, for which only a small subset of graded contractions are equivalent to admissible ones.
Instead, we will show that all graded contractions of our four \( \mathbb{Z}_2^3 \)-gradings are generic, and we will associate to each of them a new combinatorial object (introduced in \cite{draper2024generalised}) that precisely describes its support.


 \subsection{Nice sets, \( \mathfrak{g}_2 \), \( \mathfrak{b}_3 \), and \( \mathfrak{d}_4 \)}\label{se_nice}

In this work, we focus on \( \mathbb{Z}_2^3 \)-gradings on exceptional Lie algebras,   so we will always take \( G = \mathbb{Z}_2^3 \). Furthermore, we label the elements of \( G \) as follows:  
\begin{equation}\label{eq_labelsofZ23}
  \begin{array}{cccc}
g_0 := (0, 0, 0), & \quad g_1 := (1, 0, 0), & \quad g_2 := (0, 1, 0), & \quad g_3 := (0, 0, 1), \\
g_4 := (1, 1, 1), & \quad g_5 := (1, 1, 0), & \quad g_6 := (1, 0, 1), & \quad g_7 := (0, 1, 1).
\end{array}  
\end{equation}
The group operation in \( G \) induces a binary operation \( * \) on the index set 
  \(  I_0 := \{0, \dots, 7\} \).  
To follow the notation in \cite{draper2024gradedg2}, we will also use \( I := \{1, \dots, 7\} \), so that \( I_0 = I \cup \{0\} \).   
For \( i, j \in I_0 \), we define \( i * j \) to be the unique element in \( I_0 \) such that \( g_i + g_j = g_{i * j} \).
A bijection \( \sigma \colon I_0 \to I_0 \) is called a \emph{collineation} if  
\( \sigma(i * j) = \sigma(i) * \sigma(j) \) for all \( i, j \in I_0 \).  
In particular, \( \sigma(0) = 0 \).  
The term reflects the identification of these bijections with the collineations of the (unoriented) Fano plane \( P\mathbb{Z}_2^3 \) (see Figure~\ref{fig:Fano}).  
The collineation group, denoted here by \( S_*(I) \), is a well-known simple group of order 168, isomorphic to the group of automorphisms of \( G \).

We say that \( \{i, j, k\} \subseteq I \) is a \emph{generating triplet} if \( \langle g_i, g_j, g_k \rangle = G \), or equivalently, if the three distinct indices satisfy \( k \ne i * j \). The group \( S_*(I) \) acts transitively on the set of ordered generating triplets: for any two such triplets \( \{i, j, k\} \) and \( \{i', j', k'\} \), there exists a unique \( \sigma \in S_*(I) \) such that \( \sigma(i) = i' \), \( \sigma(j) = j' \), and \( \sigma(k) = k' \). In particular, \( S_*(I) \) is in bijective correspondence with the set of ordered generating triplets.
 
The relevance of this group lies in its coincidence with the Weyl group \( \mathcal{W}(\Gamma_{\mathcal{L}}) \) defined in \eqref{eq_Weyl}, for any of the \( \mathbb{Z}_2^3 \)-gradings on \( \mathcal{L} \in \{ \mathfrak{g}_2, \mathfrak{b}_3, \mathfrak{d}_4 \} \) studied in \cite{draper2024gradedg2,draper2024gradedb3d4}; all three share highly symmetric properties.
There is a close relationship between equivalent graded contractions of 
\(    \Gamma_{\mathfrak{g}_2}, \Gamma_{\mathfrak{b}_3}, \Gamma_{\mathfrak{d}_4}   \)
 and collinear supports. More precisely, any support of an admissible graded contraction 
  ---necessarily symmetric---     
   induces a subset of  
\( X := \{ \{i, j\} : i \ne j,\ i, j \in I \} \) by mapping each pair \( (g_i, g_j) \in S^\varepsilon \) to the unordered pair \( \{i, j\} \in X \).
 
 A subset \( T \) of \( X \) is called \emph{nice} \cite[Definition 3.9]{draper2024gradedg2} if, for any generating triplet \( \{i, j, k\} \subseteq I \), the inclusion of \( \{i, j\}, \{k, i * j\} \in T \) implies that
\begin{equation}\label{eq_P}
    P_{\{i,j,k\}} := \{\{i, j\}, \{j,k\},  \{k,i\}, \{i, j * k\}, \{j, k * i\}, \{k, i * j\}\}
\end{equation}
is contained in \( T \). It is easy to prove that the support of any admissible graded contraction of \( \Gamma \) is a nice set \cite[Proposition~3.10]{draper2024gradedg2}.

Conversely, given a nice set \( T \), we define \( \varepsilon^T(g_i, g_j) = 1 \) if \( \{i, j\} \in T \), and \( \varepsilon^T(g_i, g_j) = 0 \) otherwise.  
According to \cite[Proposition~3.11]{draper2024gradedg2}, this defines an admissible graded contraction for each of the three \( \Gamma_{\mathcal{L}} \)'s previously considered, with support equal to \( T \).
There are exactly 24 nice sets up to collineation, according to \cite[Theorem~3.17]{draper2024gradedg2}. The correspondence \( T \mapsto \varepsilon^T \) relates 23 of these nice sets to 23 non-equivalent graded contractions of \( \Gamma_{\mathfrak{g}_2} \), as shown in \cite[Proposition~4.11]{draper2024gradedg2}. For the orthogonal Lie algebras \( \mathfrak{so}(7) \) and \( \mathfrak{so}(8) \), the results are completely analogous.
 However, this will no longer be the case for the \( \mathbb{Z}_2^3 \)-gradings considered throughout this paper: first, the supports will not necessarily be contained in \( X \); and second, not every nice set will correspond to the support of a graded contraction. 
Remarkably, in this setting, the supports of graded contractions coincide exactly with a new combinatorial object introduced in \cite{draper2024generalised}, namely, the \emph{generalised nice set}. 
(This will be established in Section~\ref{se_generica}.)

 \subsection{Generalised nice sets.}\label{se_GNS}
 
 Consider again the sets \( P_{\{i,j,k\}} \) as in Eq.~\eqref{eq_P}, but now without requiring \( \{i, j, k\} \) to be a generating triplet. In fact, the indices \( i, j, k \in I_0 = I \cup \{0\} \) may even be repeated. Note that the resulting sets \( P_{\{i,j,k\}} \) can contain fewer than six elements; for instance, \( P_{\{0,0,0\}} = \{\{0,0\}\} \) contains only one element.

\begin{defi}
Let \( X_0 := \{ \{i, j\} : i, j \in I_0 \} \).
A subset \( T \subseteq X_0 \) is called a \emph{generalised nice set} if, for any \( i, j, k \in I_0 \), the condition \( \{i, j\},\, \{i * j, k\} \in T \) implies that \( P_{\{i,j,k\}} \subseteq T \).
\end{defi}

As mentioned, we will soon see the role that these new combinatorial sets play in our study of certain graded contractions. Clearly, the action of any \( \sigma \in S_*(I) \) on a generalised nice set \( T \) produces another generalised nice set, given by
\[
\tilde\sigma(T) := \{ \{\sigma(i), \sigma(j)\} : \{i, j\} \in T \}.
\]
The classification up to collineations is much richer than in the case of nice sets, yielding exactly 245 non-collinear generalised nice sets \cite[\S~Conclusion]{draper2024generalised}. 

For convenience of notation, we will usually write \( ij \) instead of the unordered pair \( \{i, j\} \) in \( X_0 \).

\begin{example}\label{ex_nice=GNS_losSi}
Some nice sets in \( X \) are also generalised nice sets. Specifically, we have \( S_0 = \emptyset \), and:
\[
\begin{array}{cccc}
S_1 = \{12\}, & S_4 = \{12,13,14\}, & S_7 = \{12,16,67\}, & S_{10} = \{12,16,27,67\}, \\
S_2 = \{12,13\}, & S_5 = \{12,13,17\}, & S_8 = \{25,36,47\}, & S_{11} = \{12,16,17,26,27\}, \\
S_3 = \{12,67\}, & S_6 = \{12,16,26\}, & S_9 = \{12,16,17,26\}, & S_{12} = \{34,36,37,46,47,67\},
\end{array}
\]
and \( S_{13} = P_{\{1,2,3\}} = \{12,23,31,17,26,35\} \).
\end{example}

 \begin{example}\label{ex_outofX_EFyP}
There are some generalised nice sets entirely contained in \( X_0 \setminus X \), namely:
\begin{itemize}
    \item \( E_J := \{jj : j \in J\} \);
    \item \( F_J := \{00, 0j : j \in J\} \);
    \item \( P_{\{0, j, j\}} = \{00, 0j, jj\} \);
\end{itemize}
for any subset \( J \subseteq I \) and any \( j \in I \). (Many of these sets are collinear in the sense of being in the same collineation class.)
\end{example}

 \begin{example}\label{ex_las7excepciones}
There are exactly seven generalised nice sets (up to collineation) \( T \subseteq X_0 \) such that \( T \cap X \) is not itself a generalised nice set. These are:
\begin{itemize}
    \item the full set \( X_0 \);
    \item \( Y_7 := \{00, 01, 02, 05, 11, 12, 15\} \subset Y_{10} := \{00, 01, 02, 05, 11, 12, 15, 22, 25, 55\} \);
    \item \( Y_7 \subset Y_{11} := \{00, 0i, 1i : i = 1, 2, 3, 5, 6\} \subset Y_{15} := \{00, 0i, 1i : i \in I\} \subset Y_{19} := Y_{15} \cup \{23, 35, 26, 56\} \);
    \item \( Y_{26} := X_0 \setminus \{kl : k, l \in \{3, 4, 6, 7\}\} \).
\end{itemize}
The indices indicate the cardinality of each set.
\end{example}

In \cite[Theorem~6.3]{draper2024generalised}, it is proved that any other generalised nice set must be of the form 
\( S_i \cup E_J \), \( S_i \cup F_J \), or \( S_i \cup P_{\{0,j,j\}} \), with certain restrictions on \( J \subseteq I \) and \( j \in I \). A detailed analysis leads to exactly 245 generalised nice sets up to collineations.
We provide a concise and readable summary of this classification in Theorem~\ref{teo_listaGNS}, which compiles the information originally spread across multiple tables in \cite{draper2024generalised}.

\begin{theorem}\label{teo_listaGNS}
There are exactly 245 generalised nice sets up to collineation:
     \begin{enumerate} 
         \item $Y_{7}$, $Y_{10}$, $Y_{11}$, $Y_{15}$, $Y_{19}$, $Y_{26}$,   and $X_0$;
         \item $  P_{\{0,1,1\}}$, $S_1\cup P_{\{0,3,3\}}$, $S_2\cup P_{\{0,4,4\}}$,
         $S_2\cup P_{\{0,7,7\}}$, $S_3\cup P_{\{0,3,3\}}$, $S_6\cup P_{\{0,7,7\}}$,
         $S_7\cup P_{\{0,4,4\}}$, and $S_{10}\cup P_{\{0,4,4\}}$;
      
         \item $E_J$ and $F_J$ for $J=\emptyset,1,12,123,125,1234,1235,12345,123456,I$;
         \item $S_i\cup E_J$ for
         \begin{itemize}  
             \item[-] $i=1$, $J=\emptyset,1,3,12,13,34,36,123,134,346,136,1234,3467,1236,1367,12346, \\{13467},123467 $;
             \item[-] $i=2 $, $J=\emptyset, 1,2,4,7,12,14,17,23,24,27,47,123,124,127,234,247,147,237,1234,\\1237,1247,2347,12347$;
              \item[-] $i=3 $, $J=\emptyset, 1,3,12,13,16,34,123,126,134,137,136,1234,1267,1236,1346,12346,\\12367,123467 $;
               \item[-] $i= 4$, $J=\emptyset, 1,2,12,23,123,234,1234$;
                \item[-] $i=5 $, $J=\emptyset, 1,2,12,23,123,237,1237$;
                 \item[-] $i=6 $, $J=\emptyset,1,7,12,17,126,127,1267 $;
                  \item[-] $i= 7$, $J=\emptyset,1,2,4,12,14,16,17,24,27,124,126,127,146,247,147,1267,1246, {1247},\\12467 $;
                   \item[-] $i= 8$, $J=\emptyset, 2,23,25,234,235,237,2356,2345,23456,234567$;
                    \item[-] $i= 9$, $J=\emptyset,1,2,7,12,17,26,27,126,127,267,1267 $;
                     \item[-] $i=10 $, $J=\emptyset,1,4,12,14,17,124,126,147,1267,1246,12467 $;
                      \item[-] $i=11 $, $J=\emptyset,1,6,12,16,67,126,167,1267 $;
                       \item[-] $i=12 $, $J=\emptyset, 3,34,346,3467$;
                        \item[-] $i= 13$, $J=\emptyset, 1,12,123$.
         \end{itemize}
         \item $S_i\cup F_J$ for
         \begin{itemize} 
     
          \item[-] $i=1$, $J=\emptyset, 3,34,37,125,347,3467,1235,12567,12356,123456,I$;
           \item[-] $i=2$, $J= \emptyset,4,7,47,12356,123456,123567,I$;
            \item[-] $i=3$, $J=\emptyset, 3,34,12567,123567,I$;
             \item[-] $i=4,5,8,9,11,12,13$, $J=\emptyset, I$;
              \item[-] $i=6$, $J= \emptyset,7,123456,I$;
              \item[-] $i=7,10$, $J= \emptyset,4,123567,I$.
         \end{itemize}
            \end{enumerate}
 \end{theorem}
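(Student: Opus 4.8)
The plan is to reduce the enumeration to the structural theorem cited as \cite[Theorem~6.3]{draper2024generalised}, which already tells us that every generalised nice set is, up to collineation, one of the seven exceptional sets listed in Example~\ref{ex_las7excepciones}, or else has the shape $S_i \cup E_J$, $S_i \cup F_J$, or $S_i \cup P_{\{0,j,j\}}$ with $S_i$ ranging over the thirteen proper nice sets $S_0,\dots,S_{13}$ of Example~\ref{ex_nice=GNS_losSi} (and the degenerate case $S_0 = \emptyset$ giving the pure $E_J$, $F_J$, $P_{\{0,j,j\}}$ families of Examples~\ref{ex_outofX_EFyP}). So the content of Theorem~\ref{teo_listaGNS} is really a \emph{counting} statement: for each admissible base $S_i$, determine exactly which $J \subseteq I$ and which $j \in I$ make $S_i \cup E_J$, $S_i \cup F_J$, $S_i \cup P_{\{0,j,j\}}$ a generalised nice set, and then pick one representative per collineation orbit. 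First I would fix notation: recall that the collineation group is $S_*(I)$, the order-$168$ automorphism group of $\mathbb Z_2^3$, and that its action on $X_0$ restricts to an action on each of the three strata $X = \{ij : i\neq j,\ i,j\in I\}$, $\{0j : j\in I\}$, and $\{jj : j\in I\}$; the stabiliser in $S_*(I)$ of each $S_i$ is the tool that governs the collineation classes of the $J$'s that can be appended.

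The core of the argument is a case analysis over $i \in \{0,1,\dots,13\}$. For each $i$ I would: (1) compute the closure conditions imposed by the defining implication ``$\{a,b\},\{a*b,c\}\in T \Rightarrow P_{\{a,b,c\}} \subseteq T$'' when $T = S_i \cup E_J$ (resp.\ $S_i \cup F_J$, $S_i \cup P_{\{0,j,j\}}$), splitting into the subcases where the two triggering pairs come entirely from $S_i$, entirely from the appended part, or one from each; (2) observe that the appended elements $jj$, $0j$, $00$ only combine with $S_i$ and among themselves in a controlled way — for instance $\{j,j\}$ together with $\{0,k\}$ forces $P_{\{j,j,k\}} = \{jj, jk, 0j, \dots\}$, so appending a ``diagonal'' element $jj$ while having $0k\in T$ may drag in off-diagonal elements $jk$ and violate the nice-set minimality of $S_i$ unless $j=k$ or $jk$ is already in $S_i$; (3) translate these obstructions into a combinatorial constraint on $J$ — typically that $J$ must be a subset closed under the $*$-operation relative to the pairs already in $S_i$, or that $J$ avoids indices appearing in certain pairs of $S_i$. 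This produces, for each $i$, an explicit lattice of admissible $J$'s, and then I would quotient by $\mathrm{Stab}_{S_*(I)}(S_i)$ acting on subsets of $I$ to get the representatives that appear in items (3)--(5) of the theorem statement. The seven sporadic sets of item (1) and the eight mixed $P_{\{0,j,j\}}$ sets of item (2) are handled separately, essentially by direct verification, since they are few and already named.

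The main obstacle I expect is bookkeeping: there are $14$ base sets, three append-types each, and within each the closure analysis branches according to which triggering pairs are available; moreover the collineation reduction requires knowing $\mathrm{Stab}_{S_*(I)}(S_i)$ for every $i$, which varies a lot (it can be as large as order $24$ for the symmetric $S_{13} = P_{\{1,2,3\}}$ and as small as a few elements for the less symmetric $S_i$). The delicate point is \emph{exactness} of the count $245$: it is easy to overcount by missing a collineation identifying two $J$-lists, or to undercount by forgetting that some $S_i\cup E_J$ is collinear to some $S_{i'}\cup F_{J'}$. To control this I would rely on the stratification by $|T\cap X|$, $|T\cap\{0j\}|$, $|T\cap\{jj\}|$ and $00\in T$ as a collineation invariant (these cardinalities are preserved because $S_*(I)$ fixes $0$ and permutes $I$), which separates most of the families at a glance and leaves only a handful of genuine cross-family comparisons to check by hand. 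Since \cite[Theorem~6.3]{draper2024generalised} already supplies the structural classification and \cite[\S~Conclusion]{draper2024generalised} already records the number $245$, the proof here is principally one of \emph{organisation and transcription}: verifying that the consolidated list in items (1)--(5) is a faithful, non-redundant repackaging of the tables of \cite{draper2024generalised}, for which the cardinality invariants above and a one-line check per family that no two listed sets are collinear will suffice.
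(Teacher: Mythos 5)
Your proposal matches the paper's treatment: the paper gives no independent proof of this theorem, but presents it explicitly as a compilation of the classification already established in \cite[Theorem~6.3]{draper2024generalised} (where the structural trichotomy $S_i\cup E_J$, $S_i\cup F_J$, $S_i\cup P_{\{0,j,j\}}$ plus the seven exceptional sets is proved and the count $245$ is obtained), which is exactly the reduction you propose. The only caution is that your closing claim that the cardinality invariants plus ``a one-line check per family'' suffice is optimistic --- distinguishing, say, the various $S_i\cup E_J$ with $|J|$ fixed genuinely requires the stabiliser analysis you sketch earlier, not just the strata cardinalities --- but since that analysis is carried out in the cited reference rather than in this paper, your overall strategy is the same as the authors'.
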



\section{Graded contractions of exceptional Lie algebras.}  \label{sec_principal}

\subsection{Tits construction and \( \mathbb{Z}_2^3 \)-gradings on the exceptional Lie algebras}\label{se_TitsConstruction}

Here we review the simultaneous construction of all the exceptional simple Lie algebras, a beautiful and striking model proposed by Tits \cite{tits1966algebres}. The material presented below is mostly drawn from \cite[Chapter IV]{schafer2017intro_to_nonassoc}, adapted to our context.

Let \( \mathcal{O} \) denote the complex octonion algebra, spanned over \( \mathbb{F} \) by the basis \( \{e_0, e_1, \dots, e_7\} \), where the multiplication is defined by setting \( e_0 = 1 \) as the unit, \( e_i^2 = -1 \), and \( e_i e_j = \pm e_{i * j} \), with the sign determined by the orientation of the arrows in Figure~\ref{fig:Fano}. For instance, \( e_2 e_5 = e_1 = -e_5 e_2 \).

\begin{figure}[h]
  \centering
  \begin{tikzpicture}[scale=0.7, every node/.style={transform shape}] \label{fano}
\draw [postaction={decoration={markings, mark= at position 0.75 with {\arrowreversed{stealth}}}, decorate}] 
(30:1) -- (210:2);
\draw [postaction={decoration={markings, mark= at position 0.75 with {\arrowreversed{stealth}}}, decorate}]
(150:1) -- (330:2);
\draw [postaction={decoration={markings, mark= at position 0.75 with {\arrowreversed{stealth}}}, decorate}]
(270:1) -- (90:2);
\draw [postaction={decoration={markings, mark= at position .24 with {\arrowreversed{stealth}}}, decoration={mark= at position .57 with {\arrowreversed{stealth}}},decoration={mark= at position .9 with {\arrowreversed{stealth}}}, decorate}]
(90:2)  -- (210:2) -- (330:2) -- cycle;
\draw [postaction={decoration={markings, mark= at position .3 with {\arrowreversed{stealth}}}, decoration={mark= at position .63 with {\arrowreversed{stealth}}},decoration={mark= at position .96 with {\arrowreversed{stealth}}}, decorate}] (0:0)  circle (1);
\draw 
(30:1) node[circle, draw, fill=white]{{$e_2$}} 
(210:2) node[circle, draw, fill=white]{{$e_7$}} 
(150:1) node[circle, draw, fill=white]{{$e_4$}}
(330:2) node[circle, draw, fill=white]{{$e_5$}} 
(270:1) node[circle, draw, fill=white]{{$e_6$}} 
(90:2) node[circle, draw, fill=white]{{$e_1$}} 
(0:0) node[circle, draw, fill=white]{{$e_3$}}; 
\end{tikzpicture}
\vspace{-1\baselineskip}
  \caption{Fano plane}
  \label{fig:Fano}
\end{figure}
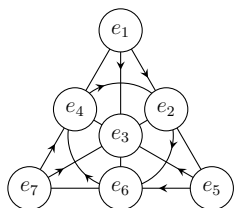

Consider also its quaternion subalgebra $\mathcal{H} := \langle \{e_0, e_1, e_2, e_5\} \rangle$, its two-dimensional subalgebra $\mathcal{K} := \langle \{e_0, e_1\} \rangle$, and its one-dimensional subalgebra $\mathcal{F} := \langle \{e_0\} \rangle$, which can be naturally identified with the base field $\mathbb{F}$. 
We will refer to any ${\mathcal C} \in \{\mathcal{F}, \mathcal{K}, \mathcal{H}, \mathcal{O} \}$ as a \emph{Hurwitz algebra}. All four share some key properties. Let us denote by $I_\mathcal{C} = \{i \in I_0 : e_i \in \mathcal{C} \}$, that is, 
\(
I_{\mathcal{F}} = \{0\} \subseteq 
I_{\mathcal{K}} = \{0, 1\} \subseteq 
I_{\mathcal{H}} = \{0, 1, 2, 5\} \subseteq 
I_{\mathcal{O}} = I_0.
\)
First, ${\mathcal C}$ is endowed with a nonsingular quadratic form (the \emph{norm}) $n\colon {\mathcal C} \to {\mathbb{F}}$, defined by  
\(
n\left(\sum_{i \in I_\mathcal{C}} \alpha_i e_i\right) = \sum_{i \in I_\mathcal{C}} \alpha_i^2,
\)
which is multiplicative; that is, $n(ab) = n(a)n(b)$ for all $a, b \in \mathcal{C}$. In other words, the norm $n$ admits composition, so the algebras are called \emph{composition algebras}.
Moreover, they are \emph{alternative algebras}, meaning that the associator alternates: $(a,a,b) = 0 = (a,b,a)$ for all $a,b \in \mathcal{C}$. Recall that the associator 
\(
(a,b,c) := (ab)c - a(bc)
\)
measures the failure of associativity in the algebra, so that it vanishes  identically for $\mathcal{C} \ne \mathcal{O}$.
 In contrast, the octonions \( \mathcal{O} \) are not associative, as the associator \( (e_i, e_j, e_k) \) is nonzero whenever \( \{i,j,k\} \) forms a generating triplet. 
The map $\text{-} \colon {\mathcal C} \to {\mathcal C}$ given by
\[
a = \alpha_0 e_0 + \sum_{i=1}^{l-1} \alpha_i e_i \mapsto \bar{a} = \alpha_0 e_0 - \sum_{i=1}^{l-1} \alpha_i e_i
\]
is an involution (i.e., a linear map satisfying $\bar{\bar{a}} = a$ and $\overline{ab} = \bar{b}\,\bar{a}$) such that \( a \bar{a} = n(a)1 \).
Note that this involution should not be confused with the usual complex conjugation in the field \( \mathbb{F} \), since here \( \bar{a} = a \) for all \( a \in \mathcal{F} \); that is, the involution restricts to the identity on \( \mathcal{F} \).
The map \( t_{\mathcal C} \colon \mathcal C \to \mathbb{F} \), defined by \( t_{\mathcal C}(a)1 = a + \bar{a} \in \mathcal{F} \), is linear, and every element \( a \in \mathcal{C} \) satisfies the quadratic equation
\(
a^2 - t_{\mathcal C}(a)a + n(a)1 = 0.
\)
In particular, \( t_{\mathcal C}(1) = 2 \) and \( n(1) = 1 \). Let \( \mathcal{C}_0 := \{ a \in \mathcal{C} \colon t_{\mathcal C}(a) = 0 \} \) denote the subspace of traceless elements. Note that \( [a, b] := ab - ba \in \mathcal{C}_0 \) for all \( a, b \in \mathcal{C} \), since \( t_{\mathcal C}(ab) = t_{\mathcal C}(ba) \).
Moreover, $[ \mathcal{C}_0 , \mathcal{C}_0 ]= \mathcal{C}_0$.\smallskip

The derivation algebra \( \mathfrak{der}(\mathcal{C}) := \{ d \colon \mathcal{C} \to \mathcal{C} \colon d(ab) = d(a)b + a d(b) \ \forall\, a,b \in \mathcal{C} \} \) is a Lie subalgebra of \( \mathfrak{gl}(\mathcal{C}) \), as mentioned in Section~\ref{se_gradings}. It is well known that \( \dim \mathfrak{der}(\mathcal{C}) = 0, 0, 3, 14 \), respectively. Moreover, \( \mathfrak{der}(\mathcal{O}) \) is a simple Lie algebra of type \( G_2 \), and \( \mathfrak{der}(\mathcal{H}) \) is a simple Lie algebra of type \( A_1 \) \cite[first column in (4.80)]{schafer2017intro_to_nonassoc}.
Elements of \( \mathfrak{der}(\mathcal{C}) \) can be described using the left and right multiplication operators \( l_a, r_a \colon \mathcal{C} \to \mathcal{C} \), defined by \( l_a(b) := ab \) and \( r_a(b) := ba \) for all \( a, b \in \mathcal{C} \). First, the endomorphism
\[
D_{a,b} := [l_a, l_b] + [l_a, r_b] + [r_a, r_b]
\]
is a derivation of \( \mathcal{C} \). Furthermore,
\[
\mathfrak{der}(\mathcal{C}) = \left\{ \sum_s D_{a_s, b_s} \colon a_s, b_s \in \mathcal{C},\ s \in \mathbb{N} \right\} \equiv D_{\mathcal{C}, \mathcal{C}}.
\]
(Since \( D_{a,b} = -D_{b,a} \) and \( D_{1,a} = 0 \), it is clear that \( \mathfrak{der}(\mathcal{K}) = 0 = \mathfrak{der}(\mathcal{F}) \).)
\smallskip

A commutative algebra \( \mathcal{J} \) satisfying the Jordan identity
\(
(u^2 v) u = u^2 (v u)
\)
is called a \emph{Jordan algebra}. The relevant Jordan algebras for our purposes are the Hermitian matrices
\[
\mathcal{H}_3(\mathcal{C}) := \{ u \in \mathcal{M}_3(\mathcal{C}) \colon \bar{u}^t = u \},
\]
where \( \mathcal{C} \) is any Hurwitz algebra,
 and we denote   \( \overline{(u_{ij})} := (\overline{u_{ij}}) \). 
Its Jordan product   is the so-called \emph{symmetrized} product on \( \mathcal{M}_3(\mathcal{C}) \), defined by
\[
u \cdot v := \frac{1}{2}(uv + vu),
\]
where juxtaposition denotes the usual matrix product.  
 Note that \( u_{ii} \in \mathbb{F} \), and \( \overline{u_{ij}} = u_{ji} \) are arbitrary elements of \( \mathcal{C} \) for \( i < j \), so that
\[
\dim \mathcal{H}_3(\mathcal{C}) = 3 \cdot \dim \mathcal{C} + 3 = 6, 9, 15, \text{ and } 27,
\]
according to \( \dim \mathcal{C} = 1, 2, 4, 8 \), respectively.
Let \( 1 \) denote the \( 3 \times 3 \) identity matrix, and define the trace of \( u \in \mathcal{J}=\mathcal{H}_3(\mathcal{C}) \)
by \( \mathrm{tr}((u_{ij})) := \sum_i u_{ii} \in \mathbb{F} \). Then we have the vector space decomposition
\[
\mathcal{J} = \mathbb{F} \cdot 1 \oplus \mathcal{J}_0, \quad \text{where } \mathcal{J}_0 := \{ u \in \mathcal{J} \colon \mathrm{tr}(u) = 0 \}.
\]
The product
\[
u * v := u \cdot v - \frac{1}{3} \mathrm{tr}(u \cdot v) \cdot 1
\]
defines a commutative multiplication on \( \mathcal{J}_0 \).

Again, the algebra of derivations
\[
\mathfrak{der}(\mathcal{J}) := \{ d \colon \mathcal{J} \to \mathcal{J} \colon d(u \cdot v) = d(u) \cdot v + u \cdot d(v) \ \forall\, u, v \in \mathcal{J} \}
\]
is a Lie algebra, and its elements can be written in terms of multiplication operators. 
In fact, denote by \( R_u \colon \mathcal{J} \to \mathcal{J} \), \( v \mapsto v \cdot u \), the multiplication operator (note that left and right multiplications coincide here). Then
\[
[R_u, R_v] \in \mathfrak{der}(\mathcal{J})
\]
for all \( u, v \in \mathcal{J} \), and these elements span the entire Lie algebra \( \mathfrak{der}(\mathcal{J}) \).
For \( \mathcal{C} \in \{ \mathcal{F}, \mathcal{K}, \mathcal{H}, \mathcal{O} \} \) and \( \mathcal{J} = \mathcal{H}_3(\mathcal{C}) \), the Lie algebra \( \mathfrak{der}(\mathcal{J}) \) is simple of type \( A_1 \), \( A_2 \), \( C_3 \), and \( F_4 \), respectively \cite[first row in (4.80)]{schafer2017intro_to_nonassoc}, with dimensions \( 3 \), \( 8 \), \( 21 \), and \( 52 \).

Consider the Lie algebra
\begin{equation}\label{eq_TitsModel}
\mathcal{T}(\mathcal{C}) := \mathfrak{der}(\mathcal{O}) \oplus (\mathcal{O}_0 \otimes \mathcal{J}_0) \oplus \mathfrak{der}(\mathcal{J}),
\end{equation}
equipped with a Lie bracket \([\, ,\, ]\) defined as follows:
\begin{equation}\label{eq_TitsProduct}
\begin{array}{c}
[\, \mathfrak{der}(\mathcal{O}), \mathfrak{der}(\mathcal{J}) ] = 0, \qquad
[d, a \otimes u] = d(a) \otimes u, \qquad
[D, a \otimes u] = a \otimes D(u), \vspace{4pt}\\
{[}a \otimes u, b \otimes v] = \frac{1}{3} \mathrm{tr}(u \cdot v) D_{a,b} + [a, b] \otimes (u * v) + 2 t_{\mathcal{C}}(ab)[R_u, R_v],
\end{array}
\end{equation}
for all \( d \in \mathfrak{der}(\mathcal{O}) \), \( D \in \mathfrak{der}(\mathcal{J}) \), \( a, b \in \mathcal{O}_0 \), and \( u, v \in \mathcal{J}_0 \).
According to \cite[last row of (4.80)]{schafer2017intro_to_nonassoc}, this construction yields all the exceptional Lie algebras other than \( \mathfrak{g}_2 = \mathfrak{der}(\mathcal{O}) \):
\[
\mathcal{T}(\mathcal{F}) \cong \mathfrak{f}_4, \quad
\mathcal{T}(\mathcal{K}) \cong \mathfrak{e}_6, \quad
\mathcal{T}(\mathcal{H}) \cong \mathfrak{e}_7, \quad
\mathcal{T}(\mathcal{O}) \cong \mathfrak{e}_8.
\]
 \smallskip

We now describe how to equip the four exceptional complex Lie algebras with natural \( G = \mathbb{Z}_2^3 \)-gradings, induced by a fixed grading on the octonion algebra. The octonion algebra \( \mathcal{O} \) is \( \mathbb{Z}_2^3 \)-graded via
\[
\Gamma_{\mathcal{O}} \colon \mathcal{O} = \bigoplus_{g \in G} \mathcal{O}_g, \quad \text{where} \quad \mathcal{O}_{g_i} = \langle \{e_i\} \rangle, \quad i \in I_0.
\]
This grading naturally induces a \( \mathbb{Z}_2^3 \)-grading on \( \mathfrak{der}(\mathcal{O}) \), in accordance with the general framework from Section~\ref{se_gradings}. It is given explicitly by
\begin{equation}\label{eq_gradG2}
\Gamma_{\mathfrak{g}_2} \equiv  \begin{cases}
\mathfrak{der}(\mathcal{O})_{g_0} = 0, \\[2pt]
\mathfrak{der}(\mathcal{O})_{g_i} = \left\{ \sum_{g + h = g_i} D_{a_g, b_h} : a_g \in \mathcal{O}_g,\ b_h \in \mathcal{O}_h \right\}, \quad i \in I.
\end{cases}
\end{equation}
Each nonzero homogeneous component is a 2-dimensional abelian subspace (so a Cartan subalgebra), and the grading enjoys strong symmetry properties: its Weyl group is isomorphic to the full automorphism group \( \Aut(\mathbb{Z}_2^3) \).
Moreover, the  grading is compatible with $\Gamma_{\mathcal{O}}$  in the sense that if \( d \in \mathfrak{der}(\mathcal{O})_{g_i} \), then \( d(e_j) \in \mathcal{O}_{g_{i * j}} \) (besides, \( d(e_j) = 0 \) whenever \( i = j \)). 
As a consequence, not only is the Lie algebra   \( \mathfrak{g}_2=\mathfrak{der}(\mathcal{O}) \) \( \mathbb{Z}_2^3 \)-graded, but all exceptional Lie algebras constructed via the Tits model inherit nice \( \mathbb{Z}_2^3 \)-gradings from the octonions!
Namely, if \( \mathcal{L} = \mathcal{T}(\mathcal{C}) \), then the grading \( \Gamma_{\mathcal{L}} \colon \mathcal{L} = \bigoplus_{g \in G} \mathcal{L}_g \) is defined as follows:
\begin{equation}\label{eq_gradtodas}
 \Gamma_{\mathcal{L}}\equiv \begin{cases}
\mathcal{L}_{g_0} = \mathfrak{der}(\mathcal{J}), \\[2pt]
\mathcal{L}_{g_i} = \mathfrak{der}(\mathcal{O})_{g_i} \oplus (\mathcal{O}_{g_i} \otimes \mathcal{J}_0), \quad i \in I.
\end{cases}
\end{equation}
 Note that the neutral component \( \mathcal{L}_e \) has dimension \( \dim(\mathcal{L}_e) \in \{3, 8, 21, 52\} \), depending on the Hurwitz algebra \( \mathcal{C} \), while each non-neutral homogeneous component has dimension \( \dim( \mathcal{L}_g ) = 2 + \dim \mathcal{J}_0 = 3 \dim \mathcal{C} + 4 \in \{7, 10, 16, 28\} \).
All these gradings \( \Gamma_{\mathcal{L}} \) share certain structural features that are crucial for studying their graded contractions. For example, unlike the case of the \( G \)-grading on \( \mathfrak{g}_2 \), where \( \mathcal{L}_e = 0 \), here \( \mathcal{L}_e \) is a simple Lie algebra.
Regardless of the chosen Hurwitz algebra \( \mathcal{C} \), the associated grading on \( \mathcal{L} = \mathcal{T}(\mathcal{C}) \) exhibits a high degree of symmetry: the Weyl group is as large as possible, 
\[
\mathcal{W}(\Gamma_{\mathcal{L}}) \cong \Aut(G) \cong S_*(I).
\]
Indeed, for any \( \sigma \in S_*(I) \), the map \( f_\sigma \colon \mathcal{L} \to \mathcal{L} \) defined by linear extension of
\begin{equation}\label{eq_extendiendo_colin_a_iso}
D_{e_i,e_j} \mapsto D_{e_{\sigma(i)}, e_{\sigma(j)}}, \quad
e_i \otimes x \mapsto e_{\sigma(i)} \otimes x, \quad
D \mapsto D,    
\end{equation}
for all $i,j\in I$, \( x \in \mathcal{J}_0 \) and \( D \in \mathfrak{der}(\mathcal{J}) \), is an automorphism of the Lie algebra \( \mathcal{L} \).

From now on, we will always work with the group \( G = \mathbb{Z}_2^3 \) and the gradings described above. %
For convenience, we introduce the following notation: 
we write \( \mathcal{D}_i := (\mathfrak{der}(\mathcal{O}))_{g_i} \) and \( \mathcal{M}_i := \mathcal{O}_{g_i} \otimes \mathcal{J}_0 = e_i \otimes \mathcal{J}_0 \) for each \( i \in I \), and
\( \mathcal{L}_i := \mathcal{L}_{g_i} \) for each \( i \in I_0 \).
At some point, we will also use the notation \( \mathcal{M} := \bigoplus_{i \in I} \mathcal{M}_i =\mathcal{O}_0\otimes \mathcal{J}_0\) and \( \mathcal{D} := \bigoplus_{i \in I} \mathcal{D}_i = \mathfrak{der}(\mathcal{O}) \), so that $\mathcal{L}=\mathcal{D}\oplus \mathcal{M}\oplus \mathcal{L}_0   $.

 \subsection{Supports and generalised nice sets}\label{se_generica}
 
Our goal in this section is to prove that the support of any graded contraction of \( \Gamma_{\mathcal{L}} \) is a generalised nice set, and conversely, that any generalised nice set arises as the support of some graded contraction of \( \Gamma_{\mathcal{L}} \).
We begin by showing that every graded contraction of \( \Gamma_{\mathcal{L}} \) must necessarily be generic. This requires a few preliminary lemmas, ensuring, for instance, the existence of sufficiently many linearly independent elements.

 \begin{lemma}\label{le_simetrico}
For any pair of indices \( i, j \in I_0 \), we have \( [\mathcal{L}_i, \mathcal{L}_j] \ne 0 \).
\end{lemma}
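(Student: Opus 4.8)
The plan is to establish $[\mathcal{L}_i,\mathcal{L}_j]\neq 0$ by a case analysis on which of the indices lie in $\{0\}$ versus $I$, exploiting the explicit description of the bracket in \eqref{eq_TitsProduct} together with the decomposition $\mathcal{L}_0=\mathfrak{der}(\mathcal{J})$ and $\mathcal{L}_i=\mathcal{D}_i\oplus\mathcal{M}_i$ for $i\in I$. Since $\mathcal{W}(\Gamma_{\mathcal{L}})\cong S_*(I)$ acts transitively on ordered generating triplets and on various smaller configurations, I would first reduce the number of genuinely distinct cases: up to collineation it suffices to treat $(i,j)=(0,0)$, $(0,1)$, $(1,1)$, $(1,2)$ (a generating pair-type, i.e.\ $g_i\neq g_j$ both nonzero), and note that when $i=j\in I$ the bracket $[\mathcal{L}_i,\mathcal{L}_i]$ lands in $\mathcal{L}_0$. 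Actually the cleanest split is: (i) $i=j=0$; (ii) exactly one of $i,j$ is $0$; (iii) $i=j\in I$; (iv) $i\neq j$, both in $I$.

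For (i), $[\mathcal{L}_0,\mathcal{L}_0]=[\mathfrak{der}(\mathcal{J}),\mathfrak{der}(\mathcal{J})]\neq 0$ because $\mathfrak{der}(\mathcal{J})$ is a nonzero simple (hence perfect) Lie algebra for every Hurwitz $\mathcal{C}$. For (ii), say $j=0$ and $i\in I$: the bracket $[D,a\otimes u]=a\otimes D(u)$ shows $[\mathcal{L}_0,\mathcal{L}_i]\supseteq e_i\otimes\mathfrak{der}(\mathcal{J})(\mathcal{J}_0)$; since $\mathfrak{der}(\mathcal{J})$ acts nontrivially on $\mathcal{J}_0$ (as $\mathcal{J}_0\neq 0$ and $\mathfrak{der}(\mathcal{J})$ is simple, it cannot annihilate the nonzero module $\mathcal{J}_0$, which it does not — one sees directly $[R_u,R_v](w)=(w\cdot u)\cdot v-(w\cdot v)\cdot u$ is not identically zero on $\mathcal{J}_0$), this is nonzero. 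For (iii), $i=j\in I$: from the last line of \eqref{eq_TitsProduct} with $a=b=e_i$, using $e_i^2=-1$ so $t_{\mathcal C}(e_i e_i)=t_{\mathcal C}(-1)=-2\neq 0$, the bracket $[e_i\otimes u, e_i\otimes v]=2t_{\mathcal C}(e_i^2)[R_u,R_v]=-4[R_u,R_v]$ ranges over all of $\mathfrak{der}(\mathcal{J})\neq 0$; alternatively one can also pick up $\mathcal{D}_i$ via $[e_i\otimes u, e_i\otimes v]$ when $\dim\mathcal{J}_0$ contributes, but the $[R_u,R_v]$ term already suffices. For (iv), $i\neq j$ in $I$: here $[\mathcal{D}_i,\mathcal{D}_j]\neq 0$ already inside $\mathfrak{der}(\mathcal{O})=\mathfrak{g}_2$ — this holds because the $\mathbb{Z}_2^3$-grading on $\mathfrak{g}_2$ has all homogeneous components two-dimensional Cartan subalgebras and $[\mathcal{D}_i,\mathcal{D}_j]$ is known to be nonzero for $i\neq j$ (this is part of the structure recalled in Section~\ref{se_nice}, and can be checked from $D_{e_i,e_j}$ acting on $\mathcal{O}$, e.g.\ $D_{e_i,e_j}(e_k)\neq 0$ for a suitable $k$); alternatively, use $[e_i\otimes u, e_j\otimes v]$, whose $\mathcal{D}$-component is $\tfrac13\mathrm{tr}(u\cdot v)D_{e_i,e_j}$ and whose $\mathcal{M}_{i*j}$-component is $[e_i,e_j]\otimes(u*v)$, and observe that $[e_i,e_j]=2e_ie_j=\pm 2e_{i*j}\neq 0$ while $u*v$ can be taken nonzero, so the $\mathcal{M}_{i*j}$-component is already nonzero.

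The main obstacle I anticipate is case (iv) done purely via the $\mathcal{D}$-part, which would require quoting or re-deriving the precise fact that $[\mathcal{D}_i,\mathcal{D}_j]\neq 0$ in $\mathfrak{g}_2$; to sidestep this I would rely on the $\mathcal{M}_{i*j}$-component $[e_i,e_j]\otimes(u*v)$, for which the only thing to verify is that $*$ is not identically zero on $\mathcal{J}_0$ — and this is immediate since $\mathcal{J}_0$ contains a nonzero element $u$ with $u\cdot u\notin\mathbb{F}\cdot 1$, so $u*u=u\cdot u-\tfrac13\mathrm{tr}(u\cdot u)1\neq 0$ (concretely, take a diagonal traceless matrix with a repeated entry, whose square is diagonal but not scalar). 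A uniform alternative covering (iii) and (iv) simultaneously is to note that in all cases with $i,j\in I$ the $\mathcal{M}$-part of $[\mathcal{L}_i,\mathcal{L}_j]$ is $[e_i,e_j]\otimes(\mathcal{J}_0*\mathcal{J}_0)$, which is nonzero whenever $[e_i,e_j]\neq 0$, and $[e_i,e_j]=0$ only when $\{i,j\}\subseteq\{0\}$ or $i=j$ — the latter being exactly case (iii), already handled. Assembling the four cases completes the proof.
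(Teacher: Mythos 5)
Your proof is correct and follows essentially the same route as the paper: $[\mathcal{L}_0,\mathcal{L}_0]\ne 0$ by simplicity of $\mathfrak{der}(\mathcal{J})$, $[\mathcal{L}_0,\mathcal{L}_j]\ne 0$ via $[D,e_j\otimes u]=e_j\otimes D(u)$, and for distinct $i,j\in I$ the nonvanishing of the $\mathcal{M}_{i*j}$-component $[e_i,e_j]\otimes(u*v)=\pm 2e_{i*j}\otimes(u*v)$. Your case (iii), $i=j\in I$ handled via $[e_i\otimes u,e_i\otimes v]=-4[R_u,R_v]$, is a case the paper's proof passes over silently (it is treated later in Lemma~\ref{le_comovanloscorchetes}), so including it is a small improvement rather than a divergence.
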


 \begin{proof}
 
 This is immediate. If \( i = 0 \), then \( \mathcal{L}_0 = \mathfrak{der}(\mathcal{J}) \) is a simple Lie algebra, so  
\( [\mathcal{L}_0, \mathcal{L}_0] = \mathcal{L}_0 \).  
Moreover, for \( j \ne 0 \), we have \( [\mathcal{L}_0, \mathcal{L}_j] = \mathcal{O}_{g_j} \otimes \mathcal{J}_0 \ne 0 \) by \eqref{eq_TitsProduct}.  
Finally, for distinct nonzero indices \( i, j \), since \( [e_i,e_j] = \pm2e_{i*j} \) and \( \mathcal{J}_0 * \mathcal{J}_0 = \mathcal{J}_0 \), it follows that \( [\mathcal{L}_i, \mathcal{L}_j] \ne 0 \).  \end{proof}

 \begin{lemma}\label{le_eltos}
Let \( \{i, j, k\} \subseteq I \) be a generating triplet, and let \( g, h, f \in G = \mathbb{Z}_2^3 \) be such that
\[
(g, h, f) \in \{ (g_i, e, e),\, (g_i, e, g_k),\, (g_i, e, g_i),\, (g_i, g_i, g_k),\, (g_i, g_j, g_{i*j}),\, (g_i, g_j, g_k) \}.
\]
Then, for any \( \mathcal{C} \in \{\mathcal{F}, \mathcal{K}, \mathcal{H}, \mathcal{O}\} \),
 there exist   \( x \in \mathcal{L}_g \), \( y \in \mathcal{L}_h \), and \( z \in \mathcal{L}_f \) such that the elements \( [x, [y, z]] \) and \( [y, [z, x]] \) are linearly independent in \( \mathcal{L} = \mathcal{T}(\mathcal{C}) \). \end{lemma}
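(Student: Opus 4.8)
The statement asserts that for each of the six admissible triples of degrees $(g,h,f)$ and each Hurwitz algebra $\mathcal{C}$, one can choose homogeneous $x,y,z$ so that $[x,[y,z]]$ and $[y,[z,x]]$ are linearly independent in $\mathcal{T}(\mathcal{C})$. The plan is to handle the six cases one by one, in each case writing down explicit homogeneous elements — typically a pure tensor $e_i\otimes u$ in $\mathcal{M}_i$, a derivation $D_{e_i,e_j}$ in $\mathcal{D}_i$, or an element $[R_u,R_v]\in\mathfrak{der}(\mathcal{J})=\mathcal{L}_0$ — and then computing the two iterated brackets using the Tits multiplication rules \eqref{eq_TitsProduct}. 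The key observation that makes this uniform across $\mathcal{C}$ is that $\mathcal{J}_0$ always has $\dim\mathcal{J}_0=3\dim\mathcal{C}+2\geq 5$, so $\mathcal{J}_0*\mathcal{J}_0=\mathcal{J}_0$, $[R_{\mathcal{J}},R_{\mathcal{J}}]=\mathfrak{der}(\mathcal{J})\neq 0$, and one has plenty of room to pick $u,v,w\in\mathcal{J}_0$ witnessing non-proportionality; similarly $[e_i,e_j]=\pm 2e_{i*j}$ and the associator $(e_i,e_j,e_k)\neq 0$ for generating triplets encode the octonionic side.

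The first step is to set up, once and for all, a small dictionary of the brackets that occur: $[D,e_i\otimes u]=e_i\otimes D(u)$ for $D\in\mathfrak{der}(\mathcal{J})$; $[d,e_i\otimes u]=d(e_i)\otimes u$ for $d\in\mathfrak{der}(\mathcal{O})$, with $d(e_i)\in\mathcal{O}_{g_{i*\mathrm{deg}(d)}}$; and the crucial product $[e_i\otimes u,e_j\otimes v]=\tfrac13\mathrm{tr}(u\cdot v)D_{e_i,e_j}+[e_i,e_j]\otimes(u*v)+2t_{\mathcal{C}}(e_ie_j)[R_u,R_v]$, noting that for $i\neq j$ nonzero one has $t_{\mathcal{C}}(e_ie_j)=0$ and $\mathrm{tr}(e_i\cdot e_j\text{-part})$ issues vanish since the $\mathcal{M}$-factor lives in $\mathcal{J}_0$, so this collapses to $[e_i,e_j]\otimes(u*v)=\pm 2e_{i*j}\otimes(u*v)$ when $u,v$ are traceless and $u\cdot v$ has zero trace, plus the $D_{e_i,e_j}$ term when $\mathrm{tr}(u\cdot v)\neq 0$. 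Then, for the triple $(g_i,e,e)$ I would take $x=e_i\otimes u$, $y,z\in\mathcal{L}_0$ with $[y,z]\neq 0$ and $[[y,z],u]$, $[[z,?],\dots]$ chosen so that the two triple brackets land in $e_i\otimes\mathcal{J}_0$ along linearly independent vectors of $\mathcal{J}_0$ (here $\mathfrak{der}(\mathcal{J})$ acting on the $\geq 5$-dimensional $\mathcal{J}_0$ gives ample freedom); the cases $(g_i,e,g_k)$, $(g_i,e,g_i)$, $(g_i,g_i,g_k)$ are handled analogously, feeding one or two $\mathcal{M}$-factors through derivations and one octonion product. The genuinely octonionic cases $(g_i,g_j,g_{i*j})$ and $(g_i,g_j,g_k)$ use $x=e_i\otimes u$, $y=e_j\otimes v$, $z=e_{i*j}\otimes w$ (resp. $e_k\otimes w$): the inner bracket $[y,z]$ produces an $\mathcal{M}$-term (degree $g_{j}*g_{i*j}=g_i$, resp. $g_{j*k}$) and possibly a $D_{e_j,e_{i*j}}$-term, and then $[x,[y,z]]$ lands partly in $\mathcal{D}_{i*j*(i*j)}=\mathcal{D}_0=0$... — more precisely it lands in $\mathfrak{der}(\mathcal{O})$ via a $D$-term and in $\mathcal{L}_0$ via a $[R,R]$-term, and the point is that the $\mathfrak{der}(\mathcal{O})$-components of $[x,[y,z]]$ and $[y,[z,x]]$ are nonzero multiples of $D_{e_i,e_{?}}$ and $D_{e_j,e_{?}}$ living in different homogeneous components of $\mathfrak{der}(\mathcal{O})$ (using the compatibility $d(e_j)\in\mathcal{O}_{g_{i*j}}$), hence automatically linearly independent; alternatively one exploits that the octonion associator $(e_i,e_j,e_k)\neq 0$ to separate them.

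The main obstacle I anticipate is the bookkeeping in the two fully generic cases: one must verify that the relevant scalar coefficients — the trace terms $\tfrac13\mathrm{tr}(u\cdot v)$, the quaternionic trace $t_{\mathcal{C}}(e_ie_j)$, and the structure constants of $*$ on $\mathcal{J}_0$ — can be simultaneously arranged to be nonzero (or to realize the needed independence) for \emph{all four} choices $\mathcal{C}\in\{\mathcal{F},\mathcal{K},\mathcal{H},\mathcal{O}\}$, including the smallest case $\mathcal{C}=\mathcal{F}$ where $\mathcal{J}=\mathcal{H}_3(\mathbb{F})$ is only $6$-dimensional and $\mathcal{J}_0$ is $5$-dimensional. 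I expect this to come down to choosing $u,v,w$ among the standard symmetric/Hermitian matrix units $E_{ii}-\tfrac13 1$ and $E_{ij}+E_{ji}$ (or their octonionic analogues), for which $u*v$, $\mathrm{tr}(u\cdot v)$ and $[R_u,R_v]$ are completely explicit and visibly independent; since these witnesses can be taken inside $\mathcal{H}_3(\mathcal{F})\subseteq\mathcal{H}_3(\mathcal{C})$, a single choice works uniformly, reducing the four cases to one computation. Thus the whole proof is a finite, if slightly tedious, verification organized by the six degree-patterns, with the octonion Fano-plane sign conventions of Figure~\ref{fig:Fano} and the rules \eqref{eq_TitsProduct} as the only inputs.
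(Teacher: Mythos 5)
Your overall strategy coincides with the paper's: reduce to $\mathcal{C}=\mathcal{F}$ by choosing all witnesses inside $\mathcal{T}(\mathcal{F})\subseteq\mathcal{T}(\mathcal{C})$ (so the Jordan data live in $\mathcal{H}_3(\mathbb{F})$, where $\mathfrak{der}(\mathcal{J})\cong\mathfrak{so}(3,\mathbb{F})$ and everything is a concrete $3\times3$ matrix computation), and then treat the six degree patterns one by one via \eqref{eq_TitsProduct}. That part is sound and is exactly how the paper proceeds.

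However, there is a genuine gap in your mechanism for the two hardest cases, $(g_i,g_j,g_{i*j})$ and $(g_i,g_j,g_k)$. You argue that the two iterated brackets are ``automatically linearly independent'' because their $\mathfrak{der}(\mathcal{O})$-components lie ``in different homogeneous components.'' This cannot happen: since $G$ is abelian, $[x,[y,z]]$ and $[y,[z,x]]$ always have the \emph{same} degree $g+h+f$, so every component of one lies in the same homogeneous subspace as the corresponding component of the other. In the case $(g_i,g_j,g_{i*j})$ the total degree is $e$, so both triple brackets lie entirely in $\mathcal{L}_0=\mathfrak{der}(\mathcal{J})$ (there is no $\mathfrak{der}(\mathcal{O})$-component at all, as $\mathcal{D}_0=0$ and $D_{e_i,e_i}=0$), and one must exhibit $u,v,w\in\mathcal{J}_0$ for which the resulting inner derivations (essentially $[R_u,R_w]$ versus $[R_v,R_{u*w}]$) are independent --- this is a nontrivial computation in $\mathcal{H}_3(\mathbb{F})$, not an automatic degree argument. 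In the generating-triplet case the $\mathfrak{der}(\mathcal{O})$-components of both brackets are multiples of $D_{e_i,e_{j*k}}$ and $D_{e_j,e_{k*i}}$, which both lie in the \emph{same} $2$-dimensional space $\mathcal{D}_{i*j*k}$; independence there, plus nonvanishing of the trace coefficients $\mathrm{tr}(v\cdot w)$, $\mathrm{tr}(w\cdot u)$, must be checked explicitly (the paper sidesteps this by quoting the analogous lemma for $\mathfrak{der}(\mathcal{O})\subseteq\mathcal{T}(\mathcal{F})$ from the $\mathfrak{g}_2$ paper). More generally, your text is a plan rather than a proof: the actual content of the lemma is the exhibition of concrete $u,v,w$ (and derivations) and the verification of independence in each of the six cases, none of which is carried out; as it stands, the argument you do give for the two octonionic cases would fail if executed literally.
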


 \begin{proof}
 It is enough to prove the case \( \mathcal{L} = \mathcal{T}(\mathcal{F}) \), since \( \mathcal{T}(\mathcal{F}) \subseteq \mathcal{T}(\mathcal{C}) \) for all possible Hurwitz algebras \( \mathcal{C} \). Note that for \( \mathcal{J} = \mathcal{H}_3(\mathbb{F}) \), the Jordan algebra of symmetric \( 3 \times 3 \) matrices, its derivation algebra becomes \( \f{der}(\mathcal{J}) = [R_{\mathcal{J}}, R_{\mathcal{J}}] = \{\mathrm{ad}(x) : x \in \mathcal{M}_3(\mathbb{F}),\ x^t = -x \} \), which is naturally isomorphic to the Lie algebra of skew-symmetric matrices \( \f{so}(3, \mathbb{F}) \). Here, the map \( \mathrm{ad}(x) \colon \mathcal{J} \to \mathcal{J} \) is defined by \( \mathrm{ad}(x)(u) = xu - ux \), for any \( u \in \mathcal{J} \). This observation will help us identify suitable elements. 
Let \( e_{ij} \in \mathcal{M}_3(\mathbb{F}) \) denote the matrix with a 1 in the \( (i, j) \) position and zeros elsewhere. Define \( e_{ij}^- := e_{ij} - e_{ji} \in \f{so}(3, \mathbb{F}) \), and \( e_{ij}^+ := e_{ij} + e_{ji} \in \mathcal{J} \). 
Note also the relationship between derivations and the associator in the Jordan algebra: since \( \mathcal{J} \) is commutative, we have
\(
[R_u, R_v](w) = u \cdot (v \cdot w) - v \cdot (u \cdot w) = (v, w, u)^{\cdot}.
\)
We now proceed to exhibit convenient homogeneous elements adapted to the various cases of \( (g, h, f) \).

\begin{itemize}
\item[$-$]
Take \( (x,y,z) = (e_i \otimes u, D, D') \in \mathcal{L}_i \times \mathcal{L}_0 \times \mathcal{L}_0 \) as follows:
\[
u = e_{23}^+, \quad
D = \mathrm{ad}(e_{12}^-), \quad
D' = \mathrm{ad}(e_{13}^-).
\]
We compute:
\[
[x, [y, z]] = -e_i \otimes [D, D'](u), \qquad [y, [z, x]] = e_i \otimes D D'(u).
\]
These two elements in \( \mathcal{L}_i \) are linearly independent, because the   elements in \( \mathcal{J}_0 \)   involved are linearly independent:
\[
[D, D'](u) = -2(e_{22} - e_{33}), \qquad D D'(u) = 2(e_{11} - e_{22}).
\]

\item[$-$]
Consider \( (x,y,z) = (e_i \otimes u, D, e_k \otimes v) \in \mathcal{L}_i \times \mathcal{L}_0 \times \mathcal{L}_k \), given by:
\[
u = e_{13}^+, \quad
D = \mathrm{ad}(e_{12}^-), \quad
v = e_{23}^+.
\]
Using Eq.~\eqref{eq_TitsProduct}, we get:
\(
[x, [y, z]] = \tfrac{1}{3} \mathrm{tr}(u \cdot D(v)) D_{e_i, e_k} + [e_i, e_k] \otimes (u * D(v)), 
\)
and
\(
[y, [z, x]] = [e_i, e_k] \otimes D(u * v).
\)
These are linearly independent since their projections to \( \mathcal{O}_0 \otimes \mathcal{J}_0 \) are
linearly independent. To see this, compute
\[
D(u * v) = e_{11} - e_{22}, \qquad u * D(v) = \tfrac{1}{3}(e_{11} - 2e_{22} + e_{33}).
\]

\item[$-$]
Let \( (x,y,z) = (e_i \otimes u, D, e_i \otimes v) \in \mathcal{L}_i \times \mathcal{L}_0 \times \mathcal{L}_i \), with:
\[
u = e_{13}^+, \quad
D = \mathrm{ad}(e_{13}^- + e_{23}^-), \quad
v = e_{23}^+.
\]
We again use Eq.~\eqref{eq_TitsProduct} to get
 $
 [x, [y, z]]=-4[R_{u},R_{D(v)}],
 $
 and
 $
 [y, [z,x]]= -4[D,[R_{v},R_{u}]]=4([R_{D(u)},R_{v}]+[R_{u},R_{D(v)}]).
 $
 These are independent derivations of $\mcal J$, since 
 for an arbitrary element $w=\tiny \left(
\begin{array}{ccc}
 a & b & c \\
 b & d & e \\
 c & e & -a-d \\
\end{array}
\right)\in \mcal J_0$, we compute 
 $$
4(v,w,D(u))
=\tiny   \left(
\begin{array}{ccc}
 2 c & 2 c+e & -2 a-2 b-d \\
 2 c+e & 4 e & -2 a-b-4 d \\
 -2 a-2 b-d & -2 a-b-4 d & -2 (c+2 e) \\
\end{array}
\right)=\mathrm{ad}(e_{13}^-+2e_{23}^-)(w)
$$
and  
$$
4( D(v),w,u)
=\tiny   \left(
\begin{array}{ccc}
 -4 c & -c-2 e & 4 a+b+2 d \\
 -c-2 e & -2 e & a+2 (b+d) \\
 4 a+b+2 d & a+2 (b+d) & 2 (2 c+e) \\
\end{array}
\right)=-\mathrm{ad}(2e_{13}^-+e_{23}^-)(w).
 $$

\item[$-$]
Choose \( (x,y,z) = (e_i \otimes u, e_i \otimes v, e_k \otimes w) \in \mathcal{L}_i \times \mathcal{L}_i \times \mathcal{L}_k \), where:
\[
u = e_{11} - e_{33}, \quad v = w = e_{12}^+.
\]
Using Eq.~\eqref{eq_TitsProduct}, and the facts \( [e_i, e_i e_k] = -2e_k \), and \( D_{e_i, e_k}(e_i) = 4e_k \), we compute:

  $$
  \begin{array}{ll}
  [x,[y,z]]&=-\frac13D_{e_i,e_k}(e_i)\otimes \mathrm{tr}(v\cdot w)u+2\big( \frac13\mathrm{tr}(u\cdot (v*w))D_{e_i,e_ie_k}+[e_i,e_i e_k]\otimes u*(v*w)  \big)\\
  &=-4e_k\otimes \big(\frac13\mathrm{tr}(v\cdot w)u +u*(v*w)\big)+\frac23\mathrm{tr}(u\cdot (v*w))D_{e_i,e_ie_k},\\
 {[}y, [x, z]] &= \text{same with } u \leftrightarrow v.
  \end{array}
  $$
  In order to check that these elements are linearly independent, it is enough to verify that the corresponding tensors in $\mathcal{O}_0 \otimes \mathcal{J}_0$ involve linearly independent elements of $\mathcal{J}_0$. This is clear for our choices of $u$, $v$, and $w$, since
\[\begin{array}{c}
\tfrac{1}{3} \mathrm{tr}(v \cdot w) u + u * (v * w) = \tfrac{1}{3}(2e_{11} - e_{22} - e_{33}), \\
\tfrac{1}{3} \mathrm{tr}(u \cdot w) v + v * (u * w) = \tfrac{1}{6}(e_{11} + e_{22} - 2e_{33})\end{array}
\]
are linearly independent in $\mathcal{J}_0$.

\item[$-$]
Take \( (x,y,z) = (e_i \otimes u, e_j \otimes v, e_{i*j} \otimes w) \in \mathcal{L}_i \times \mathcal{L}_j \times \mathcal{L}_{i*j} \), with:
\[
u = e_{11} - e_{33}, \quad v = e_{12}^+, \quad w = e_{13}^+ + e_{23}^+.
\]
Using \( [e_j, e_{i*j}] = 2e_i \), we obtain:
\(
[y, z] = \tfrac{1}{3} \mathrm{tr}(v \cdot w) D_{e_j, e_{i*j}} + 2e_i \otimes (v * w),
\)
and since \( D_{e_j, e_{i*j}}(e_i) = 0 \), it follows:
    $$
  [x,[y,z]]=2[e_i\otimes u,e_i\otimes(v*w)]=-8[R_u,R_{v*w}]=-4[R_u,R_{w}].
  $$
  Analogously, $[y,[z,x]]=-8[R_v,R_{u*w}].$ These derivations of $\mcal J$ are linearly independent for our choices of $u$, $v$ and $w$, since
  $$
 4[R_u,R_{w}]\colon \tiny \left(
\begin{array}{ccc}
 a & b & c \\
 b & d & e \\
 c & e & -a-d \\
\end{array}
\right)\mapsto   \left(
\begin{array}{ccc}
 4 c & c+2 e & -4 a-b-2 d \\
 c+2 e & 2 e & -a-2 (b+d) \\
 -4 a-b-2 d & -a-2 (b+d) & -2 (2 c+e) \\
\end{array}
\right)
  $$
  while 
  $$
 8[R_v,R_{u*w}]\colon \tiny \left(
\begin{array}{ccc}
 a & b & c \\
 b & d & e \\
 c & e & -a-d \\
\end{array}
\right)\mapsto  \left(
\begin{array}{ccc}
 -2 c & -e & 2 a+d \\
 -e & 0 & b \\
 2 a+d & b & 2 c \\
\end{array}
\right).
  $$
  
  \item[$-$] Finally, the case corresponding to a generating triplet \( \{i,j,k\} \) was already proved in \cite[Lemma 2.2]{draper2024gradedg2}, since \( \mf{der}(\mathcal O) \subseteq \mathcal{T}(\mathcal F) \), and this inclusion is compatible with the \( G \)-gradings considered on both Lie algebras.

\end{itemize}
\end{proof}
With this lemma in mind, it is not difficult to prove that generic graded contractions are the only ones we need to consider in this work.

\begin{prop}
Let \( \varepsilon \colon G \times G \to \mathbb{F} \) be a graded contraction of  
\( \Gamma_{\mathcal{L}} \), the grading defined in \eqref{eq_gradtodas}. Then  
\( \varepsilon \) is a generic graded contraction.
\end{prop}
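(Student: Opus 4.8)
The plan is to verify that $\varepsilon$ satisfies the generic conditions (c1) and (c2) by using the defining equations (a1) and (a2) together with the linear independence guaranteed by Lemma~\ref{le_eltos}. For (c1): given any $i,j\in I_0$, Lemma~\ref{le_simetrico} provides homogeneous elements $x\in\mathcal{L}_{g_i}$, $y\in\mathcal{L}_{g_j}$ with $[x,y]\ne0$, so condition (a1) forces $\varepsilon(g_i,g_j)=\varepsilon(g_j,g_i)$; since every element of $G$ is some $g_i$, this gives (c1) in full. So $\varepsilon$ is symmetric and we may henceforth treat its support as a set of unordered pairs.

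For (c2), fix $g,h,k\in G$ and aim to show $\varepsilon(g,h,k)=\varepsilon(k,g,h)$ (and hence, by symmetry of the roles, that all three cyclic values $\varepsilon(g,h,k)$, $\varepsilon(h,k,g)$, $\varepsilon(k,g,h)$ coincide). Condition (a2) reads
\[
\big(\varepsilon(g,h,k)-\varepsilon(k,g,h)\big)[x,[y,z]]+\big(\varepsilon(h,k,g)-\varepsilon(k,g,h)\big)[y,[z,x]]=0
\]
for all homogeneous $x\in\mathcal{L}_g$, $y\in\mathcal{L}_h$, $z\in\mathcal{L}_k$. If for a given triple $(g,h,k)$ one can choose such $x,y,z$ with $[x,[y,z]]$ and $[y,[z,x]]$ linearly independent, then both scalar coefficients vanish, yielding $\varepsilon(g,h,k)=\varepsilon(k,g,h)=\varepsilon(h,k,g)$, which is exactly (c2) for that triple. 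Thus it suffices to produce, for \emph{every} triple $(g,h,k)\in G^3$, a choice of homogeneous elements realising this independence.

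The reduction to manageably few cases is the combinatorial heart of the argument, and it is where the symmetry of the grading does the work. Using $\varepsilon(g,h,k)=\varepsilon(g,h+k)\varepsilon(h,k)$, note that if any of $\varepsilon(g,h+k)$, $\varepsilon(h,k)$, $\varepsilon(k,g+h)$, $\varepsilon(g,h)$ vanishes, the relevant products are zero and (c2) may hold trivially or reduce to a smaller instance; the substantive cases are those where the pairs involved lie in the support. Moreover, the Weyl group $\mathcal{W}(\Gamma_{\mathcal{L}})\cong S_*(I)$ acts on triples via the automorphisms $f_\sigma$ of \eqref{eq_extendiendo_colin_a_iso}, and this action respects linear independence of the relevant double brackets; so it is enough to check one representative in each $S_*(I)$-orbit of triples $(g,h,k)$. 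Up to this action and up to relabelling $\{g,h,k\}$ (permissible since (a2) is cyclic-symmetric and, once symmetry of $\varepsilon$ is known, we want full symmetry of the ternary map), every triple of elements of $\mathbb{Z}_2^3$ falls into one of the shapes
\[
(g_i,e,e),\ (g_i,e,g_k),\ (g_i,e,g_i),\ (g_i,g_i,g_k),\ (g_i,g_j,g_{i*j}),\ (g_i,g_j,g_k)
\]
with $\{i,j,k\}$ a generating triplet in the last two, together with degenerate cases where some entry is $e=g_0$ repeated or the whole triple generates only a proper subgroup — and in each of these Lemma~\ref{le_eltos} supplies the required linearly independent pair $[x,[y,z]]$, $[y,[z,x]]$. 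The main obstacle is precisely this case bookkeeping: one must check that the enumeration of orbit representatives above is genuinely exhaustive (including the subtle subcases where $g+h$, $h+k$, or $g+k$ equals $e$, or where two of the three degrees coincide with each other but not with $e$), and that in the few residual configurations not directly covered — those where, say, $[x,[y,z]]$ and $[y,[z,x]]$ cannot be made independent because one of them is identically zero on $\mathcal{L}_e$-type components — the equation (a2) is either vacuous or already follows from the symmetry $\varepsilon(g,h)=\varepsilon(h,g)$ and the multiplicativity-type identity $\varepsilon(g,h,k)=\varepsilon(g,h+k)\varepsilon(h,k)$. Once all orbit representatives are dispatched, $\varepsilon$ satisfies (c1) and (c2), hence is generic by definition.
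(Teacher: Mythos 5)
Your proposal follows essentially the same route as the paper: condition (c1) comes from Lemma~\ref{le_simetrico} together with (a1), and condition (c2) comes from (a2) once Lemma~\ref{le_eltos} supplies, for each relevant triple of degrees, homogeneous elements with $[x,[y,z]]$ and $[y,[z,x]]$ linearly independent. The list of shapes you give is exactly the list in Lemma~\ref{le_eltos}, so the core of the argument is right.

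The one place where you leave the argument open --- the ``case bookkeeping'' you flag as the main obstacle, and the hedge about ``residual configurations'' where independence might fail --- is actually a non-issue, and the paper closes it in one line. Since $\varepsilon$ is symmetric and the operation $*$ is commutative, one has $\varepsilon(g,h,k)=\varepsilon(g,k,h)$ for \emph{all} triples; hence establishing the cyclic equalities $\varepsilon(g,h,k)=\varepsilon(k,g,h)=\varepsilon(h,k,g)$ for a single ordering of a given multiset $\{g,h,k\}$ already yields equality under all six permutations. The six shapes of Lemma~\ref{le_eltos} exhaust every multiset of three elements of $\mathbb{Z}_2^3$ except those with all three entries equal, and for those (c2) is the tautology $\varepsilon(g,g,g)=\varepsilon(g,g,g)$, so no elements are needed (the paper notes this parenthetically). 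There is consequently no configuration where one must fall back on vanishing of $\varepsilon$-values or on the identity $\varepsilon(g,h,k)=\varepsilon(g,h+k)\varepsilon(h,k)$; that part of your last paragraph can be deleted. Also, a small slip: in the shape $(g_i,g_j,g_{i*j})$ the indices $\{i,j,i*j\}$ form a line of the Fano plane, not a generating triplet --- the generating-triplet hypothesis applies only to the last shape $(g_i,g_j,g_k)$. Finally, the appeal to the Weyl group is harmless but unnecessary, since Lemma~\ref{le_eltos} is already stated and proved for arbitrary indices.
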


\begin{proof}
Since the group \( G = \mathbb{Z}_2^3 \) is fixed, we simplify the notation by writing \( \varepsilon_{ij} := \varepsilon(g_i, g_j) \) and \( \varepsilon_{ijk} := \varepsilon_{i, j * k} \varepsilon_{jk} \) for any \( i, j, k \in I_0 \).

By Lemma~\ref{le_simetrico} and the fact that \( \varepsilon \) satisfies condition~\textnormal{(a1)}, we immediately have \( \varepsilon_{ij} = \varepsilon_{ji} \). Since the operation \( * \) is commutative, this implies \( \varepsilon_{ijk} = \varepsilon_{ikj} \).

Recall that condition~\textnormal{(a2)} reads:
\begin{equation*}
\big( \varepsilon_{ijk} - \varepsilon_{kij} \big)[x, [y, z]] + \big( \varepsilon_{jki} - \varepsilon_{kij} \big)[y, [z, x]] = 0,
\end{equation*}
for any \( x \in \mathcal{L}_i \), \( y \in \mathcal{L}_j \), and \( z \in \mathcal{L}_k \). Hence, 
whenever we can find homogeneous elements \( (x, y, z) \in \mathcal{L}_i \times \mathcal{L}_j \times \mathcal{L}_k \) such that
\begin{equation}\label{eq_li}
\{[x, [y, z]], [y, [z, x]]\} \text{ are linearly independent},
\end{equation}
we conclude that \( \varepsilon_{ijk} = \varepsilon_{kij}= \varepsilon_{jki}   \).

Our goal is to show that \( \varepsilon_{ijk} = \varepsilon_{kij} \) for any \( i, j, k \in I_0 \),
 which gives the symmetry required in condition~(c2).
This follows from Lemma~\ref{le_eltos}: there we took
  suitable elements in the spaces
\[
\mathcal{L}_i \times \mathcal{L}_0 \times \mathcal{L}_0 , \
\mathcal{L}_i \times \mathcal{L}_i \times \mathcal{L}_0 ,\ 
\mathcal{L}_i \times \mathcal{L}_0 \times \mathcal{L}_k,\ 
\mathcal{L}_i \times \mathcal{L}_i \times \mathcal{L}_k,\ 
\mathcal{L}_i \times \mathcal{L}_j \times \mathcal{L}_{i * j},\ 
\mathcal{L}_i \times \mathcal{L}_j \times \mathcal{L}_k,
\]
for $\{i,j,k\}$ a generating triplet,
 all of them satisfying \eqref{eq_li}. 
 (Elements satisfying \eqref{eq_li} also exist if $i=j=k=0$, although in this case we do not need them   to conclude  \( \varepsilon_{000} = \varepsilon_{000} \).)
\end{proof}

In some sense, the previous result shows that our gradings \( \Gamma_{\mathcal{L}} \)'s have the maximal number of linearly independent brackets between homogeneous components, and thus only generic graded contractions can occur.  
At first glance, this might suggest that there are fewer graded contractions of \( \Gamma_{\mathcal{L}} \) compared to other cases, since the condition on \( \varepsilon \) is more restrictive.
However, we will eventually see that this is not the case: for instance, the three gradings on \( \mathfrak{g}_2 \), \( \mathfrak{b}_3 \), and \( \mathfrak{d}_4 \) mentioned earlier admit significantly fewer graded contractions.  
To be more precise, although there are more possible maps \( \varepsilon \) that define graded contractions of \( \Gamma_{\mathfrak{g}_2} \), \( \Gamma_{\mathfrak{b}_3} \), and \( \Gamma_{\mathfrak{d}_4} \), the number of equivalence classes  is much smaller than in the present case.

 Consequently, the supports of our graded contractions turn out to be generalised nice sets, in view of the following proposition.

\begin{prop}\label{pr_elsoporteesGNS}
Let \( \varepsilon \colon G \times G \to \mathbb{F} \) be a generic graded contraction.  
Then its support  
\(
 S^\varepsilon := \{ \{g,h\}   : g,h\in G, \varepsilon(g,h) \ne 0 \}
\)
is a generalised nice set.
\end{prop}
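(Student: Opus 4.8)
The plan is to work directly with the defining condition of a generalised nice set. Recall that we must show: whenever $\{i,j\},\{i*j,k\}\in S^\varepsilon$, then the whole set $P_{\{i,j,k\}}=\{ij,\, jk,\, ki,\, i(j*k),\, j(k*i),\, k(i*j)\}$ is contained in $S^\varepsilon$. In the notation $\varepsilon_{ij}:=\varepsilon(g_i,g_j)$ and $\varepsilon_{ijk}:=\varepsilon_{i,j*k}\varepsilon_{jk}$, the hypothesis says $\varepsilon_{ij}\ne 0$ and $\varepsilon_{k,i*j}\ne 0$, which is precisely $\varepsilon_{ijk}=\varepsilon_{kij}\ne 0$ after using the symmetry of $*$. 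So the key observation is that the ternary map $\varepsilon_{ijk}$, which by genericity is invariant under cyclic permutation of $(i,j,k)$ (condition (c2)) and — since $\varepsilon$ is symmetric — also under the transposition $j\leftrightarrow k$, is in fact fully $S_3$-symmetric in its three arguments.

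From here the argument is short. First I would record that full symmetry of $\varepsilon_{ijk}$ under all permutations of $\{i,j,k\}$ follows from (c1) and (c2): cyclic invariance plus one transposition generate all of $S_3$. Then, assuming $\varepsilon_{ijk}\ne 0$, I would read off that each of the six factorisations of this common nonzero value is a product of two nonzero $\varepsilon$-values. Concretely, $\varepsilon_{ijk}=\varepsilon_{i,j*k}\varepsilon_{jk}\ne 0$ forces $\varepsilon_{i,j*k}\ne 0$ and $\varepsilon_{jk}\ne 0$, i.e. $i(j*k),\,jk\in S^\varepsilon$; applying the permutation swapping $i$ and $k$ gives $\varepsilon_{kji}=\varepsilon_{k,j*i}\varepsilon_{ji}\ne 0$, hence $k(i*j),\,ij\in S^\varepsilon$; and the permutation cycling $(i,j,k)\mapsto(j,k,i)$ gives $\varepsilon_{jki}=\varepsilon_{j,k*i}\varepsilon_{ki}\ne 0$, hence $j(k*i),\,ki\in S^\varepsilon$. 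Collecting these six memberships yields exactly $P_{\{i,j,k\}}\subseteq S^\varepsilon$, which is what the definition of a generalised nice set requires. One should also note that the indices $i,j,k$ here range freely over $I_0$ and may coincide, matching the (deliberately permissive) definition of generalised nice set; no appeal to a generating-triplet hypothesis is needed.

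I do not expect a genuine obstacle: the statement is essentially a bookkeeping consequence of the fact that genericity upgrades the symmetry of the binary map to full symmetry of the associated ternary map, combined with the trivial remark that a nonzero product has nonzero factors. The only point demanding mild care is the translation between the hypothesis $\{i,j\},\{i*j,k\}\in S^\varepsilon$ as stated with \emph{unordered} pairs and the ordered-pair expressions $\varepsilon_{ij},\varepsilon_{k,i*j}$; this is harmless because (c1) guarantees $\varepsilon_{ij}\ne 0\iff\varepsilon_{ji}\ne 0$, as already observed in the remark following \eqref{eq_defsupport}. So the proof should amount to: (1) establish $S_3$-symmetry of $\varepsilon_{ijk}$ from (c1)--(c2); (2) observe that $\{i,j\},\{i*j,k\}\in S^\varepsilon$ is equivalent to $\varepsilon_{ijk}\ne 0$; (3) expand $\varepsilon_{ijk}$ along its six symmetric factorisations to extract the six required nonzero binary values; (4) conclude $P_{\{i,j,k\}}\subseteq S^\varepsilon$.
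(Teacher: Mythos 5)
Your proposal is correct and follows essentially the same route as the paper: translate the hypothesis $\{i,j\},\{i*j,k\}\in S^\varepsilon$ into $\varepsilon_{kij}\ne 0$, use the symmetry of the ternary map $\varepsilon_{ijk}$ guaranteed by genericity, and read off the remaining memberships from the nonzero factorisations. The only cosmetic difference is that the paper gets by with the cyclic identities $\varepsilon_{kij}=\varepsilon_{jki}=\varepsilon_{ijk}$ (together with (c1) and commutativity of $*$), whereas you invoke the full $S_3$-symmetry; both are immediate from (c1)--(c2) and lead to the same six pairs.
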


\begin{proof}
The support \( S^\varepsilon \) is well defined since \( \varepsilon \) is symmetric.  
Let \( i, j, k \in I_0 \) be such that \( \{i, j\}, \{i * j, k\} \in S \).  
This implies that \( \varepsilon_{i * j,\, k} \varepsilon_{ij} = \varepsilon_{kij} \ne 0 \).  
Since \( \varepsilon_{kij} = \varepsilon_{jki} = \varepsilon_{ijk} \), all three terms are nonzero, and we conclude that  
\( \{i, k\}, \{i * k, j\}, \{j, k\}, \{j * k, i\} \in S^\varepsilon \).  
In other words, \( P_{\{i, j, k\}} \subseteq S^\varepsilon \).
\end{proof}

To complete the picture, we now prove that every generalised nice set is the support of a generic graded contraction.

\begin{prop}\label{prop_ep^T}

If \( S \) is a generalised nice set, then the map \( \varepsilon^S \colon G \times G \to \mathbb{F} \) defined by
\[
\varepsilon^S(g_i,g_j)\equiv 
\varepsilon^S_{ij} =
\begin{cases}
1 & \text{if } \{i,j\} \in S,\\
0 & \text{if } \{i,j\} \notin S,
\end{cases}
\]
is a   generic   graded contraction with support \( S \).
\color{black}
\end{prop}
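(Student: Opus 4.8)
The plan is to verify directly that $\varepsilon^S$ satisfies the two defining conditions (c1) and (c2) of a generic graded contraction, and then observe that its support is $S$ by construction. Symmetry (c1) is immediate: since $S$ consists of \emph{unordered} pairs, $\{i,j\}\in S$ if and only if $\{j,i\}\in S$, so $\varepsilon^S_{ij}=\varepsilon^S_{ji}$. The support statement is also immediate from the definition, namely $S^{\varepsilon^S}=\{\{i,j\}:\varepsilon^S_{ij}\neq 0\}=\{\{i,j\}:\{i,j\}\in S\}=S$. Once (c1) and (c2) hold, the last clause follows from the general fact, recalled in the preliminaries, that any map satisfying (c1) and (c2) is a generic graded contraction and hence a graded contraction of every $G$-grading on every Lie algebra; in particular it is a graded contraction of $\Gamma_{\mathcal{L}}$.

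The real content is condition (c2), that is, $\varepsilon^S_{ijk}=\varepsilon^S_{kij}$ for all $i,j,k\in I_0$, where $\varepsilon^S_{ijk}:=\varepsilon^S_{i,\,j*k}\,\varepsilon^S_{jk}$. Since the values of $\varepsilon^S$ lie in $\{0,1\}$, the ternary map is again $\{0,1\}$-valued, so it suffices to prove the equivalence of the two \emph{nonvanishing} conditions: $\varepsilon^S_{ijk}\neq 0 \iff \varepsilon^S_{kij}\neq 0$. Now $\varepsilon^S_{ijk}\neq 0$ means $\varepsilon^S_{i,\,j*k}\neq 0$ and $\varepsilon^S_{jk}\neq 0$, i.e. $\{i,\,j*k\}\in S$ and $\{j,k\}\in S$. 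Setting $l:=j*k$, so that $j*l=k$ and $k*l=j$, the hypothesis reads $\{j,k\}\in S$ and $\{l,i\}\in S$ with $l=j*k$; this is exactly the trigger of the generalised-nice-set axiom applied to the triple $\{j,k,i\}$ (note $j*k=l$, so $\{j,k\},\{j*k,i\}\in S$). Hence $P_{\{j,k,i\}}\subseteq S$. But $P_{\{i,j,k\}}=P_{\{j,k,i\}}=P_{\{k,i,j\}}$ as unordered sets, since \eqref{eq_P} is manifestly symmetric under cyclic permutation of $\{i,j,k\}$ (and in fact under the full symmetric group). In particular $\{k,\,i*j\}\in P_{\{k,i,j\}}\subseteq S$ and $\{i,j\}\in P_{\{k,i,j\}}\subseteq S$, which gives $\varepsilon^S_{k,\,i*j}\neq 0$ and $\varepsilon^S_{ij}\neq 0$, i.e. $\varepsilon^S_{kij}\neq 0$. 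The reverse implication is obtained by the same argument with the roles of the triples permuted cyclically, using once more the cyclic invariance of $P_{\{i,j,k\}}$.

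Two small points deserve care. First, one must check the degenerate cases where some of $i,j,k$ coincide or equal $0$: the definition of generalised nice set explicitly allows repeated indices in $I_0$, and the sets $P_{\{i,j,k\}}$ may then have fewer than six elements (e.g. $P_{\{0,0,0\}}=\{00\}$), but the argument above only uses membership of specific pairs in $P_{\{i,j,k\}}$, so it goes through verbatim; in particular when $i=j=k=0$ both sides of (c2) reduce to $\varepsilon^S_{000}$ and there is nothing to prove. Second, one must confirm that $\{i,j*k\}$ and $\{i*j,k\}$ are genuinely among the pairs listed in $P_{\{i,j,k\}}$ in \eqref{eq_P}: indeed the six listed pairs are $\{i,j\},\{j,k\},\{k,i\},\{i,j*k\},\{j,k*i\},\{k,i*j\}$, so both the "trigger" pairs and the "conclusion" pairs we invoked are present.

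I do not expect any genuine obstacle here: the proof is a direct unwinding of the definition of generalised nice set, and the only thing one has to be slightly attentive to is bookkeeping of indices under cyclic permutation together with the identities $j*(j*k)=k$ and $k*(j*k)=j$ in $\mathbb{Z}_2^3$. The heart of the matter---showing that graded contractions of $\Gamma_{\mathcal{L}}$ are forced to be generic and that generic contractions have generalised-nice-set support---has already been done in the preceding propositions; this last statement is the easy converse that closes the loop.
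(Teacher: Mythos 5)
Your proof is correct and follows essentially the same route as the paper's: condition (c1) is immediate from the use of unordered pairs, and condition (c2) is verified by observing that, via the generalised-nice-set axiom together with the full permutation symmetry of $P_{\{i,j,k\}}$, the simultaneous membership of $\{i,j\}$ and $\{i*j,k\}$ in $S$ is equivalent to $P_{\{i,j,k\}}\subseteq S$ and hence to the corresponding condition for any cyclic rearrangement, so both sides of (c2) are either $0$ or $1$ together. Your explicit treatment of the degenerate cases (repeated indices, index $0$) is a welcome extra check but introduces nothing beyond the paper's argument.
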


\begin{proof}
Since \( X_0 \) consists of unordered pairs, we trivially have \( \varepsilon^S_{ij} = \varepsilon^S_{ji} \). Thus, it remains to show that \( \varepsilon^S_{kij} = \varepsilon^S_{ijk} \) for all \( i, j, k \in I_0 \); that is, we must verify that
\[
\varepsilon^S_{k,\, i * j} \, \varepsilon^S_{ij} = \varepsilon^S_{i,\, j * k} \, \varepsilon^S_{jk}.
\]

Observe that both \( \{k, i * j\} \) and \( \{i, j\} \) belong to \( S \) if and only if \( P_{\{i, j, k\}} \subseteq S \), which in turn holds if and only if both \( \{i, j * k\} \) and \( \{j, k\} \) are in \( S \).  
This implies that  either one factor on each side of the above equation is zero---making the equality trivially true---or all four terms are nonzero. In the latter case, all four values equal \( 1 \), so the equality holds as well.
 \end{proof}

\color{black}

In this way, for each Hurwitz algebra \( \mathcal{C} \) and the corresponding simple Lie algebra \( \mathcal{L} = \mathcal{T}(\mathcal{C}) \), and for each generalised nice set \( S \), we obtain a new \( G \)-graded Lie algebra via the graded contraction, \( \mathcal{L}^{\varepsilon^S} \). In the next section, we will analyse the properties of the various algebras that arise in this family.


\section{Properties of the obtained Lie algebras }\label{se_propiedades}

In this section, we study the properties of the Lie algebras obtained in  Proposition~\ref{prop_ep^T}.
Recall that the \emph{lower central series} of a Lie algebra \( \mathcal{L} \) (respectively, the \emph{derived series}) is the sequence of subalgebras defined by \( \mathcal{L}^0 = \mathcal{L} \), \( \mathcal{L}^1 = {\mcal L}'=[\mathcal{L}, \mathcal{L}] \), and \( \mathcal{L}^n = [\mathcal{L}, \mathcal{L}^{n-1}] \) (respectively, \( \mathcal{L}^{(n)} = [\mathcal{L}^{(n-1)}, \mathcal{L}^{(n-1)}] \)) for all \( n \geq 1 \).
The algebra \( \mathcal{L} \) is called \emph{nilpotent} (respectively, \emph{solvable}) if there exists some \( n \) such that \( \mathcal{L}^n = 0 \) (respectively, \( \mathcal{L}^{(n)} = 0 \)). The smallest such integer is called the \emph{nilindex} (respectively, the \emph{solvability index}).
Note that nilpotency implies solvability, but not conversely.
We describe the lower central series (respectively, the derived series) of \( \mathcal{L} \) by a sequence of dimensions
\(
(d_0, d_1, \dots, d_m),
\)
where \( d_i = \dim \mathcal{L}^i \) (respectively, \( d_i = \dim \mathcal{L}^{(i)} \)), and \( d_m = d_\ell \) for all \( \ell \ge m \). That is, the sequence stops when the dimensions stabilize. In particular, the algebra is nilpotent (respectively, solvable) if \( d_m = 0 \).
The \emph{radical} of \( \mathcal{L} \), denoted \( \mathfrak{r}(\mathcal{L}) \), is its largest solvable ideal. The algebra is said to be \emph{semisimple} if \( \mathfrak{r}(\mathcal{L}) = 0 \), which is equivalent to \( \mathcal{L} \) being a direct sum of simple ideals.
Any Lie algebra \( \mathcal{L} \) can be written as a semidirect sum of its radical with a semisimple subalgebra, called a \emph{Levi subalgebra}. This is known as the \emph{Levi decomposition} of \( \mathcal{L} \).
The \emph{center} of \( \mathcal{L} \), denoted by \( \mathfrak{z}(\mathcal{L}) = \{ x \in \mathcal{L} : [x, \mathcal{L}] = 0 \} \), is always contained in the radical. 
The Lie algebra \( \mathcal{L} \) is said to be \emph{reductive} if \( \mathfrak{r}(\mathcal{L}) = \mathfrak{z}(\mathcal{L}) \), that is, if \( \mathcal{L} \) decomposes as the direct sum of a semisimple Lie algebra and an abelian Lie algebra.
 \smallskip
 
 For any generalised nice set \( T \) in Theorem~\ref{teo_listaGNS}, we consider the map \( \varepsilon^T \) as in  Proposition~\ref{prop_ep^T}, and the Lie algebra obtained from the graded algebra \( \mathcal{T}(\mathcal{C}) \) in Eq.~\eqref{eq_gradtodas} by applying the graded contraction   \( \varepsilon^T \). 
We will slightly abuse notation by denoting this contracted Lie algebra by \( \mathcal{L}_T \), referring to \( (\mathcal{T}(\mathcal{C}))^{\varepsilon^T} \), without explicitly mentioning the underlying Hurwitz algebra \( \mathcal{C} \), since many of the properties under consideration do not depend on its choice.
 \smallskip

It is convenient to begin by analysing the bracket of \( \mathcal{L} = \mathcal{T}(\mathcal{C}) \), being slightly more explicit than in Lemma~\ref{le_simetrico}.  
Recall our notation for the different subspaces of \( \mathcal{L} \): for each \( i \in I \), we write  
$\mcal D_i = (\f{der}(\mcal O))_{g_i}$, 
$\mcal D = \bigoplus_{i \in I} \mcal D_i$, 
$\mcal M_i = e_i \otimes \mcal J_0$, 
$\mcal M = \bigoplus_{i \in I} \mcal M_i$,
and \( \mathcal{L}_i = \mathcal{D}_i \oplus \mathcal{M}_i = \mathcal{L}_{g_i} \).

\begin{lemma}\label{le_comovanloscorchetes}
 Let $\mcal L = \mcal T(\mcal C)$. 
 
If $\ell = \dim \mcal C$, then $\dim \mcal D_i = 2$, $\dim \mcal M_i = 3\ell + 2$, and $\dim \mcal L_i = 3\ell + 4$ for all $i \ne 0$.

For all $i \ne j$ in $I$, we have $[\mcal L_0, \mcal D_i] = 0$ and $[\mcal L_0, \mcal M_i] = \mcal M_i$, and:
\begin{equation}\label{eq_varioscorchetes}
\begin{array}{ll}
[\mcal D_i, \mcal D_i] = 0, & [\mcal D_i, \mcal D_j] = \mcal D_{i \ast j}, \\
{[}\mcal D_i, \mcal M_i] = 0, & [\mcal D_i, \mcal M_j] = \mcal M_{i \ast j}, \\
{[}\mcal M_i, \mcal M_i] = \mcal L_0, \quad\qquad& [\mcal M_i, \mcal M_j] = \mathbb{F} D_{e_i, e_j} \oplus \mcal M_{i \ast j} \subsetneq \mcal L_{i \ast j}.
\end{array}
\end{equation}

In addition, if $j \ne k$ in $I$, then:
\begin{itemize}
    \item[\rm(a)] $\{x \in \mcal L_0 : [x, \mcal L_0] = 0\} = 0$;
    \item[\rm(b)] $\{x \in \mcal L_0 : [x, \mcal L_j] = 0\} = 0$;
    \item[\rm(c)] $\{x \in \mcal L_j : [x, \mcal L_j] = 0\} = \mcal D_j$;
    \item[\rm(d)] $\{x \in \mcal L_j : [x, \mcal L_k] = 0\} = 0$.
\end{itemize}
\end{lemma}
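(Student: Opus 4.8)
The plan is to verify each assertion by direct computation using the bracket formulas in \eqref{eq_TitsProduct}, organising the work by the type of homogeneous subspace involved. First I would record the dimension claims: since $\mcal O_{g_i}=\langle e_i\rangle$ is one-dimensional for $i\neq 0$, the space $\mcal M_i=e_i\otimes\mcal J_0$ has dimension $\dim\mcal J_0=\dim\mcal J-1=3\ell+2$, and $\mcal D_i$ has dimension $2$ as recalled after \eqref{eq_gradG2}, so $\dim\mcal L_i=3\ell+4$. For the products with $\mcal L_0=\f{der}(\mcal J)$: the relation $[\mcal L_0,\mcal D_i]=0$ is exactly $[\f{der}(\mcal O),\f{der}(\mcal J)]=0$ from \eqref{eq_TitsProduct}; for $[\mcal L_0,\mcal M_i]=\mcal M_i$ one uses $[D,e_i\otimes u]=e_i\otimes D(u)$ together with the fact that $\f{der}(\mcal J)$ acts on $\mcal J_0$ with $\f{der}(\mcal J)(\mcal J_0)=\mcal J_0$ (since $\mcal J_0$ is an irreducible module for the simple Lie algebra $\f{der}(\mcal J)$, or more concretely since the $[R_u,R_v]$ span $\f{der}(\mcal J)$ and $[R_u,R_v](\mcal J_0)$ cannot be zero—this can be read off the explicit formulas appearing in the proof of Lemma~\ref{le_eltos}).

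Next I would handle the six bracket identities in \eqref{eq_varioscorchetes}. The $\mcal D$-$\mcal D$ and $\mcal D$-$\mcal M$ rows follow from the compatibility of $\Gamma_{\f{g}_2}$ with $\Gamma_{\mcal O}$ already stated below \eqref{eq_gradG2}, namely $d(e_j)\in\mcal O_{g_{i*j}}$ and $d(e_j)=0$ when $i=j$: this gives $[\mcal D_i,\mcal M_j]\subseteq\mcal M_{i*j}$ with equality since $\mcal D_i$ acts surjectively $\mcal O_{g_j}\to\mcal O_{g_{i*j}}$ (because $\f{der}(\mcal O)$ is graded by $\Gamma_{\f g_2}$ and $[\mcal D_i,\mcal D_j]=\mcal D_{i*j}\neq0$ for $i\neq j$ by Lemma~\ref{le_simetrico}), and $[\mcal D_i,\mcal D_i]=0$, $[\mcal D_i,\mcal M_i]=0$ because $d(e_i)=0$. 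For the $\mcal M$-$\mcal M$ row, use the last line of \eqref{eq_TitsProduct}: when $i=j$, $[e_i,e_i]=0$ and $t_{\mcal C}(e_i^2)=t_{\mcal C}(-1)=-2\neq0$, so $[\mcal M_i,\mcal M_i]=\f{der}(\mcal J)=\mcal L_0$ (the trace term contributes $D_{e_i,e_i}=0$, and $[R_u,R_v]$ span $\f{der}(\mcal J)$); when $i\neq j$, $[e_i,e_j]=\pm2e_{i*j}$ and $t_{\mcal C}(e_ie_j)=0$, so $[\mcal M_i,\mcal M_j]=\mathbb{F}D_{e_i,e_j}\oplus\mcal M_{i*j}$, which is properly contained in $\mcal L_{i*j}=\mcal D_{i*j}\oplus\mcal M_{i*j}$ because $\mathbb{F}D_{e_i,e_j}$ is only one-dimensional whereas $\mcal D_{i*j}$ is two-dimensional.

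Finally, the four centraliser statements (a)--(d). Item (a) is $\f{z}(\f{der}(\mcal J))=0$, immediate from simplicity of $\mcal L_0$. Item (b): $x\in\mcal L_0$ with $[x,\mcal M_j]=0$ means $e_j\otimes x(u)=0$ for all $u\in\mcal J_0$, i.e. $x$ annihilates $\mcal J_0$, hence $x=0$ since $\mcal J_0$ generates $\mcal J$. Item (c): write $x=d+e_j\otimes u$ with $d\in\mcal D_j$, $u\in\mcal J_0$; then $[x,\mcal D_j]=[e_j\otimes u,\mcal D_j]=0$ automatically, and $[x,\mcal M_j]=[e_j\otimes u,\mcal M_j]=[\mcal M_j,\mcal M_j]$-type terms which force $u=0$ by the same linear-independence computations used in Lemma~\ref{le_eltos} (taking $z=e_j\otimes v$ with suitable $v$ so that $u*v\neq0$ or $t$-term survives), giving $x=d\in\mcal D_j$; conversely every $d\in\mcal D_j$ centralises $\mcal L_j=\mcal D_j\oplus\mcal M_j$ since $[\mcal D_j,\mcal D_j]=0=[\mcal D_j,\mcal M_j]$. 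Item (d): $x=d+e_j\otimes u\in\mcal L_j$ with $[x,\mcal L_k]=0$; testing against $\mcal D_k$ gives $[d,\mcal D_k]=0$ forcing $d=0$ (as $[\mcal D_j,\mcal D_k]=\mcal D_{j*k}\neq0$ and $\mcal D_j$ is two-dimensional acting faithfully), and testing against $\mcal M_k$ gives $[e_j\otimes u,\mcal M_k]=0$ which by the $\mcal M$-$\mcal M$ formula forces both $[e_j,e_k]\otimes(u*v)=0$ and $t_{\mcal C}(e_je_k)[R_u,R_v]=0$ for all $v$; since $t_{\mcal C}(e_je_k)=0$ the second is vacuous, but $[e_j,e_k]\neq0$ and $u*\mcal J_0=0$ forces $u=0$, so $x=0$.

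The main obstacle I expect is item (c) and the surviving cases of (d): ruling out $u$ requires knowing precisely which products $u*v$, $[R_u,R_v]$, $t_{\mcal C}(ab)[R_u,R_v]$ can vanish identically in the second slot, which is really the statement that the module actions are nondegenerate. This is exactly the content packaged in Lemma~\ref{le_eltos}'s explicit matrix computations, so the cleanest route is to reuse those, or else to invoke irreducibility of $\mcal J_0$ as a $\f{der}(\mcal J)$-module and the fact that $\mcal J_0*\mcal J_0=\mcal J_0$ (stated earlier); everything else is bookkeeping with \eqref{eq_TitsProduct} and the Fano-plane sign conventions.
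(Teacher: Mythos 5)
Your overall strategy coincides with the paper's: unpack Eq.~\eqref{eq_TitsProduct} case by case, using the compatibility of $\Gamma_{\mathfrak{g}_2}$ with $\Gamma_{\mathcal O}$ for the $\mathcal D$-rows and the explicit bracket for the $\mathcal M$-rows. Most steps are fine, but there is one genuine gap: the equality $[\mathcal M_i,\mathcal M_j]=\mathbb{F}D_{e_i,e_j}\oplus\mathcal M_{i*j}$ for $i\ne j$. The formula $[e_i\otimes u,e_j\otimes v]=\tfrac13\mathrm{tr}(u\cdot v)\,D_{e_i,e_j}+2e_{i*j}\otimes(u*v)$ only yields the inclusion $\subseteq$; every individual bracket is a \emph{coupled} combination of the two summands, so a priori the span could be a proper subspace (for instance the graph of a map $\mathcal M_{i*j}\to\mathbb{F}D_{e_i,e_j}$). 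You assert equality without decoupling the two terms. The paper closes this by an explicit computation, $[e_i\otimes(e_{11}-e_{33}),\,e_j\otimes(e_{22}-e_{33})]+[e_i\otimes e_{12}^+,\,e_j\otimes e_{12}^+]=D_{e_i,e_j}$, which isolates $D_{e_i,e_j}$ inside $[\mathcal M_i,\mathcal M_j]$; combined with $\mathcal J_0*\mathcal J_0=\mathcal J_0$ this gives all of $\mathcal M_{i*j}$ as well. Since the equality is used later (e.g.\ in the derived-series computations of Section~\ref{se_propiedades}), this step cannot be omitted.

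A second, smaller issue is item (c). What is actually needed is: if $u\in\mathcal J_0$ and $[R_u,R_v]=0$ for all $v\in\mathcal J_0$, then $u=0$. Lemma~\ref{le_eltos} does not deliver this directly --- it establishes linear independence of particular double brackets, not injectivity of $u\mapsto[R_u,R_{\,\cdot\,}]$ --- and your description of ``taking $z=e_j\otimes v$ so that $u*v\ne0$ or the $t$-term survives'' conflates the $i=j$ bracket (which has neither a $u*v$ nor a trace term) with the $i\ne j$ one. The paper's route is short: $[R_u,R_v](w)=(v,w,u)^{\cdot}$, so the hypothesis says $u$ associates with all of $\mathcal J$, forcing $u\in\mathbb{F}1$, and tracelessness gives $u=0$. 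Your fallback via irreducibility of $\mathcal J_0$ is salvageable but would need to be spelled out. Items (a), (b), (d) and the remaining rows match the paper's argument in substance; for (d) the paper simply tests against one $d'\in\mathcal D_k$ with $d'(e_j)\ne0$ and uses that $\mathcal D_{j*k}\oplus\mathcal M_{j*k}$ is a direct sum, which avoids the extra claim ``$u*\mathcal J_0=0\Rightarrow u=0$'' that your version of (d) leaves unproved.
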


 \begin{proof}
 The first statements are straightforward, always keeping in mind the product given in~\eqref{eq_TitsModel}. To prove~\eqref{eq_varioscorchetes}, recall that the identities
\(
[\mathcal{D}_i, \mathcal{D}_i] = 0\) and \( [\mathcal{D}_i, \mathcal{D}_j] = \mathcal{D}_{i \ast j}
\)
are well known, see for instance~\cite{elduque2013gradings_simple_lie} or~\cite{draper2024gradedb3d4}. Since \( d(e_i) = 0 \) for every \( d \in \mathcal{D}_i \), it follows that
\(
[\mathcal{D}_i, \mathcal{M}_i] = 0.
\)
For any \( j \ne i \), there exists \( d \in \mathcal{D}_i \) such that \( d(e_j) \ne 0 \), so that 
\(
[\mathcal{D}_i, \mathcal{M}_j] = \mathcal{M}_{i \ast j}.
\)
 Next, we consider the brackets  $[\mcal M_i,\mcal M_i]$. For any $u,v \in \mcal J_0$ we have
\[
[e_i \otimes u, e_i \otimes v] = 2t_{\mathcal{C}}(e_i^2)[R_u, R_v] = -4[R_u, R_v] \in \f{der}(\mcal J) = \mcal L_0.
\]
 Since the space $\mcal L_0 = \f{der}(\mcal J)$ is spanned by such brackets $[R_u, R_v]$ (with $u, v \in \mcal J_0$), we conclude that $[\mcal M_i, \mcal M_i] = \mcal L_0$.
Now, for $i \ne j$, we can always choose the ordering so that $e_i e_j = e_{i \ast j}$. From the multiplication rule~\eqref{eq_TitsProduct}, we obtain:
\begin{equation}\label{eq_lemita}
[e_i \otimes u, e_j \otimes v] = \tfrac{1}{3} \mathrm{tr}(u \cdot v)\, D_{e_i,e_j} + 2e_{i \ast j} \otimes (u \ast v) \in \mathbb{F} D_{e_i,e_j} \oplus \mcal M_{i \ast j}.
\end{equation}
To show the converse inclusion, take $u,v \in \mcal J_0$ as in the proof of Lemma~\ref{le_eltos}, and compute:
\[
[e_i \otimes (e_{11} - e_{33}),\ e_j \otimes (e_{22} - e_{33})] + [e_i \otimes e_{12}^+,\ e_j \otimes e_{12}^+] = D_{e_i,e_j}.
\]
This gives $D_{e_i,e_j} \in [\mcal M_i, \mcal M_j]$. Together with~\eqref{eq_lemita}, this implies that $e_{i \ast j} \otimes (u \ast v) \in [\mcal M_i, \mcal M_j]$ for all $u,v \in \mcal J_0$. Since $\mcal J_0 \ast \mcal J_0 = \mcal J_0$, it follows that
\(
\mcal M_{i \ast j} = e_{i \ast j} \otimes \mcal J_0 \subseteq [\mcal M_i, \mcal M_j],
\)
which finishes the proof of~\eqref{eq_varioscorchetes}.

The claims  (a) to (d) are relatively clear. Assertion (a), for instance, follows from 
 the fact that $\f{z}(\mcal L_0) = 0$, since $\mcal L_0$ is a simple Lie algebra. 
For (b), take $0 \ne D \in \f{der}(\mcal J)= \mcal L_0$. Then there exists $u \in \mcal J$ such that $D(u) \ne 0$. We may assume $u \in \mcal J_0$, because $D(1) = 0$. Hence, $[D, e_j \otimes u] = e_j \otimes D(u) \ne 0$.
To prove (c), suppose $[e_j \otimes u,\ \mcal L_j] = 0$. In particular, $[e_j \otimes u,\ e_j \otimes v] = -4[R_u, R_v] = 0$ for all $v \in \mcal J_0$. That is, $(u, \mcal J_0, \mcal J_0) = 0$, so $u$ associates with all elements of $\mcal J$. This forces $u = 0$, since the only elements in $\mcal J$ that associate with all others are scalar multiples of the identity, and $u \in \mcal J_0$ is traceless.
Finally, for (d), suppose $d \in \mcal D_j$ and $u \in \mcal J_0$ satisfy $[d + e_j \otimes u,\ \mcal L_k] = 0$. Take $d' \in \mcal D_k\subseteq \mcal L_k$ such that $d'(e_j) \ne 0$. Since
\(
[d,\ d'] + d'(e_j) \otimes u = 0,
\)
then $u = 0$. It follows that $[d,\ \mcal D_k] = 0$, and hence $d = 0$.
 \end{proof}
 
 Now we study some properties of the graded contraction \( \varepsilon^T \) associated with a generalised nice set \( T \). 
 This analysis will help us to distinguish how many non-isomorphic algebras arise (at most 245, a priori, since collineations induce automorphisms via Eq.~\eqref{eq_extendiendo_colin_a_iso}), and to illustrate the diversity of Lie algebras appearing in this family.
 For brevity, we denote by $\mathfrak{z} = \mathfrak{z}(\mathcal{L}_T)$ the center, by $\mathfrak{r} = \mathfrak{r}(\mathcal{L}_T)$ the radical, and by $\mathcal{L}_{ss}$ the Levi subalgebra of the Lie algebra $\mathcal{L}_T = \mathfrak{r} \rtimes \mathcal{L}_{ss}$ (where \emph{ss} stands for \emph{semisimple}). 
Note that if $\mathcal{L}_T$ admits a vector space decomposition $\mathcal{L}_T = R \oplus L$, where $L$ is a semisimple Lie subalgebra and $R$ is a solvable ideal, then necessarily $R = \mathfrak{r}(\mathcal{L}_T)$ and $L = \mathcal{L}_{ss}$.
Let us write $\ell = \dim \mathcal{C}$, so that $3\ell + 2 = \dim \mathcal{M}_i$ and $\tilde{\ell} := 3\ell + 4 = \dim \mathcal{L}_i$. 
The results below are organized according to the examples of generalised nice sets described in Section~\ref{se_GNS}.

We begin by studying the generalised nice sets contained in \( X \) (and therefore nice sets).  
Recall from Example~\ref{ex_nice=GNS_losSi} that these are given by \( T = S_i \), for \( i = 0, \dots, 13 \).
Let us define the following index sets:
\[ 
K_i := \{ j \in I : \{k, j\} \in S_i \text{ for some } k \},  \qquad \color{black}
J_i := \{ k * j : \{k, j\} \in S_i \}.
\]
These subsets of \( I \) are closely related to the center and the derived algebra of \( \mathcal{L}_T \).
We can determine directly the sets \( K_i \) and \( J_i \) associated to each generalised nice set \( S_i \subseteq X \) as follows:

\begin{table}[h]
    \centering
    \begin{tabular}{|c||c|c|c|c|c|c|c|c|c|c|c|c|}
        \hline
        $i$ & 0 &1 & 2 & 3 & 4 & 5 & 6 & 7,\,10 & 8 & 9,\ 11 & 12 & 13 \\
        \hline
        $K_i$ &$\emptyset$& 12 & 123 & 1267 & 1234 & 1237 & 126 & 1267 & 234567 & 1267 & 3467 & 123567 \\
        \hline
        $J_i$ &$\emptyset$& 5 & 56 & 5 & 567 & 456 & 345 & 35 & 1 &345 & 125 & 4567 \\
        \hline
    \end{tabular}
    \vspace{0.18em} 
    \caption{Sets \( K_i \) and \( J_i \) associated to the generalised nice sets \( S_i \subseteq X \)}
    \label{tab:table_KiJi}
\end{table}

\begin{prop}\label{pr_casolosSi}
    If \( T \) is a generalised nice set contained in \( X \), that is, \( T = S_i \) as in Example~\ref{ex_nice=GNS_losSi}, then
    \[
        \mathfrak{z} = \mathfrak{r} = \mcal L_0 \oplus \bigoplus_{k \in I\setminus K_i} \mcal L_k, \qquad
        [\mcal L_T, \mcal L_T] = \bigoplus_{k \in J_i} \mcal L_k.
    \]
    \begin{itemize}
        \item If \( T = S_0 = \emptyset \), then \( \mcal L_T \) is abelian.
        \item If \( T = S_i \) for some \( i \ne 0 \), then \( \mcal L_T \) is:
        \begin{itemize}
            \item 2-step solvable for all \( i \ne 0 \);
            \item 2-step nilpotent for all \( i \ne 0, 13 \);
            \item 3-step nilpotent if \( i = 13 \).
        \end{itemize}
         \end{itemize}
    Among these algebras, all are pairwise non-isomorphic except in one case: \( \mcal L_{S_4} \cong \mcal L_{S_5} \).
\end{prop}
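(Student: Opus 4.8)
The plan is to translate the contraction into explicit brackets and then read off everything. Since $S_i\subseteq X$, the pairs $\{0,j\}$ and $\{j,j\}$ never lie in $S_i$, so $\varepsilon^{S_i}_{0j}=0=\varepsilon^{S_i}_{jj}$; hence in $\mathcal{L}_T$ one has $[\mathcal{L}_0,\mathcal{L}_j]=0$ for every $j\in I_0$ (so $\mathcal{L}_0$ is central), $[\mathcal{L}_a,\mathcal{L}_a]=0$ for $a\in I$, and, for distinct $a,b\in I$, $[\mathcal{L}_a,\mathcal{L}_b]=\mathcal{L}_{a*b}$ (the whole homogeneous component, by Lemma~\ref{le_comovanloscorchetes}) if $\{a,b\}\in S_i$ and $[\mathcal{L}_a,\mathcal{L}_b]=0$ otherwise. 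Summing, $[\mathcal{L}_T,\mathcal{L}_T]=\sum_{\{a,b\}\in S_i}\mathcal{L}_{a*b}=\bigoplus_{k\in J_i}\mathcal{L}_k$ (several pairs of $S_i$ may land in the same component, so the distinct summands are indexed by $J_i=\{a*b:\{a,b\}\in S_i\}$), which is the second displayed identity.

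For the center, $\mathfrak{z}$ is a graded subspace, so $\mathfrak{z}=\bigoplus_{k\in I_0}(\mathfrak{z}\cap\mathcal{L}_k)$; we have $\mathcal{L}_0\subseteq\mathfrak{z}$, and for $k\in I$ an element of $\mathcal{L}_k$ is central iff it is killed by bracketing with $\mathcal{L}_j$ for every $j$ with $\{k,j\}\in S_i$ (necessarily $j\ne k$), which by Lemma~\ref{le_comovanloscorchetes}(d) forces it to vanish as soon as such a $j$ exists, i.e.\ as soon as $k\in K_i$. Thus $\mathfrak{z}=\mathcal{L}_0\oplus\bigoplus_{k\in I\setminus K_i}\mathcal{L}_k$; since $\mathcal{L}_T$ is solvable (next step), $\mathfrak{r}=\mathcal{L}_T$, equal to $\mathfrak{z}$ exactly when $\mathcal{L}_T$ is abelian, i.e.\ when $T=S_0$ (and then $K_0=\varnothing$). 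Iterating, $\mathcal{L}_T^{(2)}=\sum\{\mathcal{L}_{a*b}:\{a,b\}\in S_i,\ a,b\in J_i\}$, and a glance at the list of the $S_i$ (via Table~\ref{tab:table_KiJi}) shows no pair of $S_i$ lies inside $J_i$, so $\mathcal{L}_T^{(2)}=0$: abelian for $i=0$, $2$-step solvable otherwise. For the lower central series, $\mathcal{L}_T^{2}=\sum\{\mathcal{L}_{j*k}:k\in J_i,\ \{j,k\}\in S_i\}$; for $i\notin\{0,13\}$ no edge of $S_i$ meets $J_i$, so $\mathcal{L}_T^{2}=0$ and $\mathcal{L}_T$ is $2$-step nilpotent, whereas for $i=13$, with $S_{13}=P_{\{1,2,3\}}$ and $J_{13}=\{4,5,6,7\}$, the edges $\{1,7\},\{2,6\},\{3,5\}$ of $S_{13}$ all have product $4$, so $\mathcal{L}_T^{2}=\mathcal{L}_4\ne 0$ while $\mathcal{L}_T^{3}=[\mathcal{L}_T,\mathcal{L}_4]=0$ (no edge of $S_{13}$ contains $4$), making $\mathcal{L}_{S_{13}}$ $3$-step nilpotent.

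For the isomorphism statement I would first record the invariants $\dim\mathcal{L}_T$ (depending only on $\mathcal{C}$), $\dim\mathfrak{z}=\dim\mathcal{L}_0+(7-|K_i|)\tilde{\ell}$, $\dim[\mathcal{L}_T,\mathcal{L}_T]=|J_i|\tilde{\ell}$ and the nilpotency class: $\mathcal{L}_{S_0}$ is the only abelian one, $\mathcal{L}_{S_{13}}$ the only $3$-step nilpotent one, and among the $2$-step nilpotent $\mathcal{L}_{S_i}$ ($1\le i\le12$) the pair $(|K_i|,|J_i|)$ separates all except the family $\{S_4,S_5,S_9,S_{11},S_{12}\}$ (with $(4,3)$) and the pair $\{S_7,S_{10}\}$ (with $(4,2)$). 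For those I would use a finer invariant: for a $2$-step nilpotent $\mathfrak{n}$ with $\mathfrak{n}'\subseteq\mathfrak{z}$, set $V=\mathfrak{n}/\mathfrak{z}$, $W=\mathfrak{n}'$, so the commutator gives $\lambda^{*}\colon W^{*}\hookrightarrow\Lambda^{2}V^{*}$, and let $\mu(\mathfrak{n})$ be the largest dimension of a linear subspace of $\lambda^{*}(W^{*})$ whose nonzero elements are alternating $2$-forms of rank at most $\tfrac12\dim V$ on $V$; this is manifestly an isomorphism invariant. For our algebras $V=\bigoplus_{k\in K_i}\mathcal{L}_k$, $W=\bigoplus_{k\in J_i}\mathcal{L}_k$, and the block $\mathcal{L}_k^{*}\subseteq W^{*}$ contributes forms pairing $\mathcal{L}_a$ with $\mathcal{L}_b$ (non-degenerately, via the Tits bracket) over the $\{a,b\}\in S_i$ with $a*b=k$, so such a form has rank $2\tilde{\ell}$ when exactly one such pair occurs and rank $4\tilde{\ell}=\dim V$ when two do; a routine check then gives $\mu=3\tilde{\ell}$ for $S_4$ and $S_5$, $\mu=2\tilde{\ell}$ for $S_9$, $\mu=\tilde{\ell}$ for $S_{11}$, $\mu=0$ for $S_{12}$, and $\mu=\tilde{\ell}$ for $S_7$ against $\mu=0$ for $S_{10}$. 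Hence the only possible coincidence is $\mathcal{L}_{S_4}\cong\mathcal{L}_{S_5}$.

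This isomorphism does hold, and establishing it cleanly is the main obstacle. Both $\mathcal{L}_{S_4}$ and $\mathcal{L}_{S_5}$ are $2$-step nilpotent with $\mathfrak{z}=\mathcal{L}_0\oplus\mathcal{L}_T'$, and modulo $\mathfrak{z}$ they consist of a ``hub'' component $\mathcal{L}_1$ together with three pairwise-commuting components bracketing onto three distinct central components via the Tits brackets along the Fano lines $\{1,a,1*a\}$. Since $S_*(I)$ is transitive on ordered pairs of collinear points, the automorphisms $f_\sigma$ of \eqref{eq_extendiendo_colin_a_iso} identify, as abstract bilinear maps, the Tits bracket $\mathcal{L}_a\times\mathcal{L}_b\to\mathcal{L}_{a*b}$ of one Fano line with that of any other; building the evident common $2$-step nilpotent ``model'' from one hub, three leaves, three targets and the central $\mathcal{L}_0$, and realizing it from $\mathcal{L}_{S_4}$ via the indices $(1;2,3,4;5,6,7)$ and from $\mathcal{L}_{S_5}$ via $(1;2,3,7;5,6,4)$, gives $\mathcal{L}_{S_4}\cong\mathcal{L}_{S_5}$. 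The delicate part is that the three hub--leaf bilinear maps must be realized with a \emph{single} fixed identification of the hub component, so one has to track the $f_\sigma$'s carefully rather than just permuting indices; and one should note that no collineation sends $S_4$ to $S_5$ (the leaves $\{2,3,4\}$ of $S_4$ form a generating triplet while the leaves $\{2,3,7\}$ of $S_5$ form a line), so the isomorphism is not induced by the Weyl group. The remaining work—the case-by-case verification of the $K_i$, $J_i$ and the values of $\mu$—is routine bookkeeping once the explicit brackets of the first paragraph are in hand.
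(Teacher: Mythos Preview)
Your structural computations (center, derived algebra, solvability and nilpotency indices) are correct and essentially identical to the paper's: both argue via Table~\ref{tab:table_KiJi}, Lemma~\ref{le_comovanloscorchetes}(d), and the observation $K_i\cap J_i=\emptyset$ for $i\ne13$. You are also right that $\mathfrak{r}=\mathcal{L}_T$ since the algebra is solvable; the displayed equality $\mathfrak{z}=\mathfrak{r}$ in the statement is a slip that the paper's own proof does not revisit.

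The two proofs diverge on the isomorphism question. For the \emph{non}-isomorphism the paper builds no new invariants: it simply notes that $\mathcal{D}=\mathfrak{der}(\mathcal{O})$ is a graded subalgebra of each $\mathcal{L}_{S_i}$ and invokes \cite[Theorem~3.27]{draper2024gradedg2}, where the $\mathcal{D}^{\varepsilon^{S_i}}$ were already distinguished (except $i=4,5$). Your self-contained alternative via $(|K_i|,|J_i|)$ and the rank invariant $\mu$ is attractive, but there is a genuine gap: the asserted ``non-degeneracy'' of the Tits bracket fails in the sense you need. For distinct $a,b\in I$ the evaluation $\mathcal{D}_a\to\mathbb{F}e_{a*b}$, $d\mapsto d(e_b)$, has rank at most~$1$, so some nonzero $d\in\mathcal{D}_a$ satisfies $[d,\mathcal{M}_b]=0$; hence for many $\phi\in\mathcal{L}_{a*b}^*$ the form $(x,y)\mapsto\phi([x,y])$ on $\mathcal{L}_a\times\mathcal{L}_b$ is degenerate, and your rank counts (and thus the claimed values of $\mu$) are not ``routine''.

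For $\mathcal{L}_{S_4}\cong\mathcal{L}_{S_5}$ the paper is explicit rather than conceptual: it fixes elements $x_i,y_i\in\mathcal{D}_i$ ($i=1,4,7$) with controlled brackets, defines $\tilde\varphi$ to be the identity on $\mathcal{L}_k$ for $k\in\{0,1,2,3,5,6\}$ and to send $\mathcal{L}_4\leftrightarrow\mathcal{L}_7$ (with a sign) via $x_4\mapsto-x_7$, $y_4\mapsto-y_7$, $e_4\otimes u\mapsto-e_7\otimes u$, etc., and then checks the bracket directly using $d(e_4e_7)=0$ for $d\in\mathcal{D}_1$ and $D_{e_i,e_k}(e_i)=4e_k$. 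This is exactly the ``single fixed identification of the hub'' that you flagged as the delicate point but left uncompleted.
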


\begin{proof}
The center is a homogeneous subspace, 
\( \mathfrak{z}(\mcal L_T) = \bigoplus_{k \in I_0} \mcal L_k \cap \mathfrak{z}(\mcal L_T) \). 
Now, if \( k \notin K_i \), then \( kj \notin T \) for all \( j \), so \( \varepsilon^T_{kj} = 0 \). Hence, 
\( [\mcal L_k, \mcal L_j]^{\varepsilon^T} = \varepsilon^T_{kj} [\mcal L_k, \mcal L_j] = 0 \), 
meaning \( \mcal L_k \subseteq \mathfrak{z}(\mcal L_T) \). In particular, this applies to \( \mcal L_0 \), since \( 0 \notin K_i \). 
Conversely, suppose \( k \in K_i \). Then there exists \( j \in I \) such that \( kj \in T \). 
By Lemma~\ref{le_comovanloscorchetes}(d), we know that any nonzero \( x \in \mcal L_k \) satisfies \( [x,\mcal L_j] \ne 0 \). 
Since \( \varepsilon^T_{kj} \ne 0 \), we also have \( [x,\mcal L_j]^{\varepsilon^T} \ne 0 \), so \( x \notin \mathfrak{z}(\mcal L_T) \). 
That is, \( \mcal L_k \cap \mathfrak{z}(\mcal L_T) = 0 \), and hence 
\( \mathfrak{z}(\mcal L_T) = \mcal L_0 \oplus \bigoplus_{k \in I \setminus K_i} \mcal L_k \).

Next we focus on solvability and nilpotency. Observe in Table~\ref{tab:table_KiJi} that \( K_i \cap J_i = \emptyset \) for all \( i \ne 13 \). This means that \( J_i \subseteq I \setminus K_i \), so that 
\(
[\mcal L_T, \mcal L_T] \subseteq \f z,
\)
and consequently
\(
(\mcal L_T)^2 = [\mcal L_T, [\mcal L_T, \mcal L_T]] = 0:
\)
all these algebras $\mcal L_{S_i}$, \( i \ne 13 \), are nilpotent of index 2. (In particular, they are solvable as well, and abelian only for \( i = 0 \).)
The case \( i = 13 \) differs from the rest in that \( K_{13} = I \setminus\{4\} \) intersects \( J_{13} = \{4,5,6,7\} \), so that
the derived algebra
$[\mcal L_T, \mcal L_T] $ is not central. We compute
\(
[\mcal L_T, [\mcal L_T, \mcal L_T]] = \mcal L_4 \subseteq \f z.
\)
This implies that the first vanishing term in the lower central series is \( (\mcal L_T)^3 = 0 \). 
It is also immediate that \( (\mathcal{L}_T)^{(2)} = 0 \), as there are no indices \( k, j \in J_{13} \) for which \( kj \in S_{13} \).

  It remains to determine for which indices the corresponding algebras are graded-isomorphic. 
Recall that the Lie algebras  \( \mcal D^{\varepsilon^{S_i}} \) associated to the graded contractions \( \varepsilon^{S_i} \), when applied to the grading on \( \mcal D = \f{der}(\mcal O) \), were proved to be pairwise non-isomorphic in \cite[Theorem~3.27]{draper2024gradedg2}, 
except for the following case: \( S_4 = \{12,13,14\} \) and \( S_5 = \{12,13,17\} \) (corresponding to \( T_8 \) and \( T_{10} \) in that work). 
Since \( \mcal D \) is a graded subalgebra of \( \mcal L \), it follows that the Lie algebras \( \mcal L_{S_i} \) and \( \mcal L_{S_j} \) are also non-isomorphic whenever \( i \ne j \), except possibly when \( \{i,j\} = \{4,5\} \). 
On the other hand, showing that \( \mcal L_{S_4} \cong \mcal L_{S_5} \) requires providing an explicit graded isomorphism between the two, and cannot be deduced directly from \cite{draper2024gradedg2}.

    Take the three derivations $x_i\in\f{der}(\mathcal{O})$ determined by $x_i(e_1)=0=x_i(e_4)$ and $x_i(e_3)=\frac12e_ie_3$ for $i=1,7,4$. (Since $\{1,3,4\}$ is a generating triplet, this uniquely determines the derivation.) Take also 
    $$
    y_1=\frac14D_{e_7,e_4},\quad
      y_7=\frac14D_{e_4,e_1},\quad
     y_4=\frac14D_{e_1,e_7}.
    $$
    Note that $\mcal D_i=\langle x_i,y_i\rangle$ for any $i=1,7,4$. Moreover,
    $[x_i,x_j]=x_{i*j}$, $[x_i,y_j]=0$, $[y_i,y_j]=y_{i*j}$ for the ordered pairs $(i,j)=(1,7),(7,4),(4,1)$. 
    As in \cite[Eq.~(16)]{draper2024gradedg2}, the map $\varphi\colon\mcal D\to\mcal D$ given by $\varphi\vert_{\mcal D_k}=\id$  for $k=2,3,5,6$ and 
    $$
    \begin{array}{ccc}
    \varphi(x_1)=x_1,  &\varphi(x_7)=x_4,&\varphi(x_4)=-x_7,\\
     \varphi(y_1)=y_1, & \varphi(y_7)=y_4,&\varphi(y_4)=-y_7,
     \end{array}
    $$
    is a graded isomorphism $\varphi\colon \mcal D^{\ep_{S_4}}\to\mcal D^{\ep_{S_5}}$.
    Our mission is to extend this isomorphism 
    to a graded isomorphism $\tilde \varphi\colon \mcal L_{ {S_4}}\to\mcal L_{ {S_5}}$. 
    We simply consider $\tilde \varphi\vert_{\mcal D}=\varphi$, $\tilde \varphi\vert_{\f{der}(\mcal J)}=\id$, 
    and $\forall u\in\mcal J_0$,
    $$
    \tilde \varphi(e_7\otimes u)=e_4\otimes u,\quad
    \tilde \varphi(e_4\otimes u)=-e_7\otimes u,\quad
\tilde \varphi(e_k\otimes u)=e_k\otimes u \quad\forall k=1,2,3,5,6.
    $$
    We can verify that this defines the required isomorphism by direct substitution in Eq.~\eqref{eq_TitsProduct}. 
The verification is straightforward, taking into account the following facts: 
the Lie bracket is only non-zero for homogeneous elements whose degrees belong to the supports \( S_4 \) and \( S_5 \); 
for any \( d \in \mcal D_1 \), we have \( d(e_1) = d(e_4 e_7) = 0 \), so there exists \( \alpha \in \mathbb{F} \) such that \( d(e_7) = \alpha e_4 \); 
moreover, \( d(e_4) = -\alpha e_7 \); 
and finally, \( D_{e_i, e_k}(e_i) = 4e_k \).
 \end{proof}
 
 The results obtained so far do not differ substantially from those in the case of \( \mathfrak{g}_2 \) studied in~\cite{draper2024gradedg2}, except in quantity. However, this similarity does not carry over to the other generalised nice sets. In particular, many of the algebras arising in the next families will no longer be solvable; that is, they will have a nontrivial semisimple part \( \mathcal{L}_{ss} \ne 0 \).

Let us proceed by following  the examples in Section~\ref{se_GNS}. 
Example~\ref{ex_outofX_EFyP} introduces three types of generalised nice sets contained in 
\( X_0 \setminus X \), namely:
\[
E_J := \{jj : j \in J\}, \qquad 
F_J := \{00, 0j : j \in J\}, \qquad 
P_{\{0,j,j\}} := \{00, 0j, jj\},
\]
for subsets \( J \subseteq I \) and \( j \in I \). We will show that the structure of the Levi factor allows us to distinguish among  these three cases.
(To be precise, it will be \( \mathcal{L}_{ss}= 0, \f{der}(\mcal J) \) and $\f{innstr}(\mcal J)$, respectively.)

 \begin{prop}\label{pr_losEsyFs}
    Let $\emptyset \ne J \subseteq I$. 
     \begin{itemize}
         \item If $T=E_J$, then $\mcal L_T=\f r$ is 2-step nilpotent and not reductive, since $\f z=\f{der}(\mcal J)\oplus\f{der}(\mcal O)\oplus \sum_{j\in I\setminus J}\mcal M_j.$
          \item If $T=F_{\emptyset}=\{00\}$, then $\mcal L_T$ is   reductive, with Levi factor $\mcal L_{ss}=\mcal L_0\cong\f{der}(\mcal J)$ and radical $\f r=\sum_{i\in I}\mcal L_i=\mcal D\oplus \mcal M=\f z$, an abelian algebra. 
          \item If $T=F_J$, then $\mcal L_T$ is not  reductive, with Levi factor $\mcal L_{ss}=\mcal L_0\cong\f{der}(\mcal J)$ and radical $\f r=\sum_{i\in I}\mcal L_i=\mcal D\oplus \mcal M$, an abelian algebra. Here $\f z=\f{der}(\mcal O)\oplus\sum_{j\in I\setminus J}\mcal M_j$.
     \end{itemize}
     Moreover, 
     $\mcal L_{E_J}\cong \mcal L_{E_K}$ if and only if $|J|=|K|$, $\mcal L_{F_J}\cong \mcal L_{F_K}$ if and only if $|J|=|K|$, and $\mcal L_{E_J}\cong \mcal L_{F_K}$  cannot occur. 
 \end{prop}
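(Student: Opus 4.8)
The plan is to read off the entire structure of $\mcal L_T$ from the short list of brackets that survive the contraction. By Proposition~\ref{prop_ep^T} and the description of the bracket of $\mcal L=\mcal T(\mcal C)$, one has $[\mcal L_g,\mcal L_h]^{\ep^T}=[\mcal L_g,\mcal L_h]$ if $\{g,h\}\in T$ and $[\mcal L_g,\mcal L_h]^{\ep^T}=0$ otherwise. Hence for $T=E_J$ the only nonzero products are $[\mcal L_j,\mcal L_j]^{\ep^T}=[\mcal M_j,\mcal M_j]=\mcal L_0$ for $j\in J$ (using $[\mcal D_j,\mcal L_j]=0$ and Lemma~\ref{le_comovanloscorchetes}), while for $T=F_J$ they are $[\mcal L_0,\mcal L_0]=\mcal L_0$ together with $[\mcal L_0,\mcal L_j]=\mcal M_j$ for $j\in J$. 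From this, solvability and the lower central/derived series are immediate: for $E_J$ we get $(\mcal L_T)^1=\mcal L_0$ and $(\mcal L_T)^2=[\mcal L_T,\mcal L_0]^{\ep^T}=0$, since no pair of $E_J$ contains the index $0$; thus $\mcal L_T$ is $2$-step nilpotent and equals its own radical. For $F_J$ one sees at once that $\mcal L_0=\f{der}(\mcal J)$ is a simple subalgebra and that $R:=\mcal D\oplus\mcal M=\bigoplus_{i\in I}\mcal L_i$ is an abelian ideal; since $\mcal L_T=R\oplus\mcal L_0$ with $R$ a solvable ideal and $\mcal L_0$ semisimple, necessarily $\f r=R$ and $\mcal L_{ss}=\mcal L_0$.

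Next I would compute the centres using that $\f z$ is a homogeneous subspace. For $E_J$: $\mcal L_0$, each $\mcal D_j$, and each $\mcal M_j$ with $j\notin J$ are central because none of their brackets survive, and for $j\in J$ Lemma~\ref{le_comovanloscorchetes}(c), applied to $[\,\cdot\,,\mcal L_j]^{\ep^T}=[\,\cdot\,,\mcal L_j]$, gives $\mcal L_j\cap\f z=\mcal D_j$; this yields $\f z=\f{der}(\mcal J)\oplus\f{der}(\mcal O)\oplus\sum_{j\in I\setminus J}\mcal M_j$, a proper subspace since $J\neq\emptyset$, so $\mcal L_T$ is a non-abelian nilpotent algebra and in particular not reductive. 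For $F_\emptyset$ the action of $\mcal L_0$ on $R$ is trivial, so $\mcal L_T=\mcal L_0\oplus R$ is a direct sum of ideals, $\f r=R=\f z$ is abelian, and $\mcal L_T$ is reductive. For $F_J$ with $J\neq\emptyset$ the same homogeneity analysis gives $\mcal D\subseteq\f z$, and $\mcal M_j\subseteq\f z$ exactly when $j\notin J$ (for $j\in J$ one uses $[\mcal L_0,\mcal M_j]=\mcal M_j$ and the fact that $\f{der}(\mcal J)$ has no nonzero fixed vector in $\mcal J_0$, cf.\ the proof of Lemma~\ref{le_comovanloscorchetes}(c)), while $\mcal L_0\cap\f z=0$ because $\f{der}(\mcal J)$ is simple; hence $\f z=\f{der}(\mcal O)\oplus\sum_{j\in I\setminus J}\mcal M_j\subsetneq\f r$ and $\mcal L_T$ is not reductive.

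For the isomorphism statements I would argue as follows. If $|J|=|K|$, fix a bijection $\phi\colon J\to K$ and a bijection $\psi\colon I\setminus J\to I\setminus K$, and define $f\colon\mcal L_T\to\mcal L_T$ to be the identity on $\mcal L_0$ and on $\mcal D$, and the relabelling $e_j\otimes u\mapsto e_{\phi(j)}\otimes u$ on $\mcal M_j$ for $j\in J$ and $e_j\otimes u\mapsto e_{\psi(j)}\otimes u$ on $\mcal M_j$ for $j\notin J$. Because in $E_J$ (resp.\ $F_J$) the active components $\mcal M_j$, $j\in J$, interact only with one another through nothing and with $\mcal L_0$ through the same bracket formula, $f$ is a Lie algebra isomorphism $\mcal L_{E_J}\cong\mcal L_{E_K}$ (resp.\ $\mcal L_{F_J}\cong\mcal L_{F_K}$); note this map is in general not a \emph{graded} isomorphism, since for $|J|\in\{3,4\}$ the sets $J$ and $K$ need not lie in the same collineation orbit. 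Conversely, the dimension of the centre separates the values of $|J|$: in both families $\dim\f z=14+(7-|J|)(3\ell+2)$, increased by $\dim\f{der}(\mcal J)$ in the $E$-case, which is strictly decreasing in $|J|$; hence $|J|$ is an isomorphism invariant. Finally $\mcal L_{E_J}$ is nilpotent, whereas $\mcal L_{F_K}$ contains the simple subalgebra $\f{der}(\mcal J)$ and is therefore not solvable, so $\mcal L_{E_J}\not\cong\mcal L_{F_K}$.

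The routine part of all this is the bookkeeping with the brackets of Lemma~\ref{le_comovanloscorchetes}. The only step that calls for genuine (if modest) care is the explicit isomorphism in the last paragraph: one must observe that distinct active components $\mcal M_j$ do not interact with one another, so that the abstract isomorphism type of $\mcal L_{E_J}$, and of $\mcal L_{F_J}$, depends only on the cardinality $|J|$ --- a statement strictly stronger than invariance under collineations.
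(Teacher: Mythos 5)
Your proposal is correct and follows essentially the same route as the paper: read off the surviving brackets from Lemma~\ref{le_comovanloscorchetes}, deduce the center, lower central/derived series and Levi decomposition from them, use $\dim\mathfrak{z}$ (plus solvability versus a nonzero Levi factor) to separate isomorphism classes, and exhibit an explicit relabelling $e_j\otimes u\mapsto e_{\mu(j)}\otimes u$ for an arbitrary bijection $\mu$ with $\mu(J)=K$ to show $|J|=|K|$ suffices. The only cosmetic difference is that you fix $\mathcal{D}$ pointwise where the paper transports $\mathcal{D}_i$ to $\mathcal{D}_{\mu(i)}$; both work since $\mathcal{D}$ is central in these contractions.
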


 Clearly, the algebras \( \mathcal{L}_{E_J} \)'s and \( \mathcal{L}_{F_K} \)'s are very different: the former are solvable, whereas the latter are not ($\mcal L_{ss}\ne0$).
 
To avoid confusion, whenever we use the indices \( i, j, k \) in the subsequent pages, they will be understood to range over \( I \) (not over \( I_0 \)).

 \begin{proof}
 Assume first that $T=E_J$. Clearly $\mcal L_0 $ and $  \mcal L_j $ are contained in $ \f z $ if $j\notin J$,  
    since  $\ep^T_{jk}=0$ for all $k$.
    Also, if $j\in J$, $\mcal D_j\subseteq \f z$ by Lemma~\ref{le_comovanloscorchetes}, since $[\mcal D_j,\mcal L_j]=0$  and 
     $\ep^T_{jk}=0$ for all $k\ne j$. Besides, $\f z$ is a homogeneous subspace and, if $j\in J$, $\mcal L_j\cap \f z\subseteq \mcal D_j$ by item (c) in that lemma. Thus, the center equals $\f z=\mcal L_0\oplus \sum_{j\in I\setminus J}\mcal L_j\oplus \sum_{j\in J}\mcal D_j= \f{der}(\mcal J)\oplus\f{der}(\mcal C)\oplus \sum_{j\in I\setminus J}\mcal M_j$. 
     If we compute the derived algebra, $[\mcal L_T,\mcal L_T]=\mcal L_0$. Since $00,0j\notin T$, then $\mcal L_T^2=\mcal L_T^{(2)}=0$ and the algebra is 2-step nilpotent and 2-step solvable (not abelian).
     
Second, consider $T=F_J$. We begin with the case $J=\emptyset$, i.e. $T=\{00\}$,    since it has different properties from the others. The assertions follow straightforwardly. As before $[\mcal L_T,\mcal L_T]=\mcal L_0$, but now $00\in T$, so that $\mcal L_T^n=\mcal L_T^{(n)}=\mcal L_0$ for all $n$. 
 The center is $\f z=\oplus_{i\in I} \mcal L_i=\mcal D\oplus \mcal M$  and $\mcal L_T=\mcal L_0\oplus\f z$. Since $\mcal L_0$ is simple, then the center coincides with the 
 radical (which in particular is abelian). If $J\ne\emptyset$, again the subspace $ R:=\oplus_{i\in I} \mcal L_i$  is abelian ($ii,ij\notin T$) 
 but now it does not coincide with the center $\f z=\f{der}(\mcal O)\oplus\sum_{j\in I\setminus J}\mcal M_j$. 
 Recall that  any direct sum of a semisimple algebra and a solvable ideal $R,$ forces $R$ to coincide with the radical of that space. 
 This is exactly  our situation, since $\mcal L_T=\mcal L_0\oplus R$ with $\mcal L_0$ simple and $R$ an abelian ideal, due to
 $[R,\mcal L_0]\subseteq R$. Thus $\mcal L_{ss}=\mcal L_0$ and   $\f r= \oplus_{i\in I} \mcal L_i$. 
 Furthermore, we can provide more information on the lower and central series: 
     $[\mcal L_T,\mcal L_T]=\mcal L_0\oplus\sum_{j\in J}[\mcal L_0,\mcal L_j]=\mcal L_0\oplus\sum_{j\in J}\mcal M_j$, and then $[\mcal L_T,\mcal L_T]=\mcal L_T^{n}=\mcal L_T^{(n)}$ for all $n\ge2.$ 
    \smallskip
    
    Let us study how many isomorphism classes appear. The different Levi factors show that $\mcal L_{E_J}$ and $\mcal L_{F_K}$ cannot be  isomorphic. 
    Moreover, the dimensions of the centers imply that $|J| = |K|$ is a necessary condition for $\mcal L_{E_J}$ to be isomorphic to $\mcal L_{E_K}$ (and also for $\mcal L_{F_J}$ to be isomorphic to $\mcal L_{F_K}$).
    We can also provide an explicit isomorphism to verify that this condition is sufficient.
    If $J$ and $K$ are subsets of $I$ with the same cardinality, consider a bijection $\mu\colon I \to I$ such that $\mu(J) = K$ (not necessarily a collineation).  
Since $\dim \mcal D_i = 2$ for any $i \in I$, we can define a linear bijection $\theta_{ij}\colon \mcal D_i \to \mcal D_j$ for any $i,j \in I$.  
Now, define the linear isomorphism $f\colon \mcal L \to \mcal L$ by:
 $$
 \begin{array}{l}
      f\vert_{\mcal L_0}=\id , \\
 f\vert_{\mcal D_i}=\theta_{i\mu(i)},\\
 f(e_i\otimes u)=e_{\mu(i)}\otimes u, \quad \forall i\in I, \forall u\in \mcal J_0.
 \end{array}
 $$
Let us check that $f\colon \mcal L_{E_J}\to \mcal L_{E_K}$   and $f\colon \mcal L_{F_J}\to \mcal L_{F_K}$
are Lie algebra isomorphisms (note that $f\colon \mcal L \to \mcal L $ is not an isomorphism).
First consider these brackets in $\mcal L$:
\begin{itemize}
    \item If $x,y\in\mcal L_0$, then $f([x,y])=[x,y]=[f(x),f(y)]$;
    \item If $x \in\mcal L_0$, and $y=D+e_j\otimes u\in\mcal L_j$, then  $f([x,y])=e_{\mu(j)}\otimes x(u)=[f(x),f(y)]$;
     \item If $x  =D+e_j\otimes u\in\mcal L_j$, and $y  =D'+e_j\otimes v\in\mcal L_j$, then $f([x,y])=-4[R_u,R_v]=[f(x),f(y)]$;
\end{itemize}
though $f([x,y])\ne [f(x),f(y)]$ if $x\in\mcal L_j$ and $y\in\mcal L_k$, with $j\ne k$ in $I$ (the maps $\theta$ were chosen quite arbitrarily). 
 However, we can consider similar arguments when $T=E_J$ and  $T'=E_K,$ or  when $T=F_J$ and $T'=F_K.$ We have
$\ep^T_{jj}=\ep^{T'}_{\mu(j)\mu(j)}$, $\ep^T_{00}=\ep^{T'}_{00}$, and $\ep^T_{0j}=\ep^{T'}_{0\mu(j)}$ for all $j\in I$, (equal to either 0 or 1, depending on the chosen generalised nice set $T$). Thus, $f([x,y]^{\ep^{T}})= [f(x),f(y)]^{\ep^{T'}}$ whenever: $x,y\in\mcal L_0$, or $x \in\mcal L_0$, $y \in\mcal L_j$, or $x,y\in\mcal L_j$. Finally, if $x \in\mcal L_j$, $y \in\mcal L_k$, for $j\ne k$ in $I$, we have 
  $\ep^T_{jk}=0=\ep^{T'}_{\mu(j)\mu(k)}$ and then $f([x,y]^{\ep^{T}})= 0=[f(x),f(y)]^{\ep^{T'}}$.
\end{proof}

Before studying the generalised nice set $T = P_{\{0,j,j\}}$, let us first identify various semisimple Lie algebras that naturally appear as subalgebras of the exceptional Lie algebras $\mcal L = \mcal T(\mcal C)$.

 \begin{remark} If $\mcal J$
is a Jordan algebra, the \emph{structure algebra}, denoted by $\f{str}(\mcal J)$, is the Lie algebra generated by the multiplication operators. Taking into account that $[[R_u,R_v],R_w]=R_{(v,w,u)}$, then $\f{str}(\mcal J)=R_{\mcal J}+[R_{\mcal J},R_{\mcal J}]$. Likewise, the \emph{inner structure algebra}, denoted by $\f{innstr}(\mcal J)$, is the Lie algebra generated by the traceless multiplication operators, that is, $\f{innstr}(\mcal J)=R_{\mcal J_0}\oplus [R_{\mcal J},R_{\mcal J}]$. In our setting, that is,     ${\mathcal C}\in\{\mathcal F,\mathcal{K},\mathcal{H},\mathcal{O}  \}$  and  the Jordan algebra $\mcal J=\mathcal{H}_3({\mathcal C})$, the second row of the Freudenthal Magic Square gives its inner structure algebra, so that in particular,  $\f{innstr}(\mcal J)$ is semisimple. 

Also, the Lie
algebra in the third row  is the well-known Tits-Kantor-Koecher Lie algebra of the Jordan algebra
$\mcal J$ \cite{Tits62}.   
That is, $\f{tkk}(\mcal J)=\f{sl}_2\otimes \mcal J\oplus\f{der}(\mcal J)$, which is also 
semisimple in our case $\mcal J=\mathcal{H}_3({\mathcal C})$, as we see from Table~\ref{tab_magicSquare}.

\begin{table}[h]
    \centering
    \renewcommand{\arraystretch}{1.2} 
    \begin{tabular}{|c|cccc|} 
  \hline
  $\mathcal{C}$ 
        & $\mathcal{F}$ & $\mathcal{K}$ & $\mathcal{H}$ & $\mathcal{O}$ \\\hline\hline
        $\mathfrak{der}(\mathcal{J})$     & $\mathfrak{a}_1$ & $\mathfrak{a}_2$ & $\mathfrak{c}_3$ & $\mathfrak{f}_4$ \\
        $\mathfrak{innstr}(\mathcal{J})$  & $\mathfrak{a}_2$ & $2\mathfrak{a}_2$ & $\mathfrak{a}_5$ & $\mathfrak{e}_6$ \\
        $\mathfrak{tkk}(\mathcal{J})$     & $\mathfrak{c}_3$ & $\mathfrak{a}_5$ & $\mathfrak{d}_6$ & $\mathfrak{e}_7$ \\
        $\mathcal{T}(\mathcal{C})$        & $\mathfrak{f}_4$ & $\mathfrak{e}_6$ & $\mathfrak{e}_7$ & $\mathfrak{e}_8$ \\\hline
    \end{tabular}
    \vspace{0.5em}
    \caption{Freudenthal Magic Square}
    \label{tab_magicSquare}
\end{table}

   We would like to highlight that these algebras arise as distinguished graded subalgebras within our exceptional Lie algebras  $\mcal T(\mcal C)$.    More precisely, if $\Gamma_{\mcal L}$
     is our grading in Eq.~\eqref{eq_gradtodas}, then 
     $$
     \sum_{i\in I_0}\mcal L_i=\mcal T(\mcal C);\qquad
     \sum_{i=0,1,2,5}\mcal L_i\cong\f{tkk}(\mcal J);\qquad
     \sum_{i=0,1}\mcal L_i\cong\f{innstr}(\mcal J)\oplus2\f z;\qquad
      \mcal L_0=\f{der}(\mcal J).
     $$
    By $2\f z$ we mean a central ideal of dimension 2. In fact, we can find the inner structure algebra as $\mcal L_0\oplus \mcal M_1=\f{innstr}(\mcal J)$   (and so $\mcal{D}_1=2 \f z$). Note that the line $\{1,2,5\}$ can be replaced by any other.
    The reason for focusing on these subalgebras is that they will soon allow us to identify the Levi factors associated with certain graded contractions.
 \end{remark}
 
 Now that we know that \( \mathcal{L}_0 \oplus \mathcal{M}_j \) is semisimple for any \( j \in I \),  
the properties of the algebra \( \mathcal{L}_{P_{\{0,j,j\}}} \) follow immediately,  
since \(\mcal D_j\oplus \sum_{k\ne j}\mcal L_k \) is an abelian complement to it.

\begin{prop}\label{prop_elnoEniF}
For any \( j \in I \), let \( T = P_{\{0, j, j\}} \). Then the Lie algebra \( \mathcal{L}_T \) is reductive, with Levi factor  
\( \mathcal{L}_{ss} = \mathcal{L}_0 \oplus \mathcal{M}_j \cong \mathfrak{innstr}(\mathcal{J}) \),  
and radical \( \mathfrak{r} = \mathcal{D}_j \oplus \sum_{k \in I \setminus \{ j \}} \mathcal{L}_k = \mathfrak{z} \),  
which is an abelian Lie algebra.
\end{prop}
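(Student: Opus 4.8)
The plan is to realise $\mcal L_T$ as a vector-space direct sum $\mcal L_T=L\oplus R$ with $L:=\mcal L_0\oplus\mcal M_j$ and $R:=\mcal D_j\oplus\bigoplus_{k\in I\setminus\{j\}}\mcal L_k$, and to read off every bracket from the description of $\ep^T$ together with Lemma~\ref{le_comovanloscorchetes}. Since $T=P_{\{0,j,j\}}=\{00,0j,jj\}$, the contraction $\ep^T$ equals $1$ on exactly the three unordered pairs $00$, $0j$, $jj$ and vanishes on all others; hence the only brackets of $\mcal L_T$ that can be nonzero are $[\mcal L_0,\mcal L_0]^{\ep^T}=\mcal L_0$, $[\mcal L_0,\mcal M_j]^{\ep^T}=\mcal M_j$ and $[\mcal M_j,\mcal M_j]^{\ep^T}=\mcal L_0$, every other bracket vanishing either because the relevant value of $\ep^T$ is $0$ (all pairs of indices not both lying in $\{0,j\}$) or because the corresponding bracket of $\mcal L$ is already zero ($[\mcal L_0,\mcal D_j]=0$, $[\mcal D_j,\mcal M_j]=0$, $[\mcal D_j,\mcal D_j]=0$, by Lemma~\ref{le_comovanloscorchetes}).

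First I would check that $L$ is a subalgebra of $\mcal L_T$ and, more importantly, that the contracted bracket restricted to $L$ coincides with the ambient bracket of $\mcal L=\mcal T(\mcal C)$ --- this is immediate because $\ep^T$ takes the value $1$ on each of $00$, $0j$, $jj$. Consequently $L$, as a Lie algebra, is isomorphic to the graded subalgebra $\mcal L_0\oplus\mcal M_j$ of $\mcal L$, which by the Remark preceding this proposition (reducing to the case $j=1$ via a collineation $\sigma$ with $\sigma(1)=j$ and the automorphism $f_\sigma$ of~\eqref{eq_extendiendo_colin_a_iso}) equals $\f{innstr}(\mcal J)$, and is therefore semisimple by Table~\ref{tab_magicSquare}.

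Next I would verify that $R$ is a central ideal of $\mcal L_T$. Indeed, every bracket internal to $R$ and every bracket $[L,R]^{\ep^T}$ vanishes: those involving $\mcal D_j$ are already zero in $\mcal L$ ($[\mcal D_j,\mcal D_j]=[\mcal L_0,\mcal D_j]=[\mcal M_j,\mcal D_j]=0$), and all the remaining ones carry a zero factor of $\ep^T$, since $\ep^T_{0k}=\ep^T_{jk}=\ep^T_{kl}=0$ for $k,l\in I\setminus\{j\}$. Thus $[\mcal L_T,R]^{\ep^T}=0$, so $R\subseteq\f z(\mcal L_T)$ and in particular $R$ is an abelian ideal. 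For the reverse inclusion I would use that the centre is a homogeneous subspace: by Lemma~\ref{le_comovanloscorchetes}(a), $\mcal L_0\cap\f z=0$; by part (c), $\mcal L_j\cap\f z=\mcal D_j$ (a central element of $\mcal L_j$ must centralise $\mcal L_j$, as $\ep^T_{jj}=1$, hence lies in $\mcal D_j$, and conversely every element of $\mcal D_j$ is central because $[\mcal D_j,\mcal L_0]=[\mcal D_j,\mcal L_j]=0$); and $\mcal L_k\subseteq\f z$ for every $k\in I\setminus\{j\}$. Hence $\f z(\mcal L_T)=R$.

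Finally, invoking the standard fact recalled at the start of Section~\ref{se_propiedades} --- that a decomposition of a Lie algebra into a semisimple subalgebra and a solvable ideal displays these as a Levi factor and the radical --- the decomposition $\mcal L_T=L\oplus R$ gives $\mcal L_{ss}=L=\mcal L_0\oplus\mcal M_j\cong\f{innstr}(\mcal J)$ and $\f r=R$; since $\f r=R=\f z$, the algebra $\mcal L_T$ is reductive, as claimed. The only step that genuinely requires care --- rather than mere bookkeeping with $\ep^T$ and Lemma~\ref{le_comovanloscorchetes} --- is the identification of the restricted bracket on $L$ with the bracket of $\mcal L$, since this is precisely what allows the isomorphism $\mcal L_0\oplus\mcal M_j\cong\f{innstr}(\mcal J)$ to be transported from the non-contracted algebra to $\mcal L_T$.
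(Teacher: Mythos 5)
Your proposal is correct and follows exactly the route the paper intends: the paper states the result ``follows immediately'' from the semisimplicity of $\mathcal{L}_0\oplus\mathcal{M}_j\cong\mathfrak{innstr}(\mathcal{J})$ together with the fact that $\mathcal{D}_j\oplus\sum_{k\ne j}\mathcal{L}_k$ is an abelian (indeed central) complement, and you have simply written out the bookkeeping with $\varepsilon^T$ and Lemma~\ref{le_comovanloscorchetes} that the paper leaves implicit. No gaps.
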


In particular, \( \mathcal{L}_{P_{\{0,j,j\}}} \) is not isomorphic to any of the Lie algebras encountered so far.  
We now turn to the study of Lie algebras arising from generalised nice sets \( T \) such that \( T \cap X \) is not itself a generalised nice set.  
To describe the sequences of dimensions of their lower central series, recall the notation \( \tilde \ell = \dim \mcal L_i = 3\ell + 4 \), where \( \ell = \dim \mcal C \).

\begin{prop}\label{pr_casolosYiexcepciones}
If $T$ is a generalised nice set as in Example~\ref{ex_las7excepciones}, then $\mcal L_T$ satisfies the following properties:
\begin{itemize}
    \item $T = X_0$. The algebra $\mcal L_T$ coincides with the original algebra $ \mcal L$, that is, it is simple.
    
    \item $T = Y_7$. The algebra is not reductive, with $\f z = \mcal L_3 \oplus \mcal L_4 \oplus \mcal L_6 \oplus \mcal L_7$ of dimension $4\tilde \ell$.
    The Levi decomposition is:
    \begin{itemize}
        \item $\mcal L_{ss} = \mcal L_0 \oplus \mcal M_1 \cong \f{innstr}(\mcal J)$;
        \item $\f r = \mcal D_1 \oplus \sum_{i \ne 0,1} \mcal L_i$ is 2-step solvable but not nilpotent. The dimensions of the derived series / lower central series of $\f r$ are given by the sequences $(6\tilde \ell + 2, 2\tilde \ell, 0)$ and $(6\tilde \ell + 2, 2\tilde \ell)$, respectively.
    \end{itemize}

    \item $T = Y_{11}$. The algebra is not reductive, with $\f z = \mcal L_4 \oplus \mcal L_7$ of dimension $2\tilde \ell$.
    The Levi decomposition is:
    \begin{itemize}
        \item $\mcal L_{ss} = \mcal L_0 \oplus \mcal M_1 \cong \f{innstr}(\mcal J)$;
        \item $\f r = \mcal D_1 \oplus \sum_{i \ne 0,1} \mcal L_i$ is 2-step solvable but not nilpotent. The dimensions of the derived series / lower central series of $\f r$ are given by the sequences $(6\tilde \ell + 2, 4\tilde \ell, 0)$ and $(6\tilde \ell + 2, 4\tilde \ell)$, respectively.
    \end{itemize}

    \item $T = Y_{15}$. The algebra is not reductive, with $\f z = 0$.
    The Levi decomposition is:
    \begin{itemize}
        \item $\mcal L_{ss} = \mcal L_0 \oplus \mcal M_1 \cong \f{innstr}(\mcal J)$;
        \item $\f r = \mcal D_1 \oplus \sum_{i \ne 0,1} \mcal L_i$ is 2-step solvable but not nilpotent. The dimensions of the derived series / lower central series of $\f r$ are given by the sequences $(6\tilde \ell + 2, 6\tilde \ell, 0)$ and $(6\tilde \ell + 2, 6\tilde \ell)$, respectively.
    \end{itemize}

    \item $T = Y_{19}$. The algebra is not reductive, with $\f z = 0$.
    The Levi decomposition is:
    \begin{itemize}
        \item $\mcal L_{ss} = \mcal L_0 \oplus \mcal M_1 \cong \f{innstr}(\mcal J)$;
        \item $\f r = \mcal D_1 \oplus \sum_{i \ne 0,1} \mcal L_i$ is 3-step solvable but not nilpotent. The dimensions of the derived series / lower central series of $\f r$ are given by the sequences $(6\tilde \ell + 2, 6\tilde \ell, 2\tilde \ell, 0)$ and $(6\tilde \ell + 2, 6\tilde \ell)$, respectively.
    \end{itemize}

    \item $T = Y_{10}$. The algebra is reductive.
    The Levi decomposition is:
    \begin{itemize}
        \item $\mcal L_{ss} = \sum_{i = 0,1,2,5} \mcal L_i \cong \f{tkk}(\mcal J)$;
        \item $\f z = \f r = \mcal L_3 \oplus \mcal L_4 \oplus \mcal L_6 \oplus \mcal L_7$ is abelian.
    \end{itemize}

    \item $T = Y_{26}$. The algebra is not reductive, with $\f z = 0$.
    The Levi decomposition is:
    \begin{itemize}
        \item $\mcal L_{ss} = \sum_{i = 0,1,2,5} \mcal L_i \cong \f{tkk}(\mcal J)$;
        \item $\f r = \mcal L_3 \oplus \mcal L_4 \oplus \mcal L_6 \oplus \mcal L_7$ is abelian.
    \end{itemize}
\end{itemize}
\end{prop}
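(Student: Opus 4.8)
The plan is to go through the seven generalised nice sets $T$ one at a time, in each case translating everything into the combinatorics of which pairs belong to $T$. The only facts about $\mcal L=\mcal T(\mcal C)$ that I would use are: the bracket relations of Lemma~\ref{le_comovanloscorchetes}, which for $i\neq j$ nonzero give $[\mcal L_i,\mcal L_j]=\mcal L_{i\ast j}$, together with $[\mcal L_i,\mcal L_i]=\mcal L_0$, $[\mcal L_0,\mcal L_j]=\mcal M_j$ and $[\mcal L_0,\mcal L_0]=\mcal L_0$; the centralizer statements (a)--(d) of that lemma; and the identifications of the Remark preceding this proposition, namely $\mcal L_0\oplus\mcal M_1\cong\f{innstr}(\mcal J)$ and $\sum_{i\in\{0,1,2,5\}}\mcal L_i\cong\f{tkk}(\mcal J)$, both semisimple. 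Since $T$ is a generalised nice set one has $[\mcal L_i,\mcal L_j]^{\varepsilon^T}=[\mcal L_i,\mcal L_j]$ if $\{i,j\}\in T$ and $=0$ otherwise, so every bracket in $\mcal L_T$ is read off directly from $T$. The case $T=X_0$ is trivial: $\varepsilon^{X_0}\equiv 1$, hence $\mcal L_{X_0}=\mcal L$ is simple.

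For $T\in\{Y_7,Y_{11},Y_{15},Y_{19}\}$ the three pairs $00$, $01$, $11$ all lie in $T$, so the contracted bracket agrees with the original one on $\mcal L_0\oplus\mcal M_1$; hence $\mcal L_{ss}:=\mcal L_0\oplus\mcal M_1\cong\f{innstr}(\mcal J)$ is a semisimple subalgebra of $\mcal L_T$. I would then verify that $\f r:=\mcal D_1\oplus\bigoplus_{i\neq 0,1}\mcal L_i$ is an ideal: the only surviving brackets of $\mcal L_{ss}$ with $\f r$ are $[\mcal L_0,\mcal L_j]^{\varepsilon^T}\subseteq\mcal M_j$ and $[\mcal M_1,\mcal L_j]^{\varepsilon^T}\subseteq\mcal L_{1\ast j}$ for $j\geq 2$, both in $\f r$ because $1\ast\{2,\dots,7\}=\{2,\dots,7\}$, while $[\mcal D_1,\mcal L_0]=[\mcal D_1,\mcal M_1]=0$; likewise $[\f r,\f r]^{\varepsilon^T}\subseteq\f r$. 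As $\mcal L_T=\f r\oplus\mcal L_{ss}$ exhibits $\mcal L_T$ as the vector-space sum of an ideal and a semisimple subalgebra, the uniqueness of the Levi decomposition recalled in the proof of Proposition~\ref{pr_losEsyFs} forces $\f r=\f r(\mcal L_T)$ and $\mcal L_{ss}$ to be the Levi factor once $\f r$ is seen to be solvable. Solvability and the dimension sequences come from $[\f r,\f r]^{\varepsilon^T}=\sum_{j\geq 2,\,\{1,j\}\in T}\mcal L_{1\ast j}+\sum_{2\leq a<b,\,\{a,b\}\in T}\mcal L_{a\ast b}$ (no diagonal pair $\{a,a\}$ with $a\geq2$ lies in these $T$, so no $\mcal L_0$ summand appears). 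For $Y_7,Y_{11},Y_{15}$ the second sum is empty and the first gives $\bigoplus_{j\in K}\mcal L_j$ with $|K|=2,4,6$ (dimension $2\tilde\ell,4\tilde\ell,6\tilde\ell$); this subalgebra contains no $\mcal D_1$-summand, so no pair of $T$ lies inside its index set and $\f r^{(2)}=0$, while $[\mcal D_1,\mcal L_j]^{\varepsilon^T}=\mcal L_{1\ast j}$ makes the lower central series stabilise at $[\f r,\f r]^{\varepsilon^T}$, whence $\f r$ is not nilpotent. The centre is computed componentwise: by Lemma~\ref{le_comovanloscorchetes}(b),(c),(d) none of $\mcal L_0$, $\mcal L_1$, or $\mcal L_j$ (for $j$ with $\{1,j\}\in T$) meets $\f z$, whereas $\mcal L_k\subseteq\f z$ for every index $k$ lying in no pair of $T$; this yields $\f z=\mcal L_3\oplus\mcal L_4\oplus\mcal L_6\oplus\mcal L_7$, $\mcal L_4\oplus\mcal L_7$, $0$, $0$ for $Y_7,Y_{11},Y_{15},Y_{19}$ respectively. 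Reductivity fails since $\dim\f r=6\tilde\ell+2>4\tilde\ell\geq\dim\f z$.

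The one case needing more than pure bookkeeping is $Y_{19}$, which I expect to be the main obstacle. Here $\{1,j\}\in Y_{19}$ for all $j\geq 2$, so $[\f r,\f r]^{\varepsilon^T}=\bigoplus_{j\geq 2}\mcal L_j$ of dimension $6\tilde\ell$, but $Y_{19}$ also contains the four pairs $23,35,26,56$, whose $\ast$-products are $7,4,4,7$; using $[\mcal L_a,\mcal L_b]^{\varepsilon^T}=\mcal L_{a\ast b}$ for $a\neq b$ nonzero (so that each of these brackets is the full $\mcal L_{a\ast b}$, not merely the part $\mathbb{F}D_{e_a,e_b}\oplus\mcal M_{a\ast b}$) one gets $\f r^{(2)}=[\bigoplus_{j\geq 2}\mcal L_j,\bigoplus_{j\geq 2}\mcal L_j]^{\varepsilon^T}=\mcal L_4\oplus\mcal L_7$ of dimension $2\tilde\ell$, and then $\f r^{(3)}=0$ because $\{4,7\}\notin Y_{19}$, giving the $3$-step derived series $(6\tilde\ell+2,6\tilde\ell,2\tilde\ell,0)$; the lower central series still stabilises at $\bigoplus_{j\geq 2}\mcal L_j$ since $[\mcal D_1,\bigoplus_{j\geq 2}\mcal L_j]^{\varepsilon^T}=\bigoplus_{j\geq 2}\mcal L_j$. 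The centre and Levi data are then obtained exactly as for $Y_{15}$.

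For $T\in\{Y_{10},Y_{26}\}$ I would use the line $\{1,2,5\}$: all pairs inside $\{0,1,2,5\}$ lie in $T$, so $\mcal L_{ss}:=\sum_{i\in\{0,1,2,5\}}\mcal L_i$ keeps its original bracket and is $\cong\f{tkk}(\mcal J)$, semisimple. Put $R:=\bigoplus_{i\in\{3,4,6,7\}}\mcal L_i$; since no pair inside $\{3,4,6,7\}$ lies in either set, $R$ is abelian. For $Y_{10}$ no mixed pair lies in the support, so $\mcal L_{Y_{10}}=\mcal L_{ss}\oplus R$ is a direct sum of ideals: reductive, with $\f z=\f r=R$. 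For $Y_{26}$ all mixed pairs lie in the support and $\{0,1,2,5\}\ast\{3,4,6,7\}=\{3,4,6,7\}$ (the nontrivial coset of $\langle g_1,g_2\rangle$), so $[\mcal L_{ss},R]^{\varepsilon^T}\subseteq R$; thus $R$ is an abelian ideal, $\mcal L_{Y_{26}}=R\rtimes\mcal L_{ss}$ is the Levi decomposition by the uniqueness recalled above, and $\f z=0$ because every nonzero $x\in\mcal L_k$ with $k\in\{3,4,6,7\}$ has $[x,\mcal L_1]^{\varepsilon^T}=[x,\mcal L_1]\neq 0$ by Lemma~\ref{le_comovanloscorchetes}(d) (as $\{1,k\}\in Y_{26}$), while $\f z(\mcal L_{ss})=0$; hence $\mcal L_{Y_{26}}$ is not reductive. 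Apart from the $Y_{19}$ computation, the only point requiring care is to check in each case that the subalgebra claimed as Levi factor really is preserved by the contraction, i.e. that $T$ contains all pairs among the relevant degrees.
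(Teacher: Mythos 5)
Your proposal is correct and follows essentially the same route as the paper: identify the semisimple graded subalgebra forced by the pairs $00,01,11$ (resp.\ all pairs inside $\{0,1,2,5\}$), show the complementary homogeneous subspace is a solvable ideal so that uniqueness of the Levi decomposition applies, and read off the derived/lower central series and the centre combinatorially from the support via Lemma~\ref{le_comovanloscorchetes}. All the individual computations (including the $Y_{19}$ case with $R^{(2)}=\mcal L_4\oplus\mcal L_7$ and the non-nilpotency of the radicals from $[\mcal D_1,\mcal L_j]^{\ep^T}=\mcal L_{1*j}$) agree with those in the paper.
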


 \begin{proof}
 Assume $T\in\{Y_{7},Y_{11},Y_{15},Y_{19}\}$. In these cases, $\ep^T_{00}=\ep^T_{01}=\ep^T_{11}=1$, so that the  semisimple subalgebra $L=\mcal L_0\oplus \mcal M_1\le\mcal T(\mcal C)$ ($L\cong \f{innstr}(\mcal J)$) is a subalgebra of $\mcal L_T$ too. 
 Take the complement $R=  \mcal D_1\oplus\sum_{i\ne1}\mcal L_i$  of $L$ and let us prove that $R$ coincides with the radical of $\mcal L_T$. 
 Taking into account   the facts
 $[\mcal D_1,\mcal L_j]=\mcal L_{1*j}$ if $j\ne1$, $[\mcal D_1,\mcal L_1]=0$, and $[\mcal L_i,\mcal L_j]=\mcal L_{i*j}$ if $i\ne j\ne1$, as well as the supports, we compute the bracket:  
\begin{equation}\label{eq_radicalYs}
[R,R]^{\ep^T}=\begin{cases}
    \mcal L_2\oplus \mcal L_5\quad\textrm{if $T=Y_{7}$,}\\
    \mcal L_2\oplus \mcal L_5\oplus \mcal L_3\oplus \mcal L_6\quad\textrm{if $T=Y_{11}$,}\\
     \oplus_{i\ne0,1} \mcal L_i\quad\textrm{if $T=Y_{15,19}$;}
\end{cases}
\ 
R^{(2)} =\begin{cases}
    0\quad\textrm{if $T=Y_{7,11,15}$,}\\
     \mcal L_4\oplus \mcal L_7\quad\textrm{if $T=Y_{19}$.}
\end{cases} 
\end{equation}
In particular, $R\le \mcal L_T$ is solvable, since $R^{(3)} =0$ for the 4 generalised nice sets. 
 Let us check that $R$ coincides with the radical $\f r(\mcal L_T)$: we simply need to check that $R$ is an ideal, that is, $[R,L]^{\ep^T}\subseteq R$. 
    This is straightforward: if $i\ne1$, $[\mcal L_0,\mcal L_i]\subseteq \mcal L_i$, 
    $[\mcal L_0,\mcal D_1]=0=[\mcal M_1,\mcal D_1]$,
    $[\mcal M_1,\mcal L_i]\subseteq \mcal L_{i*1}$, so $[R,L]\subseteq R$. If, instead,  we multiply with $[\,,\,]^{\ep^T}$, some of the previous brackets will vanish, but the containments will be maintained. 
 
Although all 4 algebras have common Levi decomposition, i.e. 
$\mcal L_{ss}=\mcal L_0\oplus \mcal M_1$ and $\f r= \mcal D_1\oplus\sum_{i\ne1}\mcal L_i$,
they are perfectly distinguishable according to \eqref{eq_radicalYs}. The radicals   have different dimension, except for 
   the latter two cases ($i=15$ and $19$), which have different solvability index.
   The four radicals are  not nilpotent,   since $R^n=R'$ for all $n\ge 1$. The reason is that, for each of the four generalised nice sets under consideration, the pair $1j \in T$ if and only if $\{1, 1 \ast j\} \in T$.  
\smallskip

 The remaining generalised nice sets ($Y_{10}$ and $Y_{26}$) both contain 
\(
Y_{10} = \{jk : j,k = 0,1,2,5\},
\)
which implies that the semisimple subalgebra 
\(
\sum_{i=0,1,2,5} \mcal L_i \cong \mathfrak{tkk}(\mcal J) \le \mcal T(\mcal C)
\)
is a subalgebra of $\mcal L_T$.  
In the two cases, the subspace 
\( R=\mcal L_3 \oplus \mcal L_4 \oplus \mcal L_6 \oplus \mcal L_7 \)
is abelian, since \( \{jk : j,k = 3,4,6,7\} \subseteq X_0 \setminus T \). 
Moreover, $R$ is an ideal, 
taking into account that, for any line $\ell\subseteq I$ in the Fano plane (e.g.\ \( \ell = \{1,2,5\} \)), we have 
\( \ell \ast \ell^c \subseteq \ell^c \), where \( \ell^c = I \setminus \ell \) is the complement. 
 In particular, 
$R$ coincides with the radical   \( \f r(\mcal L_T) \), while 
\( \sum_{i=0,1,2,5} \mcal L_i \)
is the Levi factor. 
In the case \( T = Y_{26} \), the algebra \( \mcal L_T \) has no center, since for \( j = 3,4,6,7 \), we have \( \mathfrak{z} \cap \mcal L_j = 0 \), due to \( 1j \in T \) and Lemma~\ref{le_comovanloscorchetes}(d).  
 \end{proof}
 
 Evidently, the algebras studied in the last proposition cannot be isomorphic to any of those that appeared earlier. Moreover, we may observe the following:

\begin{coro}
All the Lie algebras in the Freudenthal Magic Square appear as Levi factors of suitable graded contractions of \( \mcal L = \mathcal{T}(\mcal C) \).
\end{coro}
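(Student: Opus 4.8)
The plan is to assemble the Levi decompositions already established in this section, reading off one generalised nice set per row of Table~\ref{tab_magicSquare}. Fix a Hurwitz algebra $\mathcal{C}\in\{\mathcal{F},\mathcal{K},\mathcal{H},\mathcal{O}\}$ and set $\mcal J=\mathcal{H}_3(\mathcal{C})$. The column of the magic square indexed by $\mathcal{C}$ consists exactly of the four Lie algebras $\f{der}(\mcal J)$, $\f{innstr}(\mcal J)$, $\f{tkk}(\mcal J)$ and $\mcal T(\mcal C)$, and as $\mathcal{C}$ ranges over the four Hurwitz algebras these sixteen cells exhaust the square. Hence it suffices, for each of these four Lie algebras, to produce a generalised nice set $T$ (from the list in Theorem~\ref{teo_listaGNS}) such that the contracted algebra $\mcal L_T=(\mcal T(\mcal C))^{\varepsilon^T}$, with $\varepsilon^T$ as in Proposition~\ref{prop_ep^T}, has Levi factor isomorphic to it.

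I would then argue row by row. For the bottom row take $T=X_0$: by Proposition~\ref{pr_casolosYiexcepciones} the contraction is trivial, so $\mcal L_{X_0}=\mcal L=\mcal T(\mcal C)$ is simple and is its own Levi factor. For the third row take $T=Y_{10}$ (the choice $T=Y_{26}$ works equally well): Proposition~\ref{pr_casolosYiexcepciones} yields $\mcal L_{ss}=\sum_{i=0,1,2,5}\mcal L_i\cong\f{tkk}(\mcal J)$. For the second row take $T=P_{\{0,1,1\}}$ (any of $Y_7$, $Y_{11}$, $Y_{15}$, $Y_{19}$ would also serve): Proposition~\ref{prop_elnoEniF} yields $\mcal L_{ss}=\mcal L_0\oplus\mcal M_1\cong\f{innstr}(\mcal J)$. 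For the top row take $T=F_\emptyset=\{00\}$ (any $F_J$ does the job): Proposition~\ref{pr_losEsyFs} yields $\mcal L_{ss}=\mcal L_0\cong\f{der}(\mcal J)$.

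The only remaining point is the routine verification that $X_0$, $Y_{10}$, $P_{\{0,1,1\}}$ and $\{00\}$ are genuinely generalised nice sets, and this is recorded in Theorem~\ref{teo_listaGNS}: the first two in item~(1), $P_{\{0,1,1\}}$ in item~(2), and $F_\emptyset$ among the $F_J$ in item~(3). Since none of these four choices of $T$ depends on $\mathcal{C}$, the argument is uniform across $\mathfrak{f}_4$, $\mathfrak{e}_6$, $\mathfrak{e}_7$ and $\mathfrak{e}_8$. I do not expect any genuine obstacle here; the content of the corollary is precisely the observation --- already implicit in Propositions~\ref{pr_losEsyFs}, \ref{prop_elnoEniF} and~\ref{pr_casolosYiexcepciones} --- that the semisimple parts of these very natural graded contractions reproduce the Freudenthal magic square column by column.
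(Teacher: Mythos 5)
Your proposal is correct and follows exactly the route the paper intends: the corollary is stated as an immediate observation from Propositions~\ref{pr_losEsyFs}, \ref{prop_elnoEniF} and~\ref{pr_casolosYiexcepciones}, and your choices $F_\emptyset$, $P_{\{0,1,1\}}$, $Y_{10}$ and $X_0$ realise the four rows $\f{der}(\mcal J)$, $\f{innstr}(\mcal J)$, $\f{tkk}(\mcal J)$ and $\mcal T(\mcal C)$ of Table~\ref{tab_magicSquare}, with the columns swept out by varying $\mathcal{C}$. Nothing is missing.
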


We have yet to analyze the cases in which the generalised nice set \( T \) is not contained in \( X \), but is not entirely outside \( X \) either.
Though the classification up to collineation in Theorem~\ref{teo_listaGNS} is complete,  
it is more convenient at this point to describe more conceptually---following the approach of \cite[Theorem~6.3]{draper2024generalised}---the generalised nice sets in \( X_0 \setminus X \)  
that can be joined to a generalised nice set contained in \( X \), rather than listing them case by case.

  \begin{prop}\label{pr_casogeneral}
 If a generalised nice set \( T \) satisfies neither \( T \subseteq X \) nor \( T \subseteq X_0 \setminus X \), then one of three situations   occurs:
 \begin{itemize}
     \item[\rm(a)] There are an index \( i = 1, \dots, 13 \), and a subset \( \emptyset \ne J \subseteq I \setminus J_i \) such that \( T = S_i \cup E_J \). 
     
     The related contracted Lie algebra  \( \mcal L_T \) is 2-step solvable and not reductive.
     \begin{itemize}
         \item[$-$] The lower central series is given by: \( \mcal L'_T = \mcal L_0 \oplus \sum_{k \in J_i} \mcal L_k \), and \( \mcal L^{2}_T = 0 \) if \( i \ne 13 \), and 
         \( \mcal L^{2}_T = \mcal L_4 \) if \( i = 13 \), with nilpotency index equal to 2 and 3, respectively.
         \item[$-$] The center equals \( \f z = \mcal L_0 \oplus \sum_{j \in I\setminus( J \cup K_i)} \mcal L_j \oplus \sum_{j \in J \setminus K_i} \mcal D_j \).
     \end{itemize}
     
     \item[\rm(b)] There is \( i = 1, \dots, 13 \), with \( T = S_i \cup F_J \) for some \( \emptyset \ne J \subseteq I \) such that, for any \( jk \in S_i \),
     either \( J \cap \{j, k, j*k\} = \emptyset \) or \( \{j, k, j*k\} \subseteq J \).

     The   algebra  \( \mcal L_T \) is not reductive, with center 
     \( \f z = \sum_{j \in I\setminus( J \cup K_i)} \mcal L_j \oplus \sum_{j \in J \setminus K_i} \mcal D_j \). The derived algebra is given by \( \mcal L'_T = \mcal L_0 \oplus \sum_{k \in J_i} \mcal L_k \oplus \sum_{k \in J \setminus J_i} \mcal M_k \). The Levi decomposition is given by:
     \begin{itemize}
         \item[$-$] The Levi factor is \( \mcal L_{ss} = \mcal L_0 \cong \f{der}(\mcal J) \);
         \item[$-$] The radical \( \f r = \f{der}(\mcal O) \oplus \mcal O_0 \otimes \mcal J_0 \) 
         is 2-step solvable, with \( [\f r, \f r] = \sum_{k \in J_i} \mcal L_k \), and nilpotency index equal to 2 or 3, according to whether \( i \ne 13 \) or \( i = 13 \).
     \end{itemize}
     
     \item[\rm(c)] There exist \( i = 1, \dots, 13 \), and \( j \in I \), with \( j \notin K_i \cup J_i \), such that \( T = S_i \cup P_{\{0,j,j\}} \).

Here, \( \mcal L_T \) is not reductive, with center  
\( \f z = \mcal D_j \oplus \sum_{k \notin K_i \cup \{0, j\}} \mcal L_k \). The Levi decomposition is given by:
\begin{itemize}
    \item[$-$] The Levi factor is \( \mcal L_{ss} = \mcal L_0 \oplus \mcal M_j \cong \f{innstr}(\mcal J) \);
    \item[$-$] The radical \( \f r = \mcal D_j \oplus \sum_{k \in I \setminus \{j\}} \mcal L_k \)  is again 2-step solvable, with \( [\f r, \f r] = \sum_{k \in J_i} \mcal L_k \).
\end{itemize}
  \end{itemize}
\end{prop}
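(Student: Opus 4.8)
The plan is to reduce the statement to the announced trichotomy via the classification of generalised nice sets, and then to read off the Levi decomposition, the center and the derived/lower central series of $\mcal L_T$ in each family directly from Lemma~\ref{le_comovanloscorchetes}, using throughout that $\ep^T$ is generic, so that $[\mcal L_k,\mcal L_l]^{\ep^T}$ equals $[\mcal L_k,\mcal L_l]$ when $\{k,l\}\in T$ and vanishes otherwise (Proposition~\ref{prop_ep^T}). For the reduction: by Theorem~\ref{teo_listaGNS} (equivalently, by \cite[Theorem~6.3]{draper2024generalised}), a generalised nice set contained neither in $X$ nor in $X_0\setminus X$ is, apart from the seven exceptional sets $Y_\bullet$ and $X_0$ of Example~\ref{ex_las7excepciones} --- already dealt with in Proposition~\ref{pr_casolosYiexcepciones} --- necessarily of the form $S_i\cup E_J$, $S_i\cup F_J$ (with $J\ne\emptyset$, for otherwise $T\subseteq X$) or $S_i\cup P_{\{0,j,j\}}$, with $i\in\{1,\dots,13\}$ and with $J\subseteq I$, $j\in I$ subject exactly to the restrictions listed in (a), (b), (c); these are precisely the conditions under which such a union is a generalised nice set. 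It then remains to analyse the three families, for which the common engine is the list of brackets of $\mcal L=\mcal T(\mcal C)$ from Lemma~\ref{le_comovanloscorchetes}: $[\mcal L_0,\mcal L_0]=\mcal L_0$, $[\mcal L_0,\mcal L_j]=\mcal M_j$, $[\mcal L_j,\mcal L_j]=\mcal L_0$ and $[\mcal L_j,\mcal L_k]=\mcal L_{j*k}$ for distinct $j,k\in I$ --- this last one being \emph{all} of $\mcal L_{j*k}$, since the $\mcal D$-parts supply $\mcal D_{j*k}$ --- together with the centralizer descriptions in Lemma~\ref{le_comovanloscorchetes}(a)--(d). For the homogeneous components of the center indexed by a $j$ with $0j\in T$ I will moreover use that no nonzero $u\in\mcal J_0$ is killed by all of $\f{der}(\mcal J)$ (i.e.\ $\mcal J_0$ is a nontrivial irreducible $\f{der}(\mcal J)$-module), so that $[\mcal L_0,e_j\otimes u]=-e_j\otimes\f{der}(\mcal J)(u)=0$ forces $u=0$.

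In case (a), $T=S_i\cup E_J$, the only surviving brackets are $[\mcal L_j,\mcal L_j]^{\ep^T}=\mcal L_0$ ($j\in J$) and $[\mcal L_k,\mcal L_l]^{\ep^T}=\mcal L_{k*l}$ ($\{k,l\}\in S_i$), so $\mcal L_T'=\mcal L_0\oplus\bigoplus_{m\in J_i}\mcal L_m$; since $0$ occurs in no pair of $T$, $\mcal L_0$ is central, and iterating once more --- reading off from Table~\ref{tab:table_KiJi} that $J_i\cap K_i=\emptyset$ for $i\ne 13$, while $K_{13}\cap J_{13}=\{5,6,7\}$ meets $S_{13}$ only through the index $4$ --- one finds $\mcal L_T^2=0$ for $i\ne 13$ and $\mcal L_T^2=\mcal L_4$, $\mcal L_T^3=0$ for $i=13$, with $\mcal L_T^{(2)}=0$ always. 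Hence $\mcal L_T$ is $2$-step solvable, nilpotent of index $2$ or $3$, and non-abelian (so not reductive). The center, being homogeneous, is computed component by component: Lemma~\ref{le_comovanloscorchetes}(c),(d) gives $\f z\cap\mcal L_j=\mcal L_j$ if $j\notin J\cup K_i$, $\f z\cap\mcal L_j=\mcal D_j$ if $j\in J\setminus K_i$, and $\f z\cap\mcal L_j=0$ if $j\in K_i$, together with $\mcal L_0\subseteq\f z$; this is the stated formula.

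In case (b), $T=S_i\cup F_J$, the relation $00\in T$ makes $\mcal L_0$ a simple subalgebra of $\mcal L_T$, and the complement $R=\f{der}(\mcal O)\oplus(\mcal O_0\otimes\mcal J_0)=\bigoplus_{k\in I}\mcal L_k$ is a subalgebra on which only the $S_i$-pairs survive, so $[R,R]^{\ep^T}=\bigoplus_{m\in J_i}\mcal L_m$ and, iterating as in case (a), $R$ is nilpotent of index $2$ ($i\ne 13$) or $3$ ($i=13$); the only $0k$-pairs of $T$ with $k\in I$ give $[\mcal L_0,\mcal L_k]=\mcal M_k\subseteq R$, so $R$ is an ideal, and since $\mcal L_T=\mcal L_0\oplus R$ with $\mcal L_0$ semisimple and $R$ a solvable ideal, necessarily $\f r=R$ and $\mcal L_{ss}=\mcal L_0\cong\f{der}(\mcal J)$, with $\mcal L_T$ not reductive because $R$ is not abelian. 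For the center I would combine Lemma~\ref{le_comovanloscorchetes}(d) (killing $\mcal L_j$ for $j\in K_i$) with the no-fixed-vector fact: for $j\in J\setminus K_i$ the only surviving bracket out of $\mcal L_j$ is $[\mcal L_j,\mcal L_0]^{\ep^T}=\mcal M_j$, whence $\f z\cap\mcal L_j=\mcal D_j$, while $\f z\cap\mcal L_j=\mcal L_j$ for $j\notin J\cup K_i$ and $\f z\cap\mcal L_0=0$; finally $\mcal L_T'=\mcal L_0\oplus\bigoplus_{m\in J_i}\mcal L_m\oplus\bigoplus_{k\in J\setminus J_i}\mcal M_k$ and $[\f r,\f r]=\bigoplus_{m\in J_i}\mcal L_m$. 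Case (c), $T=S_i\cup P_{\{0,j,j\}}$, is parallel: now $00,0j,jj\in T$ make $L:=\mcal L_0\oplus\mcal M_j$ a subalgebra of $\mcal L_T$, isomorphic to $\f{innstr}(\mcal J)$ (these are exactly the three brackets defining that subalgebra inside $\mcal T(\mcal C)$); the hypothesis $j\notin K_i\cup J_i$ ensures that no pair joining an index of $R:=\mcal D_j\oplus\bigoplus_{k\in I\setminus\{j\}}\mcal L_k$ to one of $\{0,j\}$ lies in $T$, so $[R,L]^{\ep^T}=0$ and $\mcal L_T=L\oplus R$ as a direct sum of ideals, where $R$ is a subalgebra with $[R,R]^{\ep^T}=\bigoplus_{m\in J_i}\mcal L_m$ and hence $2$-step nilpotent (the case $i=13$ cannot occur, as $K_{13}\cup J_{13}=I$). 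Thus $\f r=R$, $\mcal L_{ss}=L\cong\f{innstr}(\mcal J)$, $\mcal L_T$ is not reductive, $[\f r,\f r]=\bigoplus_{m\in J_i}\mcal L_m$, and $\f z=\f z(\mcal L_T)=\f z(R)=\mcal D_j\oplus\bigoplus_{k\in I\setminus(K_i\cup\{j\})}\mcal L_k$ by Lemma~\ref{le_comovanloscorchetes}(d).

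The main obstacle is not conceptual but one of precise bookkeeping: for each of the three support types one must track exactly which brackets $[\mcal L_k,\mcal L_l]^{\ep^T}$ survive and iterate them correctly, the two genuinely delicate points being (i) that $[\mcal L_k,\mcal L_l]$ is \emph{all} of $\mcal L_{k*l}$ for distinct $k,l\in I$ --- and not merely the proper subspace $\mathbb{F}D_{e_k,e_l}\oplus\mcal M_{k*l}=[\mcal M_k,\mcal M_l]$ --- and (ii) the absence of $\f{der}(\mcal J)$-fixed vectors in $\mcal J_0$, without which a component $\mcal M_j$ with $0j\in T$ could spuriously contribute to the center. All the genuinely hard input --- the linear-independence estimates of Lemma~\ref{le_eltos}, the bracket and centralizer computations of Lemma~\ref{le_comovanloscorchetes}, and the combinatorial classification of Theorem~\ref{teo_listaGNS} --- is already available, so the remaining work is organisational.
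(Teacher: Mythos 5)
Your proposal is correct and follows essentially the same route as the paper: defer the trichotomy to the classification of generalised nice sets (Theorem~\ref{teo_listaGNS}, i.e.\ \cite[Theorem~6.3]{draper2024generalised}), exhibit the relevant semisimple subalgebra and solvable ideal in each family, and read off centers and derived/lower central series from Lemma~\ref{le_comovanloscorchetes} together with the vanishing pattern of $\ep^T$. If anything, you are slightly more explicit than the paper on two points it glosses over --- that $\mcal J_0$ has no $\f{der}(\mcal J)$-fixed vectors (needed for $\f z\cap\mcal L_j=\mcal D_j$ when $0j\in T$) and that $i=13$ cannot occur in case (c) --- both of which are correct and consistent with the paper's argument.
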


  \begin{proof}
  As mentioned above, the conditions on \( j \) and \( J \) ensuring that the sets \( S_i \cup E_J \), \( S_i \cup F_J \), and \( S_i \cup P_{\{0,j,j\}} \) are generalised nice sets  
are extracted from~\cite[Theorem~6.3]{draper2024generalised}.
    Recall the sets $K_i$ and $J_i$ described in Table~\ref{tab:table_KiJi}, and that we use $j,k$ for indices in $I$, not in $I_0$.
    
Case (a). As in Proposition~\ref{pr_casolosSi}, we have \( \sum_{k \in J_i} \mcal L_k \subseteq \mcal L'_T \), but now there exists \( j \) such that \( jj \in T \), so
\( [\mcal L_j, \mcal L_j] = \mcal L_0 \subseteq \mcal L'_T \) as well. Clearly, there are no additional elements in the derived algebra.   Since \( 00, kk \notin T \) if $k\in J_i$, it follows that \( \mcal L_0\cap \mcal L_{T}^{(n)}=0$ for all \( n \ge 2 \). Also,  \( 0k \notin T \), so   that \( \mcal L_{T}^{(n)} = \mcal L_{S_i}^{(n)} \) and \( \mcal L_{T}^{n} = \mcal L_{S_i}^{n} \) for all \( n \ge 2 \), from which the solvability and nilpotency indices are derived.

Regarding the center, we have \( \f z \subseteq \f z(\mcal L_{S_i}) = \mcal L_0 \oplus \sum_{k \in I\setminus K_i} \mcal L_k \).
For any fixed \( k \notin K_i \), if \( k \notin J \), then \( \mcal L_k \subseteq \f z \); but if \( k \in J \), then \( \mcal L_k \cap \f z = \mcal D_k \), since \( [\mcal D_k, \mcal L_k] = 0 \) and \( \ep^T_{kj} = 0 \) for all \( j \ne k \).
\smallskip

    Case (b). The statement about the center is clear: if \( j \notin K_i \) and \( j \notin J \), then \( \mcal L_j \subseteq \f z \).
If \( j \notin K_i \) and \( j \in J \), then \( \ep^T_{jj} \ne 0= \ep^T_{jk}\) for all $k\ne j$, so \( \f z \cap \mcal L_j = \mcal D_j \).
Moreover, if \( j \in K_i \), then there exists \( k \in I \) such that \( jk \in T \), and thus \( \f z \cap \mcal L_j = 0 \) by Lemma~\ref{le_comovanloscorchetes}(d).

The computation of the derived algebra is also straightforward: since \( 00 \in T \), we have \( \f{der}(\mcal J) = \mcal L_0 \subseteq \mcal L'_T \).
Moreover, \( \sum_{k \in J_i} \mcal L_k \subseteq \mcal L'_T \) follows from the definition of \( J_i \) and the fact that \( [\mcal L_i, \mcal L_j] = \mcal L_{i*j} \) for 
distinct \( i, j \in I \). Finally, if \( j \in J \setminus  J_i \), then \( 0j \in T \), so
\(
[\mcal L_0, \mcal L_j]^{\ep^T} = [\mcal L_0, \mcal L_j] = \mcal M_j,
\)
and no nonzero element in \( \mcal D_j \) can appear as a bracket.

Since \( \f{der}(\mcal J)=\mcal L_0\le \mcal L_T \) is simple and \( \f{der}(\mcal O) \oplus \mcal O_0 \otimes \mcal J_0 \) is a solvable ideal of $\mcal L_T$, the Levi decomposition follows. The indices of \( \f r \) can be deduced from the case \( \mcal L_{S_i} \).
 \smallskip
  
Case (c). Note that \( P_{\{0,j,j\}} \subseteq T \) implies that \( \mcal L_0 \oplus \mcal L_j \) is a Lie subalgebra of \( \mcal L_T \), isomorphic to the direct sum of \( \f{innstr}(\mcal J) \) and a two-dimensional center (namely, \( \mcal D_j \)).

Let us check that the subspace \( R = \mcal D_j \oplus \sum_{k\ne j} \mcal L_k \)  is solvable. 
As $j\notin K_i$, \( [R, R]^{\ep^T} = [ \sum_{k \ne j} \mcal L_k, \sum_{k \ne j} \mcal L_k]^{\ep^T}    \). Thus
\([R, R]^{\ep^T} =\sum_{k \in J_i} \mcal L_k \), since any  $k\in J_i$ is of the form $k=k_1*k_2$ with $k_1,k_2\in K_i$, so that $k_1,k_2\ne j$ and $\mcal L_k=[\mcal L_{k_1},\mcal L_{k_2}]^{\ep^T}$.
Now, clearly \( R^{(2)} = 0 \).
Since \( [\mcal L_0 \oplus \mcal M_j, R] \subseteq R \), we have \( [\mcal L_0 \oplus \mcal M_j, R]^{\ep^T} \subseteq R \) as well, and hence \( R \) becomes the radical of $\mcal L_T$.

To compute the center, recall  again  that \( j \notin K_i \), so \( jj, j0 \in T \) and \( jk \notin T \) for all \( k \ne j \), which implies \( \f z \cap \mcal L_j = \mcal D_j \).
\end{proof}

It is not difficult to describe uniformly the action of \( \mcal L_{ss} \) on the radical.  
It coincides with the natural action of \( \f{der}(\mcal J) \), \( \f{innstr}(\mcal J) \), or \( \f{tkk}(\mcal J) \) on \( \f r \), depending on the chosen algebra \( \mcal C \).  
In all cases, the radical decomposes as a direct sum of several copies of the trivial module and several copies of the module \( \mcal J \), depending on the support.
\smallskip

Until this point, we have described the properties of the algebras \( (\mathcal{T}(\mathcal{C}))^{\varepsilon^T} \) for each generalised nice set \( T \)--- moreover, these properties coincide with those of \( (\mathcal{T}(\mathcal{C}))^{\varepsilon} \) for any other graded contraction \( \varepsilon \) with the same support \( T \).  
A second and important question is how many non-isomorphic algebras arise within this family.

The fact that, in Theorem~\ref{teo_listaGNS}, all the described sets are non-collinear does not imply that they give rise to non-isomorphic algebras. We write \( T \simeq T' \) if \( \varepsilon^T \sim \varepsilon^{T'} \), that is, if there exists an isomorphism of \( \mathbb{Z}_2^3 \)-graded Lie algebras \( f \colon \mathcal{L}_T \to \mathcal{L}_{T'} \). 
  (A priori, this notion may depend on the Hurwitz algebra involved.)   
 The condition means that there exists a map \( \sigma \colon I_0 \to I_0 \) such that \( f(\mathcal{L}_i) = \mathcal{L}_{\sigma(i)} \) for every \( i \in I_0 \). 
  Clearly, \( \sigma(0) = 0 \) and \( \sigma\vert_I \) is a bijection, with \( T' = \tilde\sigma(T) \), where we define \( \tilde\sigma(ij) = \sigma(i)\sigma(j) \). 
It seems natural to believe that non-collinear supports yield distinct algebras because,
at least if $jk\in T$,  we prove below that $\sigma(j)*\sigma(k)=\sigma(j*k)$. 
(Recall that the converse is true by Eq.~\eqref{eq_extendiendo_colin_a_iso}, if \( \sigma \in S_\ast(I) \) satisfies \( T' = \tilde\sigma(T) \), then there exists an isomorphism \( f \colon \mcal L_T \to \mcal L_{T'} \) with associated map \( \sigma \).) However, as a     consequence of the proof   
of Proposition~\ref{pr_casolosSi}, the map \( \sigma \) associated with an isomorphism \( f \) as above does not necessarily have to be a collineation.
To be precise, we proved that \( \ep^{S_4} \sim \ep^{S_5} \), although \( S_4 \) and \( S_5 \) are not collinear. The same situation occurred when we classified the graded contractions of \( \Gamma_{\mathfrak{g}_2} \). In that setting, this was the only exception, so that we obtained 23 non-isomorphic algebras out of 24 nice sets.
The natural question that arises at this point is: how many distinct Lie algebras result from our 245 non-collinear generalised nice sets?  
We will address this question in Theorem~\ref{teo_representantes}.  
It turns out that many of the resulting algebras are non-isomorphic, but we should be more precise.

 Many of the cases will be studied in a fairly unified way, making use of the following technical lemma.
 
 \begin{lemma}\label{le_util paranoisomorfas} 
For each  generalised nice set $T$ and $i \in I_0$,  let us denote by $n_{i,T} = |\{j \in I : ij \in T\}|$.

If $f\colon \mcal L_{T} \to \mcal L_{T'}$ is a graded Lie algebra isomorphism for two generalised nice sets $T$ and $T'$, and $\sigma\colon I_0 \to I_0$ is the bijection such that $f(\mcal L_{i}) = \mcal L_{\sigma(i)}$, then:
\begin{itemize}
    \item[\rm(i)] For any $i,j \in I_0$, we have $ij \in T$ if and only if $\sigma(i)\sigma(j) \in T'$.
    
    \item[\rm(ii)] If $ij \in T$, then $\sigma(i*j) = \sigma(i) * \sigma(j)$.
    
    \item[\rm(iii)] $\sigma(0) = 0$; $\tilde\sigma(E_I) = E_I$; $\tilde\sigma(F_I) = F_I$; and $\tilde\sigma(T \cap X) = T' \cap X$.
    
    \item[\rm(iv)] $T \subseteq X$ if and only if $T' \subseteq X$; \ 
    $T \subseteq X_0 \setminus X$ if and only if $T' \subseteq X_0 \setminus X$;
  \  $T \cap X$ is not generalised nice if and only if $T' \cap X$ is not generalised nice;
    \  and $T \cap (X_0 \setminus X) \subseteq E_I$ (respectively, contained in $F_I$ or in $P_{\{0,j,j\}}$) if and only if the same holds for $T'$.
    
    \item[\rm(v)] For any $i \in I_0$, we have $n_{i,T} = n_{\sigma(i),T'}$.
    
    \item[\rm(vi)] If $T \cap X = S_i$ and $T' \cap X = S_j$ for $i,j \in \{0,1,\dots,13\} \setminus \{5\}$, then $i = j$.
\end{itemize}
\end{lemma}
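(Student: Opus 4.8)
The plan is to prove (i)--(vi) in that order, each part feeding the next. Throughout, fix the Hurwitz algebra: since $\dim\mcal T(\mcal C)\in\{52,78,133,248\}$ are pairwise distinct, the existence of $f\colon\mcal L_T\to\mcal L_{T'}$ forces both algebras to be built from the same $\mcal C$. Recall that $\mcal L_T=(\mcal T(\mcal C))^{\ep^T}$ carries the $G$-grading with $(\mcal L_T)_{g_i}=\mcal L_i$, so $[\mcal L_i,\mcal L_j]^{\ep^T}=\ep^T_{ij}[\mcal L_i,\mcal L_j]\subseteq\mcal L_{i\ast j}$. For (i): applying $f$ gives $f([\mcal L_i,\mcal L_j]^{\ep^T})=[\mcal L_{\sigma(i)},\mcal L_{\sigma(j)}]^{\ep^{T'}}$; by Lemma~\ref{le_simetrico} one has $[\mcal L_i,\mcal L_j]\ne0$ for all $i,j$, so $[\mcal L_i,\mcal L_j]^{\ep^T}\ne0$ precisely when $ij\in T$, and likewise on the $T'$-side, whence $ij\in T\iff\sigma(i)\sigma(j)\in T'$; since $\sigma$ is a bijection of $I_0$ this also shows $\tilde\sigma(T)=T'$. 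For (ii): if $ij\in T$ then $0\ne[\mcal L_i,\mcal L_j]^{\ep^T}\subseteq\mcal L_{i\ast j}$, so its nonzero image under $f$ lies both in $f(\mcal L_{i\ast j})=\mcal L_{\sigma(i\ast j)}$ and in $[\mcal L_{\sigma(i)},\mcal L_{\sigma(j)}]^{\ep^{T'}}\subseteq\mcal L_{\sigma(i)\ast\sigma(j)}$; distinct homogeneous components meet in $0$, hence $\sigma(i\ast j)=\sigma(i)\ast\sigma(j)$. For (iii): $\sigma(0)=0$ because $\dim\mcal L_0\in\{3,8,21,52\}$ while $\dim\mcal L_k=3\dim\mcal C+4\in\{7,10,16,28\}$ for $k\ne0$, and $f$ preserves dimensions of homogeneous components; then $\sigma|_I$ is a bijection of $I$, whence $\tilde\sigma(E_I)=E_I$, $\tilde\sigma(F_I)=F_I$, and $\tilde\sigma(T\cap X)=\tilde\sigma(T)\cap\tilde\sigma(X)=T'\cap X$.

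For (iv): all the assertions except the one about generalised niceness follow at once from $\tilde\sigma(T)=T'$ together with the identities $\tilde\sigma(X)=X$, $\tilde\sigma(X_0\setminus X)=X_0\setminus X$, $\tilde\sigma(E_I)=E_I$, $\tilde\sigma(F_I)=F_I$, and $\tilde\sigma(P_{\{0,j,j\}})=P_{\{0,\sigma(j),\sigma(j)\}}$. For the remaining equivalence, suppose $T\cap X$ is not generalised nice, so there are $i,j,k\in I_0$ with $\{i,j\},\{i\ast j,k\}\in T\cap X$ but some element of $P_{\{i,j,k\}}$ missing from $T\cap X$. Since the hypothesis says $T$ itself is a generalised nice set and $\{i,j\},\{i\ast j,k\}\in T$, we get $P_{\{i,j,k\}}\subseteq T$; in particular $\{j,k\},\{k,i\}\in T$, so (ii) applies to each of $\{i,j\},\{j,k\},\{k,i\}$ and gives $\tilde\sigma(P_{\{i,j,k\}})=P_{\{\sigma(i),\sigma(j),\sigma(k)\}}$. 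Now $\tilde\sigma(\{i,j\}),\tilde\sigma(\{i\ast j,k\})\in T'\cap X$ by (iii), while the element of $P_{\{i,j,k\}}$ missing from $T\cap X$ maps, under the bijection $\tilde\sigma$ with $\tilde\sigma(T\cap X)=T'\cap X$, to an element of $P_{\{\sigma(i),\sigma(j),\sigma(k)\}}$ missing from $T'\cap X$; hence $T'\cap X$ is not generalised nice. Applying the argument to $f^{-1}$ yields the converse.

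For (v): by (i) the map $j\mapsto\sigma(j)$ restricts to a bijection $\{j\in I:ij\in T\}\to\{j'\in I:\sigma(i)j'\in T'\}$, so $n_{i,T}=n_{\sigma(i),T'}$. For (vi): by (iii), $\tilde\sigma(S_i)=\tilde\sigma(T\cap X)=T'\cap X=S_j$, so the edge sets $S_i$ and $S_j$ on the vertex set $I$ are related by the vertex permutation $\sigma|_I$; hence they have the same number of edges and the same multiset of vertex degrees. An inspection of Example~\ref{ex_nice=GNS_losSi} shows that, once $S_5$ is removed, the pair consisting of the number of edges together with the multiset of nonzero degrees separates the $S_i$: the cardinalities $0,1,5$ occur for a single $S_i$ each; with two edges $S_2$ has degrees $2,1,1$ and $S_3$ has degrees $1,1,1,1$; with three edges $S_4,S_6,S_7,S_8$ have degrees $3,1,1,1$;\ $2,2,2$;\ $2,2,1,1$;\ $1,1,1,1,1,1$, respectively; with four edges $S_9$ has degrees $3,2,2,1$ and $S_{10}$ has degrees $2,2,2,2$; and with six edges $S_{12}$ has degrees $3,3,3,3$ and $S_{13}$ has degrees $3,3,3,1,1,1$. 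Therefore $i=j$.

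Parts (i)--(iii) and (v) are formal consequences of gradedness and Lemma~\ref{le_simetrico}; the genuine content is the generalised-niceness equivalence in (iv)---whose crux is that the hypothesis already provides $P_{\{i,j,k\}}\subseteq T$, so that (ii) can be invoked for \emph{every} pair occurring in $P_{\{i,j,k\}}$, not merely for $\{i,j\}$---and the finite, somewhat tedious, case check underlying (vi). I do not expect any single step to be a serious obstacle once (i) and (ii) are in place.
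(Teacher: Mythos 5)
Your proof is correct and follows essentially the same route as the paper's: (i)--(ii) via Lemma~\ref{le_simetrico} and the homogeneity of $f$, (iii) via the dimension count $\dim\mathcal{L}_0\neq\dim\mathcal{L}_k$ for $k\neq 0$, (v) as an immediate consequence of (i), and (vi) via the ordered degree sequences of the $S_i$ (your multisets coincide exactly with the paper's $N_i$). The one genuine divergence is the ``generalised niceness'' clause of (iv): the paper disposes of it by invoking the companion paper's characterisation (\cite[Proposition~3.1]{draper2024generalised}, which says that for a generalised nice $T$ the failure of $T\cap X$ to be generalised nice means either $T\cap X$ is not nice or some $\{i,j\},\{i,i*j\}\in T$), and then transports those two conditions by (i)--(ii); you instead argue directly from the definition, using that the witnessing pair forces $P_{\{i,j,k\}}\subseteq T$ so that (ii) applies to every pair in $P_{\{i,j,k\}}$ and hence $\tilde\sigma(P_{\{i,j,k\}})=P_{\{\sigma(i),\sigma(j),\sigma(k)\}}$, with the missing element necessarily lying in $X_0\setminus X$ and therefore mapping outside $T'\cap X$. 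Your version is self-contained (no appeal to the external classification paper) at the cost of a slightly longer in-line argument; both are valid.
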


 \begin{proof} (i) For any $i,j\in I_0$,   there are  $x\in\mcal L_i$ and $y\in\mcal L_j$ such that $[x,y]\ne0$.
 If $ij\in T$, then $[x,y]^{\ep^T}=[x,y]\ne0$, also $0\ne f([x,y]^{\ep^T})=[f(x),f(y)]^{\ep^{T'}}$. 
 As $f(x)\in\mcal L_{\sigma(i)}$ and $f(y)\in\mcal L_{\sigma(j)}$, 
 this means that $\ep^{T'}_{\sigma(i)\sigma(j)}\ne0$, in other words,
 $ \sigma(i)\sigma(j)\in T'$. We have proved more: since 
 $[f(x),f(y)]^{\ep^{T'}}\in \mcal L_{ \sigma(i)*\sigma(j)}$ equals $f([x,y]^{\ep^T})\in\mcal L_{\sigma(i*j)}$ and is nonzero, then 
 $\sigma(i*j)=\sigma(i)*\sigma(j)$, and (ii) follows too. (Note that this does not mean that $\sigma$ is a collineation.)

    (iii) It is clear that $\sigma(0)=0$ since $\dim\mcal L_0\ne \dim\mcal L_i$ if $i\ne0$. 
    The remaining statements are then evident.

    (iv) We want to prove that \( T \) and \( T' \) belong to the same class of generalised nice sets, according to the description in Section~\ref{se_GNS}.  
Since \( \tilde{\sigma} \) preserves both \( X \) and \( X_0 \setminus X \), the first two cases are immediate.  
If \( T \) is contained in neither \( X \) nor \( X_0 \setminus X \), and \( T \cap X \) is not generalised nice, then either \( T \cap X \) is not a nice set,  
or there exist \( i \ne j \) in \( I \) such that \( \{i,j\}, \{i, i * j\} \in T \), by \cite[Proposition~3.1]{draper2024generalised}.  
Now, by (i) and~(ii), it follows that either \( T' \cap X \) is not a nice set,  
or \( \{\sigma(i), \sigma(j)\}, \{\sigma(i), \sigma(i) * \sigma(j)\} \in T' \), so \( T' \cap X \) is not generalised nice either.  
Item~(iii) immediately covers the last case.

    (v) is clear from (i).

    (vi) Simply note that the sequences \( N_i = (n_{k, S_i} : k \in K_i) \), ordered from largest to smallest, are all different.  
This is evident for the sets \( S_i \) of different cardinalities---the sum of the entries in the sequence \( N_i \) equals \( 2|S_i| \)---and very easy to check for those with the same cardinality:
\begin{itemize}
    \item (Cardinality 2) \( N_2 = (2,1,1) \) and \( N_3 = (1,1,1,1) \);
    \item (Cardinality 3) \( N_4 = (3,1,1,1) \), \( N_6 = (2,2,2) \), \( N_7 = (2,2,1,1) \), and \( N_8 = (1,1,1,1,1,1) \);
    \item (Cardinality 4) \( N_9 = (3,2,2,1) \) and \( N_{10} = (2,2,2,2) \);
    \item (Cardinality 6) \( N_{12} = (3,3,3,3) \) and \( N_{13} = (3,3,3,1,1,1) \).
\end{itemize}
 \end{proof}

From this point, a careful analysis allows us to determine how many non-isomorphic Lie algebras can be obtained by applying the generic graded contractions \( \varepsilon^T \) to the grading \( \Gamma_{\mathcal{L}} \), as \( T \) ranges over all generalised nice sets.

 \begin{theorem}\label{teo_representantes}
 
Let \( \mathcal{C} \)   be a fixed Hurwitz algebra, and let \( \mathcal{L} = \mathcal{T}(\mathcal{C}) \). Then there are exactly \( 215 \) non-isomorphic Lie algebras in the set
\[
\{ \mathcal{L}_T : T \text{ is a generalised nice set} \}.
\]
A list of representatives is given by 
      \begin{enumerate} 
         \item $Y_{7}$, $Y_{10}$, $Y_{11}$, $Y_{15}$, $Y_{19}$, $Y_{26}$,   and $X_0$;
         \item $  P_{\{0,1,1\}}$, $S_1\cup P_{\{0,3,3\}}$, $S_2\cup P_{\{0,4,4\}}$,
           $S_3\cup P_{\{0,3,3\}}$, $S_6\cup P_{\{0,7,7\}}$,
         $S_7\cup P_{\{0,4,4\}}$, and $S_{10}\cup P_{\{0,4,4\}}$;
         
         \item $E_J$ and $F_J$ for $J=\emptyset,1,12,123, 1234, 12345,123456,I$;
         \item $S_i\cup E_J$ for
         \begin{itemize}  
             \item[-] $i=1$, $J=\emptyset,1,3,12,13,34, 123,134,346, 1234,3467, 1367,12346,13467,123467 $;
             \item[-] $i=2 $, $J=\emptyset, 1,2,4, 12,14, 23,24, 47,123,124, 234,247,147, 1234, 1247,2347,12347$;
              \item[-] $i=3 $, $J=\emptyset, 1,3,12,13,16,34,123,126,134, 136,1234,1267,1236,1346,12346,\\12367,123467 $;
               \item[-] $i= 4$, $J=\emptyset, 1,2,12,23,123,234,1234$;
                 \item[-] $i=6 $, $J=\emptyset,1,7,12,17,126,127,1267 $;
                  \item[-] $i= 7$, $J=\emptyset,1,2,4,12,14,16,17,24,27,124,126,127,146,247,147,1267,1246,\\1247,12467 $;
                   \item[-] $i= 8$, $J=\emptyset, 2,23,25,234,235, 2356,2345,23456,234567$;
                    \item[-] $i= 9$, $J=\emptyset,1,2,7,12,17,26,27,126,127,267,1267 $;
                     \item[-] $i=10 $, $J=\emptyset,1,4,12,14,17,124,126,147,1267,1246,12467 $;
                      \item[-] $i=11 $, $J=\emptyset,1,6,12,16,67,126,167,1267 $;
                       \item[-] $i=12 $, $J=\emptyset, 3,34,346,3467$;
                        \item[-] $i= 13$, $J=\emptyset, 1,12,123$.
         \end{itemize}
         \item $S_i\cup F_J$ for
         \begin{itemize} 
          \item[-] $i=1$, $J=\emptyset, 3,34, 125,347,3467,1235,12567, 123456,I$;
           \item[-] $i=2$, $J= \emptyset,4, 47,12356,123456, I$;
            \item[-] $i=3$, $J=\emptyset, 3,34,12567,123567,I$;
             \item[-] $i=4, 8,9,11,12,13$, $J=\emptyset, I$;
              \item[-] $i=6$, $J= \emptyset,7,123456,I$;
              \item[-] $i=7,10$, $J= \emptyset,4,123567,I$.
         \end{itemize}
     \end{enumerate}
 \end{theorem}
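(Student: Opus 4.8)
The plan is to prove the statement in two halves: that the $215$ sets in the displayed list give pairwise non-isomorphic graded Lie algebras, and that every $\mathcal{L}_T$ is graded-isomorphic to one of them. For the second half, recall that a collineation $\sigma\in S_\ast(I)$ induces a graded isomorphism via Eq.~\eqref{eq_extendiendo_colin_a_iso}, so by Theorem~\ref{teo_listaGNS} there are at most $245$ classes --- one per collineation orbit --- and the task is to exhibit the non-collineation isomorphisms collapsing the $30$ excluded sets onto listed ones, and to verify that nothing more collapses. The two tools are the structural invariants computed in Propositions~\ref{pr_casolosSi}--\ref{pr_casogeneral} (Levi factor type, dimension of the centre, dimension sequences of the derived and lower central series, solvability and nilpotency indices) and the rigidity of Lemma~\ref{le_util paranoisomorfas}.

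First I would invoke Lemma~\ref{le_util paranoisomorfas} to isolate the essential case. A graded isomorphism $f\colon\mathcal{L}_T\to\mathcal{L}_{T'}$ comes with a bijection $\sigma\colon I_0\to I_0$ fixing $0$, with $\tilde\sigma(T)=T'$ and $\sigma(i\ast j)=\sigma(i)\ast\sigma(j)$ whenever $ij\in T$ (parts (i)--(ii)); by parts (iii)--(iv) this forces $T$ and $T'$ to belong to the same family of Section~\ref{se_GNS} --- both in $X$, both in $X_0\setminus X$ of the same $E$/$F$/$P$-type, both among the seven exceptions of Example~\ref{ex_las7excepciones}, or both of mixed type $S_i\cup(\text{decoration})$ of the same decoration-type --- and by part (vi) the index $i$ is itself an invariant as long as it avoids $5$. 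The first three families are then done: $T\subseteq X$ by Proposition~\ref{pr_casolosSi} (only coincidence $\mathcal{L}_{S_4}\cong\mathcal{L}_{S_5}$, hence $13$ classes), $T\subseteq X_0\setminus X$ by Propositions~\ref{pr_losEsyFs} and~\ref{prop_elnoEniF} (classified by $|J|$ for $E_J$ and $F_J$, one class for $P_{\{0,j,j\}}$, so $8+8+1$ classes), and the exceptions by Proposition~\ref{pr_casolosYiexcepciones} (the seven distinct algebras are separated by Levi type, centre dimension and solvability index). The remaining work is entirely inside the mixed families, for a fixed $i\in\{1,\dots,13\}\setminus\{5\}$.

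For that, fix $i\neq 5$ and let $G_i$ be the group of permutations $\sigma$ of $I_0$ with $\sigma(0)=0$, $\tilde\sigma(S_i)=S_i$, and $\sigma(j\ast k)=\sigma(j)\ast\sigma(k)$ for all $jk\in S_i$; note that $G_i$ may be strictly larger than the collineation stabiliser of $S_i$, and this excess is exactly what creates extra coincidences. By the previous paragraph, the bijection attached to an isomorphism $\mathcal{L}_{S_i\cup D}\to\mathcal{L}_{S_i\cup D'}$ lies in $G_i$ with $\tilde\sigma(D)=D'$; conversely --- the crux of the argument --- every $\sigma\in G_i$ sending an admissible decoration to an admissible one extends to a bona fide graded Lie algebra isomorphism, built by imitating the passage from $\varphi$ to $\tilde\varphi$ in the proof of Proposition~\ref{pr_casolosSi}: one sets $\tilde\varphi|_{\mathfrak{der}(\mathcal{J})}=\mathrm{id}$, takes for $\tilde\varphi|_{\mathcal{D}}$ the graded isomorphism of $\mathfrak{der}(\mathcal{O})^{\varepsilon^{S_i}}$ furnished by the method of \cite{draper2024gradedg2} (possible because the nonzero brackets there depend only on $S_i$, which $\sigma$ preserves), and $\tilde\varphi(e_j\otimes u)=\pm\, e_{\sigma(j)}\otimes u$ with signs dictated by $e_je_k=\pm e_{j\ast k}$, then verifies compatibility with Tits' product~\eqref{eq_TitsProduct}. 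One must also record the ``cross'' isomorphism: the non-collineation $\sigma_0$ that fixes $1,2,3,5,6$ and swaps $4\leftrightarrow 7$ has $\tilde\sigma_0(S_4)=S_5$, extends as above, and carries the admissible decorations of $S_4$ bijectively onto those of $S_5$, so the whole $i=5$ family is redundant. Granting this, the number of classes with $T\cap X=S_i$ equals the number of $G_i$-orbits of admissible decorations ($J\subseteq I\setminus J_i$ for $E_J$; for $F_J$ the $J$ meeting each $\{j,k,j\ast k\}$, $jk\in S_i$, either in $\emptyset$ or entirely; $j\notin K_i\cup J_i$ for $P_{\{0,j,j\}}$), a finite computation for each $i$ from Table~\ref{tab:table_KiJi}.

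The main obstacle is this converse direction --- showing that each $\sigma\in G_i$, which in general is \emph{not} a collineation and hence is \emph{not} supplied by Eq.~\eqref{eq_extendiendo_colin_a_iso}, still lifts to a map respecting~\eqref{eq_TitsProduct}. The genuinely delicate part is the sign bookkeeping: the products $e_je_k=\pm e_{j\ast k}$ together with $D_{e_j,e_k}=-D_{e_k,e_j}$ have to be matched consistently across all brackets at once, exactly the kind of verification already carried out for the isolated case $\mathcal{L}_{S_4}\cong\mathcal{L}_{S_5}$ in the proof of Proposition~\ref{pr_casolosSi}. Once the lifting is secured, what remains is the combinatorial enumeration of the $G_i$-orbits for the twelve relevant values of $i$, of the $|J|$-classes for $E_J$ and $F_J$, and of the seven exceptional classes, together with the final bookkeeping that the totals $7+7+16+139+46$ sum to $215$ and that the displayed list contains exactly one representative of each class.
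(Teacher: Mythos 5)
Your overall strategy --- necessity of the combinatorial constraints via Lemma~\ref{le_util paranoisomorfas}, explicit isomorphisms for sufficiency, then a count --- is the same as the paper's, and your arithmetic $7+7+16+139+46=215$ is correct. But the proposal leaves unproved exactly the two steps that constitute essentially all of the paper's proof. First, your reduction ``the number of classes with $T\cap X=S_i$ equals the number of $G_i$-orbits of admissible decorations'' rests on the claim that \emph{every} $\sigma\in G_i$ carrying one admissible decoration to another lifts to a graded isomorphism of the contracted Tits algebras. You flag the sign bookkeeping as ``the genuinely delicate part'' but never carry it out, and the claim as stated is stronger than anything the paper establishes: the paper never computes $G_i$ and proves no uniform lifting lemma. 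Instead it constructs, one family at a time (items (i)--(x) of its proof), only the specific maps needed --- either component permutations built from the $\hat\theta_{ij}$, which work precisely because the permuted components interact through no cross-bracket $[\mathcal{M}_j,\mathcal{M}_k]$ with $j\ne k$ (so no octonion signs ever arise), or the sign-adjusted maps $\hat\varphi_{ij}$ supported on a single Fano line --- and verifies each against Eq.~\eqref{eq_TitsProduct}. Whether an arbitrary element of $G_i$ (which need not be a collineation and need not preserve any line) admits such a lift is exactly the question your argument must answer, and it is not answered; if the lift fails for even one $\sigma$, your orbit count no longer equals the number of isomorphism classes.

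Second, even granting the lifting, the determination of the orbits --- equivalently, the proof that no further coincidences occur among the $139+46$ mixed sets --- is dismissed as ``a finite computation'' but never performed. This is where the paper spends most of its effort: for each $i$ it extracts invariants of the bijection $\sigma$ (the pair $(|J|,|K_i\cap J|)$, the frequencies $n_{k,S_i}$ which force $\sigma$ to preserve certain index blocks, and the constraint $\sigma(J_i)=J_i$ coming from multiplicativity on the pairs of $S_i$) and checks case by case that the listed decorations are pairwise inequivalent --- e.g.\ $S_3\cup E_{12}\not\simeq S_3\cup E_{16}$ because $\sigma(5)=5$ forces $\sigma(\{1,2\})\ne\{1,6\}$, since $1*2=5\ne 1*6$. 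Without both the lifting argument and this enumeration, the proposal is a correct plan with the right invariants identified, but not a proof.
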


 \begin{proof}
  Recalling Theorem~\ref{teo_listaGNS}, it will suffice to check each of the following:
\begin{itemize}
    \item[\rm(i)]  $S_2\cup P_{\{0,7,7\}}\simeq S_2\cup P_{\{0,4,4\}}$;
    \item[\rm(ii)]  $E_{125}\simeq E_{123}$, $E_{1235}\simeq E_{1234}$, $F_{125}\simeq F_{123}$ and $F_{1235}\simeq F_{1234}$;
    \item[\rm(iii)]  $S_1\cup E_{36}\simeq S_1\cup E_{34}$, $S_1\cup E_{136}\simeq S_1\cup E_{134}$ and $S_1\cup E_{1236}\simeq S_1\cup E_{1234}$;
    \item[\rm(iv)]  $S_2\cup E_{7}\simeq S_2\cup E_{4}$, $S_2\cup E_{17}\simeq S_2\cup E_{14}$, 
    $S_2\cup E_{27}\simeq S_2\cup E_{24}$, 
    $S_2\cup E_{127}\simeq S_2\cup E_{124}$, $S_2\cup E_{237}\simeq S_2\cup E_{234}$ and $S_2\cup E_{1237}\simeq S_2\cup E_{1234}$;
    \item[\rm(v)]  $S_3\cup E_{137}\simeq S_3\cup E_{136}$;
    \item[\rm(vi)]  $S_4\cup E_{J}\simeq S_5\cup E_{J'}$ where $J'$   is equal to $J$ except that each $4\in J$ is replaced by a 7 in $J'$, for $J$ 
    any of the subsets in Theorem~\ref{teo_listaGNS}(4);
    \item[\rm(vii)]  $S_8\cup E_{237}\simeq S_8\cup E_{234}$;
    \item[\rm(viii)]  $S_1\cup F_{37}\simeq S_1\cup F_{34}$ and $S_1\cup F_{12356}\simeq S_1\cup F_{12567}$;
    \item[\rm(ix)]  $S_2\cup F_{7}\simeq S_2\cup F_{4}$ and $S_2\cup F_{123567}\simeq S_2\cup F_{123456}$;
    \item[\rm(x)]    $S_4\cup F_\emptyset \simeq S_5 \cup F_\emptyset$    and $S_4\cup F_I\simeq S_5\cup F_I$;
    \item[\rm(xi)]  None of the remaining generalised nice sets could give   rise to isomorphic Lie algebras.
\end{itemize}

The arguments are all similar, so we present only those that introduce something new.  
For any \( i, j \in I \), we can find a linear bijection \( \theta_{ij} \colon \mathcal{D}_i \to \mathcal{D}_j \).  
We extend \( \theta_{ij} \) to a linear map \( \hat\theta_{ij} \colon \mathcal{L}_i \to \mathcal{L}_j \)  
by defining \( \hat\theta_{ij}|_{\mathcal{D}_i} = \theta_{ij} \), and  
\( \hat\theta_{ij}(e_i \otimes u) = e_j \otimes u \) for any \( u \in \mathcal{J}_0 \).

  For (i), recall that $T=\{12,13,00,04,44\}$ is not collinear to $T'=\{12,13,00,07,77\}$. 
 Now simply take the linear isomorphism $f\colon \mcal L \to \mcal L $ given by
 \begin{equation}\label{eq_iso47}
   \begin{array}{l}
      f\vert_{\mcal L_i}=\id \qquad \forall i\in\{0,1,2,3,5,6\}, \\
      f\vert_{\mcal L_4}=\hat\theta_{47},\qquad
 f\vert_{\mcal L_7}=\hat\theta_{74}.\\
 \end{array}  
 \end{equation} 
 Although  $f\colon \mcal L \to \mcal L$ is not an isomorphism of Lie algebras, it is easy to show that $f\colon \mcal L_{T}\to \mcal L_{T'}$ is a Lie algebra isomorphism. 
 Indeed, it suffices to check the action of \( f \) on the brackets \( [\mathcal{L}_0, \mathcal{L}_4] \) and \( [\mathcal{L}_4, \mathcal{L}_4] \), using Eq.~\eqref{eq_TitsProduct}.
 Notice first that $[\mcal L_0,\mcal D_4]=0=[\mcal L_0,\mcal D_7]$. 
 Second, for $D\in\mcal L_0$ and $u\in\mcal J_0$, then 
 $f([D,e_4\otimes u])=e_7\otimes D(u)=[f(D),f(e_4\otimes u)]$.
 Third, since $D_{e_i,e_i}=0$ and $[e_i,e_i]=0$ for any $i\in I$, then $[e_i\otimes u,e_i\otimes v]=-4[R_u,R_v]$
 and then  $f([e_4\otimes u,e_4\otimes v])=-f(4[R_u,R_v])=-4[R_u,R_v]=[e_7\otimes u,e_7\otimes v]=[f(e_4\otimes u),f(e_4\otimes v)]$. Fourth, recall that $[\mcal D_i,\mcal M_i]=0$ for both $i=4,7$. And finally $[\mcal D_i,\mcal D_i]=0$ too.

 For (ii), let us check that $E_J\simeq E_{J'}$ whenever  $|J|=|J'|$. Take $\sigma\colon I\to I$ any bijection such that $\sigma(J)=J'$.  
  Now define $f\vert_{\mcal L_0}=\id $, 
 $f\vert_{\mcal L_i}=\hat\theta_{i\sigma(i)}$ for any $i\in I$.
  Straightforward computation reveals that $f\colon \mcal L_{E_{J}}\to \mcal L_{E_{J'}}$ is an isomorphism of graded algebras: As $[\mcal D_i,\mcal D_i]=0=[\mcal D_i,\mcal M_i]$, we  need only check the restrictions $f\vert_{[\mcal M_i, \mcal M_i]}$ if $i\in J$, but recall that $[e_i\otimes u,e_i\otimes v]=-4[R_u,R_v]$ does not depend on $i$ and $f(-4[R_u,R_v])=-4[R_u,R_v]$.   The same map $f$ is also an isomorphism $f\colon \mcal L_{F_{J}}\to \mcal L_{F_{J'}}$  and so we conclude that $F_J\simeq F_{J'}$ whenever  $|J|=|J'|$.

 Similarly $f\colon \mcal L_{T}\to \mcal L_{T'}$ defined by 
 $f\vert_{\mcal L_i}=\id  $ for all $ i\in\{0,1,2,3,5,7\}$,  
      $f\vert_{\mcal L_4}=\hat\theta_{46}$, 
 $f\vert_{\mcal L_6}=\hat\theta_{64}$, is an isomorphism for any of the pairs $(T,T')$ in item (iii).
  Again, $f\colon \mcal L_{T}\to \mcal L_{T'}$ defined by 
 $f\vert_{\mcal L_i}=\id  $ for all $ i\in\{0,1,2,3,5,6\}$,  
      $f\vert_{\mcal L_4}=\hat\theta_{47}$, 
 $f\vert_{\mcal L_7}=\hat\theta_{74}$, is an isomorphism for any of the pairs $(T,T')$ in item (iv).
  In both (iii) and (iv), $T=S_i\cup E_J$ and $f\vert_{\mcal L_k}=\id $ for any $k\in J_i\cup K_i$. 
  Therefore, all the remaining checks
 concern $E_J$ and they work   much as in the two previous paragraphs.
 
 Note that the isomorphism \( f \colon \mathcal{L}_{E_J} \to \mathcal{L}_{E_{J'}} \), defined as \( \tilde{\varphi} \) in the proof of Proposition~\ref{pr_casolosSi}, is valid for all the cases in~(vi) and~(x).
 A minor modification  of $f$ yields the required isomorphism in several additional cases.
    Fix $\ell=(i,j,i*j)$ as one of the ordered lines of the Fano plane and $l\in \ell^c=I\setminus\ell$.   We can find three derivations $x_k\in\f{der}(\mathcal{O})$ determined by $x_k(e_i)=0=x_k(e_j)$ and $x_k(e_l)=\frac12e_ke_l$ for $k\in\ell$, and   another three by $y_{i_1}=\frac14D_{e_{i_2},e_{i_3}}$, for $(i_1,i_2,i_3)$ any of the three ordered permutations of $\ell$. The map 
 $$
    \begin{array}{ccc}
    x_{i*j}\mapsto x_{i*j},  & x_i\mapsto x_j,& x_j\mapsto -x_i,\\
     y_{i*j}\mapsto y_{i*j}, &  y_i\mapsto y_j,& y_j\mapsto -y_i,
     \end{array}
    $$
     is a Lie algebra automorphism of $\mcal D_{i}\oplus \mcal D_{j}\oplus \mcal D_{i*j} $. 
     We extend this to a linear isomorphism \( \varphi_{ij} \colon \mcal D \to \mcal D \) by setting \( \varphi_{ij}\vert_{\mcal D_k} = \id \) for all \( k \notin \ell \), and then further extend it to a linear bijection \( \hat{\varphi}_{ij} \colon \mcal L \to \mcal L \) by defining \( \hat{\varphi}_{ij}\vert_{\f{der}(\mcal J)} = \id \), and for all \( u \in \mcal J_0 \), 
    $$
     e_i\otimes u\mapsto e_j\otimes u,\quad
     e_j\otimes u\mapsto -e_i\otimes u,\quad
 e_k\otimes u\mapsto e_k\otimes u \quad\forall k\in I\setminus\{i,j\}.
    $$
Now, it is straightforward to check that $\hat\varphi_{67}$ provides the required isomorphism in (v).
Also, for $S_8\cup E_{237}\simeq S_8\cup E_{234}$ in (vii) we consider $\hat\varphi_{74}$. 
For $S_1\cup F_{37}\simeq S_1\cup F_{34}$ we can use again the map in \eqref{eq_iso47}, as well for the two cases in (ix). 
The other case in (viii) uses a similar map to \eqref{eq_iso47} but involving the indices \( 3 \) and \( 7 \).
\medskip

Therefore,   it only remains to  prove (xi), that is, the remaining generalised nice sets  cannot give rise to isomorphic Lie algebras.   Item (iv) of 
Lemma~\ref{le_util paranoisomorfas} says that  generalised nice sets  $T$ and $T'$ which occur in different lists (1), (2), (3), (4), or (5) in Theorem \ref{teo_representantes},  
  cannot give equivalent graded contractions. We therefore consider each list separately.
  
(1) Each of these generalised nice sets have different cardinalities.
 In particular they give 7 non-isomorphic Lie algebras, by Lemma~\ref{le_util paranoisomorfas}.

(2) In this list there is nothing to check by Lemma~\ref{le_util paranoisomorfas}(vi).

(3) It is clear, because the  cardinalities of the subsets $J$ all differ.

  (4) As above, we can consider separately each case  $S_i\cup E_{J}$ for fixed $i\ne 5$. 
   If there is $f\colon \mcal L_{S_i\cup E_{J}}\to \mcal L_{S_i\cup E_{J'}}$ with 
  $f(\mcal L_{i})=\mcal L_{\sigma(i)}$  for a bijection
  $\sigma\colon I_0\to I_0$, then $\tilde\sigma(S_i)=S_i$ and $\sigma( J)=J'$. In particular,  $\sigma( K_i\cap J)= K_i\cap J'$, so that not only $|J|=|J'|$ but also $|K_i\cap J|=|K_i\cap J'|$. 
 
  \begin{itemize}
      \item[$-$] $i=1$. The two sets with $|J|=1$, give arise to  non-equivalent graded contractions $\ep^{ \{12,11 \}}$  and $\ep^{  \{12, 33\}}$, since  $ K_1\cap J=\{1,2\}\cap J$ has $1$ and $0$ elements, respectively.   There are three possible $J$'s with cardinality 2: $\{1,2\}$, $\{1,3\}$, $\{3,4\}$ so that $| K_1\cap J|=2,1,0$ respectively. Thus, again the related graded contractions $\ep^{ S_1\cup E_{J}}$ cannot be equivalent.
      Similarly, besides the set $J$ of cardinal six, we have
      
      \begin{table}[h]
          \centering
          \begin{tabular}{c||cccccccc}
              $J$ & 123 & 134 & 346 & 1234 & 3467 & 1367 & 12346 & 13467\\
              \hline 
             $(|J|,| K_1\cap J|)$  & $(3 ,2 )$ & $(3 , 1)$ & $( 3,0 )$ & $( 4,2 )$ & $( 4, 0)$ & $( 4, 1)$ & $( 5, 2)$ & $(5 ,1 )$\\
          \end{tabular}
      \end{table}
    \noindent   Hence all the related Lie algebras are non-isomorphic. \smallskip

       \item[$-$] $i=2$. In this case, the arguments from \( i=1 \) need to be slightly adapted, since 
       $K_2=\{1,2,3\}$  satisfies $| K_2\cap J|=1$ for both $J=\{1\}$ and $\{2\}$. In any case, note that the indices $1$ and $2$ play different roles, 
       since \( 1 \) appears more frequently than \( 2 \) in the elements of \( S_2 \).
      Thus, we use the property whereby $\sigma $ preserves  the subsets of indices with a fixed frequency in $S_2$. That is, $\sigma$ preserves $\{1\}$, $\{2,3\}$ and $\{4,5,6,7\}$ and hence the cardinals of their intersections with $J$. These are given by:

       \begin{table}[h]
           \centering
           \begin{tabular}{cccccccc}
               1 &  2& 4 & 12 & 14 & 23 & 24 & 47 \\
               (1,0,0) & (0,1,0) & (0,0,1) & (1,1,0) & (1,0,1) & (0,2,0) & (0,1,1) & (0,0,2) \\
               \hline
                 123 & 124  & 234 & 247 & 147 & 1234 & 1247&2347\\
                  (1,2,0) & (1,1,1)  & (0,2,1) & (0,1,2) & (1,0,2) & (1,2,1) & (1,1,2) & (0,2,2) \\
           \end{tabular}
       \end{table}
              As they are all different (different  from the last $J$ with 5 elements too), the related algebras cannot be isomorphic.
              \smallskip
              
               \item[$-$] $i=3$.   Here there are different pairs of $J$'s 
                              with the same invariant $(|J|,|K_3\cap J|)$, so that we must refine the argument.
                              This is the case for $J=12,16$, for $J=123,136$ and for $J=1234, 1346$. 
               We claim that $S_3\cup E_{12}\simeq S_3\cup E_{16}$ is not possible, since the bijection $\sigma$ related to such an isomorphism 
               would preserve  $J_3=\{5\}$ too, and so $\sigma(\{1,2\}) \neq \{1,6\}$ (recall $1*2=5\ne1*6$). 
               The cases with \( |J| \in \{3,4\} \) follow by similar arguments, using again  that, if an isomorphism existed, the corresponding \( \sigma \) would preserve both \( K_3 = \{1,2,6,7\} \) and \( \{5\} \), so that \( \sigma(\{1,2\}) \neq \{1,6\} \).
\smallskip

\item[$-$] $i=4$.   Notice that $\sigma$ must preserve $K_4=\{1,2,3,4\}$ and also $\sigma(1)=1$, which is the only index with $n_{1,S_4}=3$. Thus, the cases $J=\emptyset, 1,2,12,23,123,234,1234$, are all differentiated by their respective invariants $(|J\cap\{1\}|,|J\cap\{2,3,4\}|)=(0,0),$ $(1,0),(0,1),(1,1),(0,2),(1,2),(0,3),(1,3)$. \smallskip

\item[$-$] $i=6$. 
This case is not difficult, since the cardinalities \( |K_6 \cap J| \) for \( J = 1, 7 \) are \( 1 \) and \( 0 \);  
for \( J = 12, 17 \) they are \( 2 \) and \( 1 \); and for \( J = 126, 127 \), they are \( 3 \) and \( 2 \), respectively.\smallskip

\item[$-$] $i=7$. 
Despite the many possible choices of \( J \), the sequence \( (|J|,\, |J \cap \{1,6\}|,\, |J \cap \{2,7\}|) \) distinguishes all cases, except for \( J = 12 \) and \( 17 \) (both associated with the triple \( (2,1,1) \)), and \( J = 124 \) and \( 147 \) (both associated with \( (3,1,1) \)). Moreover, any bijection \( \sigma \) as in the previous cases would satisfy \( \tilde{\sigma}(12) \in \tilde{\sigma}(S_7) = S_7 = \{12,16,67\} \), so that  
\( \sigma(\{1,2\}) \neq \{1,7\} \). Thus, \( S_7 \cup E_{12} \not\simeq S_7 \cup E_{17} \). Similarly, \( S_7 \cup E_{124} \not\simeq S_7 \cup E_{147} \), since \( \sigma(K_7) = K_7 = \{1,2,6,7\} \). \smallskip

\item[$-$] $i=8$.   Again, any appropriate bijection \( \sigma \) must preserve \( S_8 \).  
In particular, \( \sigma(23) \neq 25 \), \( \sigma(234) \neq 235 \), and  
\( \sigma(2356) \neq 2345 \), by considering the respective quantities  
\( |J \cap \{2,5\}| \), \( |J \cap \{3,6\}| \), and \( |J \cap \{4,7\}| \),  
which should match (up to reordering).  
However, in the mentioned sets \( J \), these values are \( 110 \) vs. \( 200 \); \( 111 \) vs. \( 210 \); and \( 220 \) vs. \( 211 \), respectively.\smallskip

\item[$-$] \( i = 9 \). Taking into account the frequencies of \( 1, 2, 6 \), and \( 7 \) in \( S_9 \), a simple inspection of the cardinalities  
\( (|J|,\, |J \cap \{1\}|,\, |J \cap \{2,6\}|,\, |J \cap \{7\}|) \) reveals that all the generalised nice sets correspond to non-equivalent graded contractions.\smallskip

\item[$-$] $i=10$. This case is similar to $i=8$, in the sense that the pair of cardinals $(|J|, |J\cap\{1,2,6,7\}|)$ fails  only to distinguish $12$ from $17$, and $124$ from $147$. But $\tilde\sigma(S_{10})=S_{10}=\{12,16,27,67\}$   forces $\sigma(\{1,2\})\ne \{1,7\}.$\smallskip

\item[$-$] $i=11$. 
Any suitable bijection \( \sigma \) must preserve the sets \( \{1,2\} \) and \( \{6,7\} \), since \( n_{1,S_{11}} = n_{2,S_{11}} = 3 \) and \( n_{6,S_{11}} = n_{7,S_{11}} = 2 \).  
We can distinguish between each possible \( J \) by considering the sequence \( (|J|,\ |J \cap \{1,2\}|,\ |J \cap \{6,7\}|) \).\smallskip

\item[$-$] $i=12,13$.   Trivial, since all the possibilities for $J$ have different   cardinals.
  \end{itemize}

(5) The situation for \( S_i \cup F_J \) is much simpler, since the cardinality of \( J \) distinguishes between most cases.  
The only cases not distinguished by cardinality occur when \( S_1 = \{12\} \), and we ask whether  
\( S_1 \cup F_{125} \simeq S_1 \cup F_{347} \) and whether \( S_1 \cup F_{1235} \simeq S_1 \cup F_{3467} \).  
These cases are ruled out because any suitable \( \sigma \) must preserve \( J_1 = \{5\} \).
 \end{proof}

\section{Conclusion}

We have found a total of \(860\) Lie algebras, \(215\) for each dimension in the set \(\{52,\linebreak[1] 78,\linebreak[1] 133,\linebreak[1] 248\}\), with explicit representatives given in Theorem~\ref{teo_representantes}.  
Their properties are described in Proposition~\ref{pr_casolosSi}, Proposition~\ref{pr_losEsyFs}, Proposition~\ref{prop_elnoEniF}, Proposition~\ref{pr_casolosYiexcepciones}, and Proposition~\ref{pr_casogeneral}.  
Remarkably, these propositions apply equally to any other graded contraction of \( \mcal T(\mathcal{C}) \), since the results depend only on the support.

It may be tempting to think that, since our graded contractions are generic and can be applied to any other $\mathbb Z_2^3$-graded Lie algebra, the resulting properties and structural results might be similar in general. However, this is not the case. For instance, the situation for \( \mathfrak{g}_2 \) already differs significantly (see~\cite{draper2024gradedg2}). Moreover, the proof of Theorem~\ref{teo_representantes}   relies on the specific properties of the $G$-gradings used in this work.

Further work could explore the existence of additional non-isomorphic Lie algebras obtained by graded contractions of $\Gamma_{\mcal T(\mathcal{C}) }$, with the same support as those described in Theorem~\ref{teo_representantes}.  
It seems unlikely that \( \varepsilon^T \) is the unique graded contraction, up to equivalence, with support \( T \) (for fixed \( T \)); although for some choices of \( T \), this could indeed be the case.  
In particular, it appears that one could choose images different from \( \{0,1\} \).  
The study of 2-coboundaries can significantly reduce the number of cases to consider, often leaving only a few equivalence classes associated with each fixed support.  
However, this remains a non-trivial problem, which has been addressed, for instance, in~\cite{draper2024gradedg2}.

 That is, the work could easily be continued; in any case, it is astonishing to have achieved such a complete classification of the graded contractions of Lie algebras of dimension \( 248 \)!---even more so considering that it was done without the aid of a computer.  
Moreover, we are not aware of any other example of such an extensive collection of graded contractions for any family of Lie algebras.  
As an additional merit of this work, let us emphasize the simplicity with which the properties of the resulting algebras can be described, once the appropriate approach to study them was found.



 \section*{Acknowledgements}
F. Cuenca and C. Draper have received support from the Spanish Ministerio de Ciencia e Innovaci\'on,   grant PID2023-152673GB-I00, with FEDER funds,  and Junta de Andaluc\'{\i}a  grant  FQM-336.
T. Meyer     has received a University of Cape Town Science Faculty PhD Fellowship and the Harry Crossley Research Fellowship.

\bibliographystyle{plain}
\bibliography{ref} 
\Addresses
\end{document}